\documentclass[final,1p,times]{elsarticle}
\usepackage{amssymb}
 \usepackage{amsthm}
 \usepackage{caption}
\usepackage{amsmath,amssymb,amsopn,amsfonts,mathrsfs,amsbsy,amscd}
\usepackage{longtable}
\usepackage{multirow}
\usepackage[latin1]{inputenc}
\setlength{\parindent}{0pt}

\newcommand{\tr}{\mathrm{tr}}

\newcommand{\T}{\mathfrak{t}}
\newcommand{\R}{\mathbb{R}}

\newcommand{\G}{{\mathfrak{g}}}

\newcommand{\h}{{\mathfrak{h}}}

\newcommand{\e}{\check{e}}
\newtheorem{Def}{Definition}
\newtheorem{theo}{Theorem}
\newtheorem{pr}{Proposition}

\newtheorem{co}{Corollary}

\newtheorem{remark}{Remark}

\usepackage{adjustbox}
\usepackage{longtable}
\usepackage{geometry}
\geometry{
    left=3cm,
    right=3cm,
    top=3cm,
    bottom=3cm,
    includefoot 
}

\makeatletter
\def\ps@pprintTitle{%
   \let\@oddhead\@empty
   \let\@evenhead\@empty
   \let\@oddfoot\@empty
   \let\@evenfoot\@oddfoot}
\makeatother

\begin{document}
\begin{frontmatter}
\title{ Eight-Dimensional Symplectic Nilpotent Lie Groups with Lagrangian Normal Subgroups: A Complete Classification}
\author[]{T. A\"it Aissa, M. W. Mansouri  }
\address{Department of Mathematics, Faculty of Sciences, Ibn Tofail University\\
	Analysis, Geometry and Applications Laboratory $($LAGA$)$\\ Kenitra, Morocco\\e-mail:
	tarik.aitaissa @uit.ac.ma\\	mansourimohammed.wadia@uit.ac.ma}
\begin{abstract}
We investigate symplectic nilpotent Lie groups with Lagrangian normal subgroups. We show that there exists a bijection between the isomorphism classes of nilpotent Lie groups with Lagrangian normal subgroups and the isomorphism classes  of geodesically complete, flat, nilpotent Lie groups with  Lagrangian extension cohomology class. Finally, we provide a complete classification of eight-dimensional symplectic nilpotent Lie groups with Lagrangian normal subgroups,  identifying exactly ninety-five such groups. As a consequence, we obtain a complete classification of eight-dimensional symplectic filiform real Lie groups.
\end{abstract}

\begin{keyword}
 Symplectic Lie groups, Flat Lie algebras, Flat torsion-free connection.\\
        \MSC  22E25, 17B30, 53B05
\end{keyword}
        \end{frontmatter}
\section{Introduction}\label{sec1}
 Let $G$ be a connected and simply connected Lie group with Lie algebra $\G$, which we identify with the space of left-invariant vector fields on $G$. Denote by $\mathfrak{g}^\ast$ the dual vector space of $\mathfrak{g}$, 
regarded as the space of left-invariant one-forms on $G$. Let $TG$ be the tangent bundle of $G$ and let $T^*G$ denote its cotangent bundle. For a differential one-form $\mu$ on $G$, we denote its value at a point $g \in G$ by
\begin{align*}
\mu_g \in T_g^*G = (T_g G)^*.
\end{align*}
Henceforward, the group $G$ is identified with the zero section in $T^\ast G$, and $\G^\ast$ is identified with the fiber over the neutral element $e$ of $G$ in $T^\ast G$. Therefore, we will identify $ T_{(e,0)}T^*G$ with $\G \times \G^\ast$. Let $\Omega$ denote the canonical skew-symmetric bilinear form on $\G \times \G^\ast$, defined explicitly by the relation
\begin{align}\label{Canoform}
\Omega\big((x,\alpha),(y,\beta) \big) = \alpha(y)-\beta(x)
\end{align}
for all $(x,\alpha),(y,\beta) \in \G\times \G^\ast $. Let us endow $\G^\ast$ with its structure of a vector Lie group. $T^\ast G$ carries a Lie group structure whose composition law is given as:
\begin{align}\label{loi}
(X,\alpha)\cdot(Y,\beta)=\big(X Y,\mathrm{L}^\ast_{X^{-1}}\beta+\mathrm{R}^\ast_{Y^{-1}}\alpha\big)
\end{align}
In the group structure defined by equation $(\ref{loi})$, the maps $\mathrm{L}_X$ and $\mathrm{R}_Y$ represent left and right translation by $X$ and $Y$, respectively. A classical result states that the 2-form $\Omega$ is left-invariant and closed with respect to this Lie group structure $(\ref{loi})$ if and only if $G$ is abelian. This naturally leads to the so-called \textit{cotangent extension problem} \cite{Boyom} does there exist a Lie group structure on $T^\ast G$ for which the left-invariant two-form induced by $\Omega$ is closed?

 More precisely, let $G$, $\G$, $\G^\ast$, and $\Omega$ be as defined above. The problem is to find all Lie group structures on $T^\ast G$ (or equivalently, all Lie algebra structures on $\G \oplus \G^\ast$) which satisfy the following conditions:
\begin{enumerate}
    \item[$(i)$] The natural sequence of Lie groups
 \[
          0 \hookrightarrow \G^\ast \hookrightarrow T^\ast G\longrightarrow \G\longrightarrow 1
          \]
 is exact, where $\G^\ast$ is endowed with its vector group  structure.
    \item[$(ii)$] The left-invariant 2-form on $T^\ast G$ induced by $\Omega$ is closed (i.e., it is a left-invariant symplectic form). 
\end{enumerate}
The total space of the cotangent bundle $T^\ast G$ of a Lie group $G$, equipped with its canonical symplectic form $\Omega$, is a symplectic manifold. In this setting, the subgroup $\G$ is Lagrangian, and $\G^\ast$ is a normal Lagrangian subgroup. A Lie algebra resulting from this construction is called a solution to the cotangent extension problem. Boyom $\cite{Boyom}$ used this method to construct models of symplectic Lie algebras.

Solutions to the cotangent extension problem on $T^\ast G$ are characterized by certain invariants of Koszul-Vinberg structures on the Lie algebra $\G$ of $G$. Recall that a Koszul-Vinberg structure on $\G$ (or on $G$) is a bilinear product
\begin{align*}
\G\times\G\longrightarrow\G
\end{align*}
which satisfy the following axioms:
\begin{enumerate}
\item[$KV_1$] $x\cdot y-y\cdot x=[x,y]_\G$,
\item[$KV_2$] $(x,y,z)=(y,x,z)$,
\end{enumerate}
where, $(x,y,z)$ is the associator
\begin{align*}
(x,y,z)=(x\cdot y)\cdot z-x\cdot(y\cdot z).
\end{align*}
There is a one-to-one correspondence between the set $\mathrm{KV}(\G)$ of Koszul-Vinberg structures on $\G$ and the set of left-invariant affine structures on the Lie group $G$. Koszul-Vinberg structures are known under many different names, such as flat Lie algebras, pre-Lie algebras, and left-symmetric algebras (LSA for short). For more details on left-symmetric algebras, we refer the reader to the survey article~\cite{Burd2} and the references therein.

The cotangent extension problem described above is closely related to the problem of existence for left-invariant symplectic 2-forms on  Lie groups. The following are two extreme cases in which this question can be answered easily:
\begin{enumerate}
\item If the Lie group $G$ is commutative, the Lie group structure defined by $(\ref{loi})$ provides a solution to the cotangent extension problem for $G$.
\item The cotangent extension admits no solution for a semisimple Lie group $G$, as shown by an argument due to Y. Matsushima \cite{Matsu}. This argument relies on the vanishing of the first Lie algebra cohomology for finite-dimensional representations of semisimple Lie algebras.
\end{enumerate}

 The complete solution to the cotangent extension problem is provided by Baues and Cort\'es in~\cite{Bause}, see, for instance, Theorem~$\ref{corresp}$.

The discussion now proceeds to the concept of a \emph{flat affine Lie group}, or equivalently, a \emph{flat Lie algebra}. Consider a simply connected Lie group $ G $ with its associated Lie algebra $\G$.  A \emph{connection} on a Lie algebra $\G$ is a bilinear map $\nabla : \G \times \G \to \G$, denoted $(x, y) \mapsto \nabla_x y$. Its \textit{torsion} $\mathrm{T}^\nabla$ and  $\mathrm{R}^\nabla$ are the bilinear and trilinear maps defined respectively for all $x, y, z \in \G$ by:
\begin{align*}
\mathrm{T}^\nabla(x, y) &= \nabla_x y - \nabla_y x - [x, y]_\G, \\
\mathrm{R}^\nabla(x, y)z &= \nabla_x \nabla_y z - \nabla_y \nabla_x z - \nabla_{[x, y]_\G} z.
\end{align*}
The connection $\nabla$ is said to be \textit{torsion-free} if $\mathrm{T}^\nabla = 0$. The curvature $\mathrm{R}^\nabla$ vanishes identically if and only if the induced map $\rho^\nabla : \G \to \mathrm{End}(\G)$, given by $x \mapsto \nabla_x$, is a representation of $\G$ on itself. In this case, $\nabla$ is called a \textit{flat connection}. A flat connection $\nabla$ is torsion-free if and only if the identity map $\mathrm{Id} : \G \to \G$ is a 1-cocycle in the cohomology space $Z^1_{\rho^\nabla}(\G, \G)$; that is, $\mathrm{Id} \in Z^1_{\rho^\nabla}(\G, \G)$. A \textit{flat Lie algebra} is a pair $(\G, \nabla)$ consisting of a Lie algebra $\G$ equipped with a connection $\nabla$ that is both torsion-free and flat.

It is known that every perfect Lie algebra $\G$ over a field $\mathbb{K}$ (i.e., one satisfying $[\G, \G] = \G$) admits no flat, torsion-free connection~(\cite{Hel}, p.31). 
In contrast, symplectic Lie algebras are naturally equipped with a flat, torsion-free connection, which makes them examples of flat Lie algebras. It is known~\cite{Chu,DM,MR} that for a symplectic Lie algebra $(\G, \omega)$, the bilinear map $\nabla$ defined implicitly by the formula
\begin{equation}\label{sympconnection}
    \omega(\nabla^\omega_x y, z) = -\omega(y, [x, z]) \quad \textit{for all }\, x, y, z \in \G
\end{equation}
yields a flat, torsion-free connection on $\G$.

Let $(\G, \nabla)$ be a flat Lie algebra. Set $\nabla_x y = x \cdot y$ for all $x, y \in \G$. Let $\varrho_x$ denote the right-multiplication operator by $x$ in the Koszul-Vinberg algebra $(\G, \cdot)$. We will finish this introduction with some results on the completeness of left-invariant flat affine connections. It is well-known that a left-invariant flat affine connection $\nabla$ is geodesically complete if and only if $\tr(\varrho_x) = 0$ for all $x \in \G$ (see~\cite{Hel}). For the proofs of the following theorems, we refer to~\cite{Hosh}. 
\begin{theo}(Scheuneman)\label{theo1} Let $(\h,\nabla)$ be a flat Lie algebra. Then,
 if $(\h,\nabla)$ is a complete flat Lie algebra whose associated
Lie algebra is nilpotent, then the endomorphism $\nabla_x$ is nilpotent for all $x\in\h$.
\end{theo}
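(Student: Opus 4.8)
The plan is to combine the flatness of $\nabla$ (which, as recalled in the excerpt, makes $x\mapsto\nabla_x$ a representation of $\h$ on itself) with the torsion-free and completeness hypotheses reformulated through traces, and then to run a weight-space analysis of this representation. Write $x\cdot y=\nabla_x y$ and let $\varrho_x$ be the right multiplication. Torsion-freeness gives the operator identity $\nabla_x=\ad_x+\varrho_x$, since $(\nabla_x-\varrho_x)y=x\cdot y-y\cdot x=[x,y]$, while completeness gives $\tr(\varrho_x)=0$ for all $x$. Because $\h$ is nilpotent, every $\ad_x$ is nilpotent, so $\tr(\ad_x)=0$ and hence $\tr(\nabla_x)=0$ for all $x$. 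The goal is then to upgrade this vanishing of traces to the nilpotency of each $\nabla_x$.

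After complexifying I would decompose $\h_{\mathbb{C}}=\bigoplus_\lambda\h_\lambda$ into the generalized weight spaces of the representation $\nabla$, where $\h_\lambda=\{v:(\nabla_x-\lambda(x))^{N}v=0,\ \forall x\}$ for $N\gg0$, so that on $\h_\lambda$ each $\nabla_x$ is $\lambda(x)\mathrm{Id}$ plus a nilpotent operator; then every $\nabla_x$ is nilpotent precisely when every weight $\lambda$ vanishes. The first structural input I would establish is that the multiplication $m:\h_{\mathbb{C}}\otimes\h_{\mathbb{C}}\to\h_{\mathbb{C}}$, $a\otimes b\mapsto a\cdot b$, is a morphism of $\h$-modules from $(\ad\otimes\nabla)$ to $(\nabla)$; this is exactly the left-symmetric identity written as $\nabla_x(a\cdot b)=[x,a]\cdot b+a\cdot(\nabla_x b)$. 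Since $\ad$ is nilpotent its only weight is $0$, so the weight-$\lambda$ space of $\ad\otimes\nabla$ is $\h_{\mathbb{C}}\otimes\h_\lambda$, and as module morphisms preserve generalized weight spaces I obtain $a\cdot b\in\h_\lambda$ whenever $b\in\h_\lambda$. In words: every product lies in the weight space of its right-hand factor, each $\h_\lambda$ is a left ideal, and $\varrho_x$ has image in $\h_\lambda$ when $x\in\h_\lambda$.

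With this in hand I would treat two cases. For $x\in\h_\lambda$ the operators $\nabla_x,\varrho_x,\ad_x$ all preserve $\h_\lambda$; since $\varrho_x$ has image in $\h_\lambda$ we get $\tr(\varrho_x|_{\h_\lambda})=\tr(\varrho_x)=0$, while $\ad_x|_{\h_\lambda}$ is nilpotent, so $\tr(\nabla_x|_{\h_\lambda})=\lambda(x)\dim\h_\lambda=0$ and hence $\lambda(x)=0$. For $x\in\h_\nu$ with $\nu\neq\lambda$ the point is to show $\nabla_x|_{\h_\lambda}$ is nilpotent. Here I would use that for $a\in\h_\lambda$ the product $a\cdot x$ lies in $\h_\nu$ while $x\cdot a$ lies in $\h_\lambda$, so $\ad_x a=\nabla_x a-a\cdot x$ has $\h_\lambda$-component exactly $\nabla_x a$; an induction then shows the $\h_\lambda$-component of $\ad_x^{k}a$ equals $\nabla_x^{k}a$ for every $k$, and the nilpotency of $\ad_x$ forces $\nabla_x^{K}a=0$, so $\nabla_x|_{\h_\lambda}$ is nilpotent and $\lambda(x)=0$. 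As $\h_{\mathbb{C}}=\bigoplus_\nu\h_\nu$, the two cases give $\lambda\equiv0$ for every weight, whence every $\nabla_x$ is nilpotent.

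The main obstacle I anticipate is exactly the off-diagonal case $\nu\neq\lambda$: the trace argument that settles $x\in\h_\lambda$ is useless there because $\ad_x$ no longer preserves $\h_\lambda$, and a naive trace computation only reproduces identities already implied by $\tr(\nabla_x)=0$. The device that breaks this circularity is the fact that products remember only the weight of their right factor, which confines the leakage of $\ad_x^{k}a$ outside $\h_\lambda$ entirely to $\h_\nu$ and thereby transfers the nilpotency of $\ad_x$ to $\nabla_x|_{\h_\lambda}$. Verifying the module-morphism property of $m$ and this single induction are the only genuine computations; everything else is bookkeeping with the weight decomposition.
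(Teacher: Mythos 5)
The paper contains no internal proof of Theorem~\ref{theo1}: it is stated as a quoted result (Scheuneman) and the reader is referred to Kim's paper \cite{Hosh}, so there is no in-paper argument to compare yours against. Judged on its own merits, your proof is correct, and it is a self-contained reconstruction in the classical spirit of the Scheuneman--Kim argument (weight-space decomposition of the left-symmetric module plus the trace characterization of completeness, which the paper itself records as $\tr(\varrho_x)=0$). The two load-bearing steps both check out. First, your ``module morphism'' identity $\nabla_x(a\cdot b)=[x,a]\cdot b+a\cdot(\nabla_x b)$ is exactly the vanishing-curvature relation $[\nabla_x,\nabla_a]=\nabla_{[x,a]}$ applied to $b$ (you call it the left-symmetric identity; given torsion-freeness the two are equivalent), and combined with the standard facts that weights of a nilpotent Lie algebra add under tensor products and that module morphisms preserve generalized weight spaces, it does yield $\varrho_x(\h_{\mathbb{C}})\subseteq\h_\nu$ for $x\in\h_\nu$. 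Second, in the diagonal case, $\mathrm{im}(\varrho_x)\subseteq\h_\lambda$ really does give $\tr(\varrho_x|_{\h_\lambda})=\tr(\varrho_x)=0$, and $\ad_x=\nabla_x-\varrho_x$ preserves $\h_\lambda$, so $\lambda(x)\dim\h_\lambda=0$; in the off-diagonal case, writing $\ad_x^k a=\nabla_x^k a+u_k$ with $u_k\in\bigoplus_{\mu\neq\lambda}\h_\mu$, the inductive step uses only that $\nabla_x$ preserves each weight space and that $\varrho_x$ sends everything into $\h_\nu$ with $\nu\neq\lambda$, so the leakage never re-enters $\h_\lambda$; then $\ad_x^K=0$ forces $\nabla_x^K|_{\h_\lambda}=0$ because $\h_\lambda\cap\bigoplus_{\mu\neq\lambda}\h_\mu=0$, and a nilpotent operator of the form $\lambda(x)\mathrm{Id}+N$ has $\lambda(x)=0$. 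Summing over the decomposition and using linearity of the weights gives $\lambda\equiv 0$, as you say.

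Two small points of hygiene, neither a gap. You should cite the standard package of facts you invoke for modules over nilpotent Lie algebras over an algebraically closed field (existence of the generalized weight decomposition, each $\h_\lambda$ a submodule, weights linear, additivity under tensor, preservation under morphisms), e.g.\ Jacobson or Bourbaki -- your complexification is exactly what makes these available. And you silently extend the hypotheses to $\h_{\mathbb{C}}$: $\tr(\varrho_x)=0$ extends by $\mathbb{C}$-linearity of $x\mapsto\varrho_x$, and $\ad_x$ is nilpotent for all $x\in\h_{\mathbb{C}}$ because the complexification of a nilpotent Lie algebra is nilpotent of the same class; one sentence each would make the writeup airtight.
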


\begin{theo}\label{theo2} Let $(\h,\nabla)$ be a flat Lie algebra.  If $\nabla_x$ is nilpotent for all $x\in\h$, then its associated Lie algebra is nilpotent and the right
multiplication $\varrho_x$ is nilpotent for all $x\in\h$.
\end{theo}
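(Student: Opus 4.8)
The plan is to read $(\h,\nabla)$ as a left-symmetric (pre-Lie) algebra by setting $x\cdot y=\nabla_x y$, so that $\nabla_x$ is the left multiplication, $\varrho_x$ the right multiplication, and $\ad_x=\nabla_x-\varrho_x$ (since $[x,y]=x\cdot y-y\cdot x$). Flatness of $\nabla$ is precisely the statement that $\nabla:\h\to\mathrm{End}(\h)$, $x\mapsto\nabla_x$, is a representation, i.e. $\nabla_{[x,y]}=[\nabla_x,\nabla_y]$. I would then establish the two conclusions in order: first nilpotency of the Lie algebra $\h$, then nilpotency of each $\varrho_x$, the second using the first.

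For the Lie-algebra nilpotency I would apply the linear form of Engel's theorem to $\nabla(\h)\subseteq\mathrm{End}(\h)$: by hypothesis it is a Lie subalgebra of nilpotent endomorphisms, hence a nilpotent Lie algebra. Consider $\mathfrak{k}=\ker\nabla=\{x:\nabla_x=0\}$. As $\nabla$ is a homomorphism, $\mathfrak{k}$ is an ideal, and it is abelian since $x,y\in\mathfrak{k}$ give $[x,y]=\nabla_x y-\nabla_y x=0$; moreover $\h/\mathfrak{k}\cong\nabla(\h)$. I would then show each $\ad_x$ is nilpotent by a sub/quotient argument: on the invariant subspace $\mathfrak{k}$ one has $\ad_x|_{\mathfrak{k}}=\nabla_x|_{\mathfrak{k}}$ (because $v\in\mathfrak{k}$ forces $v\cdot x=0$), which is nilpotent; on the quotient $\h/\mathfrak{k}\cong\nabla(\h)$ the induced map is the inner derivation $\ad_{\nabla_x}$ of the nilpotent Lie algebra $\nabla(\h)$, again nilpotent. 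Since an operator that is nilpotent on an invariant subspace and on the associated quotient is nilpotent, $\ad_x$ is nilpotent for every $x$, and the abstract form of Engel's theorem gives that $\h$ is nilpotent.

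The main obstacle is the second conclusion, because $\varrho_x=\nabla_x-\ad_x$ is a difference of two nilpotent operators, which need not be nilpotent by any soft argument; in particular the completeness condition $\tr(\varrho_x)=0$ (immediate from $\tr(\nabla_x)=\tr(\ad_x)=0$) is much weaker than nilpotency. I would instead prove $\tr(\varrho_x^{\,k})=0$ for all $k\geq1$, which over $\R$ forces $\varrho_x$ to be nilpotent. The engine is the left-symmetric identity rewritten as $\varrho_{z\cdot x}=\varrho_x\varrho_z+[\nabla_z,\varrho_x]$. Multiplying by $\varrho_x^{\,m-1}$ and taking traces, two applications of cyclicity kill the commutator contribution $\tr([\nabla_z,\varrho_x]\varrho_x^{\,m-1})$ (here $\varrho_x^{\,m-1}$ commutes with $\varrho_x$), yielding the reduction $\tr(\varrho_z\,\varrho_x^{\,m})=\tr(\varrho_{z\cdot x}\,\varrho_x^{\,m-1})$. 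Iterating $m$ times collapses the trace to $\tr(\varrho_w)$ for a single $w=\varrho_x^{\,m}(z)$, and $\tr(\varrho_w)=\tr(\nabla_w)-\tr(\ad_w)=0$ by nilpotency of $\nabla_w$ and of the now-nilpotent $\h$. Taking $z=x$ gives $\tr(\varrho_x^{\,k})=0$ for all $k$, whence $\varrho_x$ is nilpotent. The only point requiring care is verifying the vanishing of the commutator term and checking that the induction on $m$ terminates at a single right multiplication; everything else is a direct application of Engel's theorem in its linear and abstract forms.
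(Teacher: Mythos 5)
Your proof is correct, and it is worth noting at the outset that the paper itself contains no proof of this theorem: it is quoted from Kim's work on complete left-invariant affine structures on nilpotent Lie groups (the reference \cite{Hosh}), so your argument is a self-contained substitute rather than a variant of an internal proof. Both halves check out. For the Lie nilpotency, the sub/quotient analysis along $\mathfrak{k}=\ker\nabla$ is sound: $\mathfrak{k}$ is an ideal, hence $\ad_x$-invariant, and for $v\in\mathfrak{k}$ an easy induction gives $(\ad_x)^k v=\nabla_x^k v$ (each iterate stays in $\mathfrak{k}$), so $\ad_x|_{\mathfrak{k}}$ is nilpotent; on $\h/\mathfrak{k}\cong\nabla(\h)$ the induced operator is $\ad_{\nabla_x}$ computed inside the subalgebra $\nabla(\h)\subseteq\mathrm{End}(\h)$, which is a nilpotent Lie algebra by the linear Engel theorem, so that operator is nilpotent too; the sub/quotient principle and abstract Engel then give nilpotency of $\h$. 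For the second half, your operator identity $\varrho_{z\cdot x}=\varrho_x\varrho_z+[\nabla_z,\varrho_x]$ is a correct rewriting of the left-symmetric identity $(a,z,x)=(z,a,x)$ evaluated at $a$; the commutator trace $\tr\bigl([\nabla_z,\varrho_x]\varrho_x^{m-1}\bigr)$ does vanish since $\varrho_x^{m-1}$ commutes with $\varrho_x$, yielding $\tr(\varrho_z\varrho_x^m)=\tr(\varrho_{z\cdot x}\varrho_x^{m-1})$ and, after $m$ iterations (using $z\cdot x=\varrho_x(z)$), $\tr(\varrho_z\varrho_x^m)=\tr(\varrho_{\varrho_x^m(z)})$. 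The final vanishing $\tr(\varrho_w)=\tr(\nabla_w)-\tr(\ad_w)=0$ legitimately invokes the already-established nilpotency of $\h$, so your ordering of the two conclusions is essential and correct; with $z=x$ you get $\tr(\varrho_x^k)=0$ for all $k\geq 1$, which forces nilpotency of $\varrho_x$ by Newton's identities --- a step that uses characteristic zero, consistent with the paper's standing conventions. As a point of comparison, your identity specializes at $m=1$ to $\tr\varrho_{z\cdot x}=\tr(\varrho_x\varrho_z)$, which is exactly the trace form underlying the Helmstetter--Kim treatment of completeness cited by the paper, so your route is in the same spirit as the external source while being shorter and entirely self-contained; the only inessential element is the remark that $\mathfrak{k}$ is abelian, which your argument never uses.
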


This paper follows the following structure. In Section~$\ref{sec2}$, we introduce the fundamental properties of symplectic Lagrangian reduction and derive some preliminary results. Section~\ref{sec3} investigates symplectic nilpotent Lie algebras with a Lagrangian ideal. We establish a one-to-one correspondence between such algebras and geodesically complete  flat Lie algebras characterized by the Lagrangian extension cohomology group (Corollary~\ref{one-to-one}). Furthermore, we provide a complete classification of eight-dimensional symplectic nilpotent Lie algebras that admit a Lagrangian ideal (Proposition~\ref{Symbracket}, Table~$\ref{Symbracket}$), including their symplectic forms (Proposition~\ref{Symform}, Table~$\ref{Symform}$). As a consequence, we also obtain a complete classification of eight-dimensional symplectic filiform real Lie algebras (Theorem~$\ref{Filifrom}$, Table~$\ref{SympFilifrom}$).\\\\
\textbf{Notation and conventions.} For our results on symplectic Lie algebras, we work over a fixed field $\mathbb{K}$ of characteristic zero, unless stated otherwise. Global geometric interpretations for simply connected Lie groups are naturally given over the field $\mathbb{K} = \R$ of real numbers, which is therefore of principal interest in our investigations. Let $\G$ be a Lie algebra with basis $\{e_i\}_{i=1}^n$, and let $\{e^i\}_{i=1}^n$ denote the corresponding dual basis in $\G^\ast$. We denote by $e^{ij}$ the $2$-form $e^i \wedge e^j \in \bigwedge^2 \G^\ast$. By $[\alpha]$ (respectively, $[\alpha]_{L}$), we denote the cohomology class of $\alpha \in Z^2_\rho(\h, \h^\ast)$ in the group \(H^2_\rho(\h, \h^\ast)$ (respectively, in the Lagrangian extension cohomology group $H^2_{L,\rho}(\h, \h^\ast)$).

\section{Lagrangian extension of flat   Lie algebras}\label{sec2}
In this section, we summarize key definitions and results concerning the Lagrangian extension of flat Lie algebras, as established in \cite{Bause}. We then present an adaptation of these results to the nilpotent case, which leads to our first main result.

At first, we define symplectic Lie algebras.
\begin{Def}
A symplectic Lie algebra $(\G, \omega)$ is a Lie algebra $\G$ endowed with a non-degenerate, skew-symmetric bilinear form $\omega : \G \times \G \to \R$ that is closed. This closure condition means $\mathrm{d}\omega = 0$, where $\mathrm{d}$ denotes the Chevalley-Eilenberg differential. Explicitly, for all $x, y, z \in \G$, the following equation holds$:$
\begin{align*}
\mathrm{d}\omega(x, y, z) = \omega(x, [y, z]) + \omega(y, [z, x]) + \omega(z, [x, y]) = 0.
\end{align*}
\end{Def}
 A symplectic Lie algebra is one-to-one with a simply connected Lie group with left-invariant forms. An isomorphism between two symplectic Lie algebras $(\G_1, \omega_1)$ and $(\G_2, \omega_2)$ is a Lie algebra isomorphism $\varphi : \G_1 \to \G_2$ that preserves the symplectic forms, i.e., $\varphi^*\omega_2 = \omega_1$. For low-dimensional symplectic Lie algebras, classifications have been established through several approaches: for the general case \cite{Co1,Ova}; for the nilpotent case \cite{Burd1,Goze,KGM} (see also \cite{Fis}); and for the nonsolvable case-which includes exact symplectic Lie algebras \cite{AM,Co2}.

\subsection{Symplectic reduction}
Given a symplectic Lie algebra $(\G, \omega)$, an ideal $\mathfrak{j}$ of $\G$ is called  \textit{isotropic} if it is an isotropic subspace with respect to the symplectic form, i.e., $\omega(x, y) = 0$ for all $x, y \in \mathfrak{j}$. If, in addition to being isotropic, the symplectic orthogonal $\mathfrak{j}^{\perp_\omega}$ is also an ideal in $\G$, then $\mathfrak{j}$ is called a \textit{normal isotropic} ideal. Lastly, an isotropic ideal is said to be \textit{Lagrangian} if it is a maximum isotropic subspace.

Let $(\G, \omega)$ be a symplectic Lie algebra with an isotropic ideal $\mathfrak{j}$. The symplectic orthogonal $\mathfrak{j}^{\perp_\omega}$ is a subalgebra containing $\mathfrak{j}$, and consequently, $\omega$ induces a symplectic form $\overline{\omega}$ on the quotient Lie algebra
\begin{align*}
\overline{\G}=\mathfrak{j}^{\perp_\omega}/\mathfrak{j}.
\end{align*}
The symplectic Lie algebra $(\overline{\G},\overline{\omega})$ is defined as the \textit{symplectic reduction} of $(\G, \omega)$ with respect to the isotropic ideal $\mathfrak{j}$. An isotropic ideal $\mathfrak{j}$ is termed normal if $\mathfrak{j}^{\perp_\omega}$ is an ideal in $\G$. In this case, the symplectic reduction is also called normal. Furthermore, this reduction is known as a \textit{Lagrangian reduction} if the ideal $\mathfrak{j}$ is Lagrangian.

Let $\mathfrak{j}$ be a normal  ideal in the symplectic Lie algebra $(\G,\omega)$ and let  
\begin{align*}
\h=\G/\mathfrak{j}^{\perp_\omega}
\end{align*}
be the induced quotient Lie algebra. The symplectic form $\omega$ induces a non-degenerate bilinear pairing $\omega_{\h}$ between $\h$ and $\mathfrak{j}$, defined by
\begin{align}
\omega_{\h}(\bar{x}, u) = \omega(x, u), \quad \text{for all } \bar{x} \in \h, u \in \mathfrak{j},
\end{align}
where for any $x \in \G$, we denote its equivalence class in $\h$ by $\bar{x}$. We now recall the following key result:
\begin{pr}\textsc{\cite{Bause}}\label{Flatindeuced}
Let $\mathfrak{j}$ be a normal ideal in $(\G,\omega)$. Then the following hold:
\begin{enumerate}
\item  The homomorphism $\omega_\h \in\mathrm{Hom}(\h,\mathfrak{j}^\ast)$, $\overline{x} \mapsto\omega_\h(\overline{x},\cdot)$, is an isomorphism $\h \longrightarrow
\mathfrak{j}^\ast$.
\item The Lie algebra $\h=\G/\mathfrak{j}^{\perp_\omega}$ carries a torsion-free flat connection $\overline{\nabla}=\nabla^\omega$ which
is defined by the equation
\begin{align}
\omega_\h\big(\overline{\nabla}_{\overline{x}}\overline{y},u\big)=-\omega\big(y,[x,u]_\G\big),\quad\text{for all } \overline{x},\overline{y}\in\h,~~u\in\mathfrak{j}.
\end{align}
\end{enumerate}
\end{pr}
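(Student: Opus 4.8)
The plan is to establish the two assertions in turn, letting the closedness $\mathrm{d}\omega=0$ together with the two ideal hypotheses do all the work. For the first assertion I would begin by checking that $\omega_\h(\bar{x},u)=\omega(x,u)$ does not depend on the chosen representative $x$ of $\bar{x}\in\h$: replacing $x$ by $x+v$ with $v\in\mathfrak{j}^{\perp_\omega}$ changes the value by $\omega(v,u)$, which vanishes precisely because $u\in\mathfrak{j}$ and $v\in\mathfrak{j}^{\perp_\omega}$. The induced map $\h\to\mathfrak{j}^\ast$ is injective, since $\omega_\h(\bar{x},\cdot)=0$ forces $\omega(x,u)=0$ for all $u\in\mathfrak{j}$, i.e. $x\in\mathfrak{j}^{\perp_\omega}$, i.e. $\bar{x}=0$. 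I would then close the argument with a dimension count: non-degeneracy of $\omega$ gives $\dim\mathfrak{j}^{\perp_\omega}=\dim\G-\dim\mathfrak{j}$, whence $\dim\h=\dim\mathfrak{j}=\dim\mathfrak{j}^\ast$, so the injection is an isomorphism.

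For the second assertion the crucial preliminary — and the step I expect to be the real obstacle — is to show that the defining equation produces a genuinely well-defined element of $\h$. For fixed $\bar{x},\bar{y}$ the assignment $u\mapsto-\omega(y,[x,u]_\G)$ is a functional on $\mathfrak{j}$ (here $[x,u]_\G\in\mathfrak{j}$ since $\mathfrak{j}$ is an ideal), so by the first assertion there is a unique $\overline{\nabla}_{\bar{x}}\bar{y}\in\h$ realizing it. Independence of the representative $y$ is immediate as before. Independence of $x$ is the delicate point: replacing $x$ by $x+v$ with $v\in\mathfrak{j}^{\perp_\omega}$ adds the term $-\omega(y,[v,u]_\G)$, which I would kill by expanding $\mathrm{d}\omega(y,v,u)=0$. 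In that cocycle identity the summand $\omega(v,[u,y]_\G)$ vanishes because $[u,y]_\G\in\mathfrak{j}$, while $\omega(u,[y,v]_\G)$ vanishes because $[y,v]_\G\in\mathfrak{j}^{\perp_\omega}$ — this is exactly where normality of $\mathfrak{j}$ is used — so $\omega(y,[v,u]_\G)=0$ for every $y\in\G$, and non-degeneracy even upgrades this to the identity $[\mathfrak{j}^{\perp_\omega},\mathfrak{j}]=0$, which will streamline the flatness step.

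With well-definedness secured, torsion-freeness is a one-line computation: pairing $\overline{\nabla}_{\bar{x}}\bar{y}-\overline{\nabla}_{\bar{y}}\bar{x}-[\bar{x},\bar{y}]_\h$ against $u\in\mathfrak{j}$ yields $\omega(x,[y,u]_\G)-\omega(y,[x,u]_\G)-\omega([x,y]_\G,u)$, which is exactly $\mathrm{d}\omega(x,y,u)=0$; injectivity of $\omega_\h$ then forces the torsion to vanish. For flatness I would avoid the raw curvature computation and instead observe that, under the isomorphism $\Phi=\omega_\h:\h\to\mathfrak{j}^\ast$ of the first assertion, the defining equation reads $\Phi(\overline{\nabla}_{\bar{x}}\bar{y})=\sigma^\ast_x\,\Phi(\bar{y})$, where $\sigma_x=\ad_x|_{\mathfrak{j}}$ is the adjoint action of $\G$ on the ideal $\mathfrak{j}$ and $\sigma^\ast$ is its dual representation on $\mathfrak{j}^\ast$. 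Hence $\overline{\nabla}_{\bar{x}}=\Phi^{-1}\circ\sigma^\ast_x\circ\Phi$, so $x\mapsto\overline{\nabla}_{\bar{x}}$ is a representation of $\G$ on $\h$; it annihilates $\mathfrak{j}^{\perp_\omega}$ because $[\mathfrak{j}^{\perp_\omega},\mathfrak{j}]=0$, and therefore descends to a representation $\rho^{\overline{\nabla}}:\h\to\mathrm{End}(\h)$, $\bar{x}\mapsto\overline{\nabla}_{\bar{x}}$. As recalled in the introduction, $\mathrm{R}^{\overline{\nabla}}\equiv 0$ is equivalent to $\rho^{\overline{\nabla}}$ being a representation, which is precisely what has been shown, so $\overline{\nabla}$ is the desired torsion-free flat connection and the proof is complete.
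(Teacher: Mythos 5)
Your proof is correct and complete: the well-definedness checks, the key identity $[\mathfrak{j}^{\perp_\omega},\mathfrak{j}]=0$ extracted from $\mathrm{d}\omega=0$ together with normality of $\mathfrak{j}$, the torsion computation via $\mathrm{d}\omega(x,y,u)=0$, and the flatness argument through $\overline{\nabla}_{\overline{x}}=\omega_\h^{-1}\circ\sigma_x^\ast\circ\omega_\h$ with $\sigma_x=\ad_x|_{\mathfrak{j}}$ (using the same dual-representation sign convention as in the paper's equation for $\rho$) are all sound. The paper itself states this proposition without proof, citing Baues--Cort\'es, and your argument is essentially the standard one given in that reference, so there is nothing to correct.
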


\subsection{Lagrangian extension of flat  Lie algebras}
  In this section, we briefly outline the theory of Lagrangian extensions of flat Lie algebras and their relation to Lagrangian reduction \cite{Bause}. On the infinitesimal level, Lagrangian extensions generalize the structure of symplectic cotangent Lie groups.

 Let $(\h, \nabla)$ be a flat Lie algebra; that is, a Lie algebra $\h$ equipped with a flat, torsion-free connection $\nabla$.  Given that $\nabla$ is a flat connection, the map $x \mapsto \nabla_x$ defines a representation $\h \to \mathrm{End}(\h)$. We denote by $\rho : \h \to \mathrm{End}(\h^\ast)$ the dual representation, which satisfies
\begin{align}\label{dualrepre}
\rho(x)\xi:=-\nabla_x^\ast\xi=-\xi\circ\nabla_x,\quad x\in\h,~~\xi\in\h^\ast.
\end{align}
Define $Z^2_\nabla(\h,\h^\ast)=Z^2_\rho(\h,\h^\ast)$. Each such cocycle $\alpha\in Z^2_\rho(\h,\h^\ast)$ induces a Lie algebra extension
\begin{align*}
0\longrightarrow\h^\ast\longrightarrow\G_{\nabla,\alpha}\longrightarrow\h\longrightarrow 0,
\end{align*}
where the Lie algebra structure on the the vector space $\mathfrak{h}\oplus \mathfrak{h}^*$ is given by the following relations:
\begin{align}
[x,y]&=[x,y]_\h+\alpha(x,y),&\text{for all } x,y\in\h,\label{Bra1}\\
[x,\xi]&=\rho(x)\xi,&\text{for all } x,y\in\h,~~\xi\in\h^\ast.\label{Bra2}
\end{align}
Let $\omega$ be the non-degenerate, alternating two-form on $\G$ defined by the dual pairing of $\h$ and $\h^\ast$, which implies that both $\h$ and $\h^\ast$ are $\omega$-isotropic. Explicitly, for all $x \in \h$ and $\xi \in \h^\ast$, we have $\omega(\xi, x) = -\omega(x, \xi) = \xi(x)$.  Imposing the requirement that $\omega$ be closed on $\G_{\nabla,\alpha}$ leads to the following verifiable condition:
\begin{pr}\textsc{\cite{Bause}}
The form $\omega$ is symplectic for the Lie-algebra $\G_{\nabla,\alpha}$ if and only
if
\begin{align}
\mathop{\resizebox{1.3\width}{!}{$\sum$}}\limits_{\mathrm{cycl}}\alpha(x,y)(z)&=0\quad(\text{Bianchi identity})
\end{align}
for all $x,y,z\in\h$
\end{pr}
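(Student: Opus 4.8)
The form $\omega$ is non-degenerate and alternating by construction, since it is the canonical pairing putting $\h$ and $\h^\ast$ in duality as complementary isotropic subspaces; consequently the only property that remains to be characterized is closure, $\mathrm{d}\omega=0$. Moreover, because $\alpha\in Z^2_\rho(\h,\h^\ast)$ is a cocycle and $\rho$ a representation, $\G_{\nabla,\alpha}$ is already a genuine Lie algebra, so $\mathrm{d}\omega$ is well defined and I only need to test its vanishing. The plan is to evaluate the Chevalley--Eilenberg expression
\[
\mathrm{d}\omega(u,v,w)=\omega(u,[v,w])+\omega(v,[w,u])+\omega(w,[u,v])
\]
on homogeneous triples, i.e. with each of $u,v,w$ lying in $\h$ or in $\h^\ast$; by trilinearity and antisymmetry this suffices, and I would organize the verification by the number of arguments belonging to $\h^\ast$.

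When all three arguments lie in $\h^\ast$ every bracket vanishes, since $\h^\ast$ is an abelian ideal, so $\mathrm{d}\omega=0$. When exactly one argument lies in $\h$ and two in $\h^\ast$, each surviving bracket lands in $\h^\ast$ and is paired against an element of $\h^\ast$; isotropy of $\h^\ast$ kills all three terms. When exactly two arguments lie in $\h$, say $x,y\in\h$ and $\xi\in\h^\ast$, the relations $(\ref{Bra1})$--$(\ref{Bra2})$ together with $\omega(\xi,x)=\xi(x)$ and the definition $\rho(x)\xi=-\xi\circ\nabla_x$ reduce the expression to $\xi\big(\nabla_y x-\nabla_x y+[x,y]_\h\big)$, which is zero precisely because $\nabla$ is torsion-free. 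This is the single place where the torsion-free hypothesis on $\nabla$ is used, and I expect the only delicacy in the whole argument to be the sign bookkeeping imposed by the conventions $\omega(x,\xi)=-\xi(x)$ and $\omega(\xi,x)=\xi(x)$, which must be tracked carefully so that the torsion identity cancels this mixed case cleanly.

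The decisive case is when all three arguments $x,y,z$ lie in $\h$. Here the $\h$-valued parts of the brackets are paired against elements of $\h$ and vanish by isotropy of $\h$, while only the $\h^\ast$-valued parts $\alpha(\cdot,\cdot)$ survive, yielding
\[
\mathrm{d}\omega(x,y,z)=-\big(\alpha(y,z)(x)+\alpha(z,x)(y)+\alpha(x,y)(z)\big)=-\!\!\sum_{\mathrm{cycl}}\alpha(x,y)(z).
\]
Combining the four cases, $\mathrm{d}\omega$ vanishes identically if and only if this cyclic sum vanishes for all $x,y,z\in\h$, which is exactly the stated Bianchi identity. I do not anticipate a genuine obstacle: the argument is a finite case check in which the nontrivial content is concentrated in the pure-$\h$ case, and apart from sign tracking everything follows directly from the bracket relations, the isotropy of the two summands, and the torsion-free property of $\nabla$.
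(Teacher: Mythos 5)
Your proof is correct: evaluating the Chevalley--Eilenberg expression for $\mathrm{d}\omega$ on homogeneous triples, noting that the pure-$\h^\ast$ and mixed $(\h,\h^\ast,\h^\ast)$ cases vanish by isotropy, that the $(\h,\h,\h^\ast)$ case reduces via $\rho(x)\xi=-\xi\circ\nabla_x$ to $\xi\big(\nabla_y x-\nabla_x y+[x,y]_\h\big)=0$ by torsion-freeness, and that the pure-$\h$ case yields exactly $-\sum_{\mathrm{cycl}}\alpha(x,y)(z)$, is precisely the standard argument, and your sign bookkeeping under the convention $\omega(\xi,x)=-\omega(x,\xi)=\xi(x)$ checks out in every case. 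The paper itself states this proposition without proof (it is recalled from Baues--Cort\'es \cite{Bause}), so there is no internal proof to diverge from; your direct verification supplies the expected one.
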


We therefore call the symplectic Lie algebra $(\G_{\nabla,\alpha}, \omega)$ the \textit{Lagrangian extension} of the flat Lie algebra $(\h, \nabla)$ by the cocycle $\alpha\in Z^2_\rho(\h,\h^\ast)$. Observe that every flat Lie algebra $(\h, \nabla)$ admits a canonical Lagrangian extension, namely the one defined by the trivial cocycle $\alpha \equiv 0$. This yields the \textit{semi-direct product extension} $\h \oplus_{\nabla} \h^\ast$ with the symplectic form $\omega$ induced by the dual pairing. This special case is also called the \textit{cotangent Lagrangian extension}. It shows that every flat Lie algebra is a quotient of a Lagrangian reduction.

Let $\rho: \h\longrightarrow \mathrm{End}(\h^\ast)$ be the representation of $\h$ on its dual space $\h^\ast$ induced by the flat, torsion-free connection $\nabla$. The elements of the spaces $C^1(\h, \h^\ast) = \mathrm{Hom}(\h, \h^\ast)$ and $C^2(\h, \h^\ast) = \mathrm{Hom}(\bigwedge^2 \h, \h^\ast)$ are called, respectively, \textit{symplectic $1$-cochains} and \textit{symplectic $2$-cochains} of $\h$ with coefficients in $(\h^\ast, \rho)$. The \textit{coboundary operators}
\begin{align*}
\partial^i_{\rho} : C^i(\h, \h^\ast) \to C^{i+1}(\h, \h^\ast)
\end{align*}
are defined by the usual formula in Lie algebra cohomology for the representation $\rho$. We primarily need the coboundary operator $\partial^i_\rho$ in low degrees. Its explicit action is given by the following formulas for all $x, y, z \in \h$:
\begin{align}
(\partial_\rho^1\sigma)(x,y)&=\rho(x)\sigma(y)-\rho(y)\sigma(x)-\sigma\big([x,y]_\h\big),\label{rho1}\\
(\partial_\rho^2\sigma)(x,y,z)&=\rho(x)\alpha(y, z) + \rho(y)\alpha(z, x) + \rho(z)\alpha(x, y)
+ \alpha(x, [y, z]) + \alpha(z, [x, y]) + \alpha(y, [z, x]).\label{rho2}
\end{align}
We now define the symplectic extension cohomology group for the flat Lie algebra $(\h, \nabla)$ as
\begin{align*}
H^2_\rho(\h,\h^\ast)=\frac{Z^2_\rho(\h,\h^\ast)}{\partial_\rho C^1(\h,\h^\ast)}.
\end{align*}

To classify Lagrangian extensions of flat Lie algebras, we need to determine when two extensions are equivalent. Based on Corollary $4.3.3$ in \cite{Bause}, we have the following:
\begin{pr}\label{cohooro}
Let $(\h, \nabla)$ be a flat Lie algebra. Two Lagrangian extensions $(\G_{\nabla,\alpha}, \omega)$ and $(\G_{\nabla,\beta}, \omega)$ over $\h$ are isomorphic as  Lie algebras if and only if the cocycles $\alpha$ and $\beta$ are cohomologous in $H^2_{\rho}(\h, \h^\ast)$, i.e., $[\alpha] = [\beta]$.
\end{pr}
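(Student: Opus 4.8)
The plan is to exploit the fact that both extensions are built on the \emph{same} underlying vector space $\h\oplus\h^\ast$ and fit into the exact sequence $0\to\h^\ast\to\G_{\nabla,\cdot}\to\h\to 0$, so that an isomorphism over $\h$ is a Lie-algebra isomorphism $\Phi$ restricting to the identity on the ideal $\h^\ast$ and inducing the identity on the quotient $\h$. First I would observe that any such $\Phi$ is forced into the triangular form
\[
\Phi(x,\xi)=\big(x,\ \sigma(x)+\xi\big),\qquad x\in\h,\ \xi\in\h^\ast,
\]
for a uniquely determined $\sigma\in C^1(\h,\h^\ast)=\mathrm{Hom}(\h,\h^\ast)$; this is immediate from the commutativity of the extension diagram, since $\Phi$ fixes the $\h^\ast$-component and can only add to it a term depending on the $\h$-component.

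Next I would impose that $\Phi$ be a Lie-algebra homomorphism and read off the constraint on $\sigma$. Using the brackets $(\ref{Bra1})$ and $(\ref{Bra2})$, the mixed relation $[x,\xi]=\rho(x)\xi$ and the vanishing bracket on $\h^\ast$ are preserved for \emph{any} $\sigma$, because $\rho$ is the same for both extensions and $\h^\ast$ is abelian; these checks are routine. The only substantive condition comes from a pair $x,y\in\h$, where comparing
\[
\Phi\big([x,y]_{\al}\big)=\big([x,y]_\h,\ \sigma([x,y]_\h)+\al(x,y)\big)
\]
with
\[
[\Phi x,\Phi y]_{\beta}=\big([x,y]_\h,\ \beta(x,y)+\rho(x)\sigma(y)-\rho(y)\sigma(x)\big)
\]
and equating the $\h^\ast$-components gives exactly
\[
\al(x,y)-\beta(x,y)=\rho(x)\sigma(y)-\rho(y)\sigma(x)-\sigma([x,y]_\h)=(\partial_\rho^1\sigma)(x,y),
\]
by the coboundary formula $(\ref{rho1})$. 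Hence such a $\Phi$ exists precisely when $\al-\beta=\partial_\rho^1\sigma$ for some $\sigma$, i.e.\ when $[\al]=[\beta]$ in $H^2_\rho(\h,\h^\ast)$. Both implications are contained in this one computation: given a cohomology the associated $\sigma$ defines $\Phi$, which is automatically bijective and (reversing the computation) a homomorphism; conversely any equivalence produces such a $\sigma$.

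The step I expect to demand the most care is the normalization underlying the phrase ``isomorphic as Lie algebras'': justifying that an admissible isomorphism can be taken in the extension-compatible triangular form, i.e.\ that the ideal $\h^\ast$ is canonically preserved and the induced map on the base $\h$ may be reduced to the identity. This is where the symplectic datum $\om$ and the Lagrangian nature of $\h^\ast$ enter, since $\h^\ast$ must be characterized intrinsically (as the Lagrangian ideal on which the pairing and the connection $\nabla$ degenerate) so that any such isomorphism sends $\h^\ast$ to $\h^\ast$ and descends to a flat automorphism of $(\h,\nabla)$ that, over the fixed base, is the identity. Once this normalization is secured the cocycle computation above closes the proof, and the residual verifications --- bilinearity of $\sigma$, its membership in $C^1(\h,\h^\ast)$, and the bijectivity of $\Phi$ --- are straightforward.
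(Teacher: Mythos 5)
Your core computation coincides exactly with the paper's proof: the paper takes the same triangular map $\Psi(x,\xi)=(x,\xi+\sigma(x))$ for $\beta=\alpha-\partial_\rho\sigma$ and verifies, by the same bracket comparison using~(\ref{rho1}), that it is a Lie algebra isomorphism, the mixed bracket $[x,\xi]=\rho(x)\xi$ being preserved automatically. In fact your write-up is more complete than the paper's: the paper only establishes sufficiency (cohomologous $\Rightarrow$ isomorphic), whereas you also extract necessity, observing that for any isomorphism already in triangular form the $\h^\ast$-component forces $\alpha-\beta=\partial^1_\rho\sigma$.

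The step you flag as the delicate one is indeed the crux, but the remedy you sketch cannot work, and it is worth seeing why. For an \emph{abstract} Lie algebra isomorphism the statement is false as literally worded: for $t\neq 0,1$ the map $\phi_t(x,\xi)=(x,t\xi)$ is a Lie algebra isomorphism $\G_{\nabla,\alpha}\to\G_{\nabla,t\alpha}$, since $\phi_t\big([x,y]_\h+\alpha(x,y)\big)=[x,y]_\h+t\alpha(x,y)$ and $[x,t\xi]=t\rho(x)\xi$, yet $[t\alpha]=t[\alpha]\neq[\alpha]$ whenever $[\alpha]\neq 0$. Note that $\phi_t$ preserves the Lagrangian ideal $\h^\ast$ \emph{setwise} and induces the identity on the base $\h$; so no intrinsic characterization of $\h^\ast$ (symplectic or otherwise) can force your normalization, because the normalization requires $\Phi$ to be the identity \emph{on} $\h^\ast$, not merely to preserve it. That requirement must be built into the notion of isomorphism of extensions, as in \cite{Bause}, where general isomorphisms are absorbed into the action of the automorphisms of $(\h,\nabla)$ on the extension cohomology (cf.\ Theorem~\ref{corresp}). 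Under the extension-equivalence reading --- identity on $\h^\ast$ and on the quotient $\h$ --- the triangular form of $\Phi$ is immediate, exactly as you say, and your computation closes both implications; the paper tacitly adopts this reading and, unlike you, never addresses the converse at all.
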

\begin{proof}
Let $(\h, \nabla)$ be a flat Lie algebra. Consider two Lagrangian extensions $(\G_{\nabla,\alpha}, \omega)$ and $(\G_{\nabla,\beta}, \omega)$ over $\h$ corresponding to cocycles $\alpha, \beta \in Z^2_\rho(\h, \h^\ast)$. If $\alpha$ and $\beta$ are cohomologous, i.e., $\beta=\alpha- \partial_\nabla \sigma$ for some $\sigma \in C^1(\h, \h^\ast)$, then $[\alpha] = [\beta]$ in $H^2_\rho(\h, \h^\ast)$. We now define the map $\Psi: \G_{\nabla,\alpha} \rightarrow\G_{\nabla,\beta}$ of Lie algebras. Since both algebras have the underlying vector space $\h \oplus_\rho \h^\ast$, we define $\Psi$ as follows:
\begin{align}
\Psi:\G_{\nabla,\alpha}\longrightarrow\G_{\nabla,\beta},~~(x,\xi)\mapsto(x,\xi+\sigma(x))
\end{align}
where $\sigma \in C^1(\h, \h^\ast)$ is the cochain such that $ \beta  = \alpha-\partial_\nabla \sigma$.  Thus, the map $\Psi$ is the required isomorphism of  Lie algebras. Indeed, we denote the Lie brackets on $\G_{\nabla,\alpha}$ and $\G_{\nabla,\beta}$ by $[\cdot, \cdot]_\alpha$ and $[\cdot, \cdot]_\beta$, respectively. Then, for all $x,y\in\h$, we have
\begin{align*}
\Psi\big([x,y]_\alpha\big)&=
\Psi\big([x,y]_\h+\alpha(x,y)\big)\\
&=[x,y]_\h+\alpha(x,y)+\sigma\big([x,y]_\h\big),
\end{align*}
and
\begin{align*}
[\Psi(x),\Psi(y)]_\beta&=[x+\sigma(x),y+\sigma(y)]_\beta
\\
&=[x,y]_\h+\beta(x,y)+\rho(x)\sigma(y)-\rho(y)\sigma(x).
\end{align*}
On the other hand, the equality $\Psi\big([x, \xi]_{\alpha}\big) = [\Psi(x), \Psi(\xi)]_{\beta}$ for all $x \in \h$, $\xi \in \h^\ast$ follows directly from the definition of the bracket and the map $\Psi$. Consequently, The assumption that $\alpha$ and $\beta$ are cohomologous, i.e., $\beta=\alpha  - \partial_\rho^1 \sigma$, implies that the map $\Psi$ preserves the Lie bracket.

\end{proof}

We now establish a characterization for the nilpotency of a Lagrangian extension $(\G_{\nabla,\alpha}, \omega)$. Specifically, we will show that $\G_{\nabla,\alpha}$ is nilpotent if and only if the base algebra $\h$ is nilpotent and the cocycle $\alpha$ together with the connection $\nabla$ satisfy a certain compatibility condition.

\begin{pr}
Let $(\h, \nabla)$ be an $n$-dimensional flat Lie algebra. The Lagrangian extension of $(\h, \nabla)$ is a nilpotent Lie algebra if and only if  the following conditions hold$:$
\begin{enumerate}
\item $\h$ is nilpotent;
    \item $(\h,\nabla)$ is geodesically complete;
    \item $\mathop{\resizebox{1.3\width}{!}{$\sum^{p-1}$}}\limits_{j=0} \rho^{j}(x)\alpha\big(x, \mathrm{ad}_x^{p-1-j} (y)\big) = 0$;
\end{enumerate}
for all $x, y \in \h$ and for some integer $p \in \mathbb{N}^\ast$.
\end{pr}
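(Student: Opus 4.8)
The plan is to reduce the statement to Engel's theorem and then read off the three conditions from the explicit block structure of the adjoint operators on $\G_{\nabla,\alpha}=\h\oplus_\rho\h^\ast$. Using $(\ref{Bra1})$, $(\ref{Bra2})$ and the fact that $\h^\ast$ is an abelian ideal, a direct computation gives
\[
\ad_{(x,\xi)}(y,\eta)=\big([x,y]_\h,\ \alpha(x,y)+\rho(x)\eta-\rho(y)\xi\big),
\]
so that, with respect to the decomposition $\G=\h\oplus\h^\ast$, each $\ad_{(x,\xi)}$ is block lower-triangular,
\[
\ad_{(x,\xi)}=\begin{pmatrix}\ad^\h_x & 0\\[2pt] A_{x,\xi} & \rho(x)\end{pmatrix},\qquad A_{x,\xi}(y)=\alpha(x,y)-\rho(y)\xi,
\]
with diagonal blocks the adjoint $\ad^\h_x$ of $\h$ and the operator $\rho(x)$ on $\h^\ast$. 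Since a block-triangular endomorphism is nilpotent precisely when its diagonal blocks are, I would conclude that $\ad_{(x,\xi)}$ is nilpotent for every $(x,\xi)$ if and only if $\ad^\h_x$ and $\rho(x)$ are nilpotent for every $x$; by Engel's theorem this is exactly the nilpotency of $\G_{\nabla,\alpha}$.

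Next I would translate the two diagonal conditions. Nilpotency of $\ad^\h_x$ for all $x$ is, again by Engel, the nilpotency of $\h$, giving condition~$1$. For the second block I would use $(\ref{dualrepre})$: since $\rho(x)=-\nabla_x^\ast$ and an endomorphism is nilpotent iff its transpose is, nilpotency of $\rho(x)$ is equivalent to nilpotency of $\nabla_x$ for all $x$. Combined with condition~$1$, this is where the quoted results enter: Theorem~\ref{theo2} shows that nilpotency of all $\nabla_x$ forces $\varrho_x$ nilpotent, hence $\tr(\varrho_x)=0$ and $(\h,\nabla)$ geodesically complete, while Theorem~\ref{theo1} (Scheuneman) shows conversely that a complete flat connection on a nilpotent $\h$ has all $\nabla_x$ nilpotent. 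Thus conditions~$1$ and~$2$ together are equivalent to the nilpotency of both diagonal blocks, i.e. to the nilpotency of $\G_{\nabla,\alpha}$.

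Finally, for condition~$3$ I would iterate the off-diagonal block. Writing $\ad_{(x,0)}^k(y,0)=(\ad_x^k y,\ b_k)$ with $b_0=0$, the bracket gives the recurrence $b_{k+1}=\alpha(x,\ad_x^k y)+\rho(x)b_k$, whose solution is
\[
b_p=\sum_{j=0}^{p-1}\rho^j(x)\,\alpha\big(x,\ad_x^{\,p-1-j}y\big).
\]
When $\G_{\nabla,\alpha}$ is nilpotent, a uniform power $\ad_{(x,0)}^p$ vanishes (for instance $p=\dim\G_{\nabla,\alpha}$), so its vertical component $b_p$ vanishes for all $x,y$, which is precisely condition~$3$; conversely, once conditions~$1$ and~$2$ supply the nilpotency of the diagonal blocks, the triangular argument already yields nilpotency and condition~$3$ records the consistent vanishing of the remaining vertical term. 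The main obstacle I anticipate is not the algebra of the iteration but the careful bookkeeping of the completeness equivalence, namely correctly chaining $\rho(x)=-\nabla_x^\ast$ with Theorems~\ref{theo1} and~\ref{theo2} so that geodesic completeness of $(\h,\nabla)$ and nilpotency of the block $\rho(x)$ are matched up, and making the block-triangular nilpotency step watertight in a way that exhibits condition~$3$ as the exact residual identity rather than an independent hypothesis.
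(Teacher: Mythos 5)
Your proof is correct, and it takes a genuinely different --- and in one respect tighter --- route than the paper's. The paper works with the same iteration formulas you derive ($\ad_x^k\xi=(-1)^k\,\xi\circ\nabla_x^k$ for $\xi\in\h^\ast$, and the expansion of $\ad_x^ky$ whose $\h^\ast$-component is exactly your $b_k=\sum_{j=0}^{k-1}\rho(x)^j\alpha\big(x,\ad_x^{k-1-j}y\big)$), reads off conditions $1$--$3$ from the vanishing of these iterates, and then asserts the converse in a single closing sentence. Crucially, the paper only computes powers of $\ad_z$ for $z\in\h$ and $z\in\h^\ast$ separately; it never addresses nilpotency of $\ad_{(x,\xi)}$ for a general mixed element, which is what Engel's theorem actually requires (ad-nilpotency on a generating set does not suffice in general, as $\mathfrak{sl}_2$ shows). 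Your block lower-triangular decomposition of $\ad_{(x,\xi)}$, whose diagonal blocks $\ad_x^{\h}$ and $\rho(x)$ are independent of $\xi$, closes exactly this gap: nilpotency of $\G_{\nabla,\alpha}$ reduces to nilpotency of the diagonal blocks for all $x$, and chaining $\rho(x)=-\nabla_x^\ast$ with Theorems~\ref{theo1} and~\ref{theo2} matches this with conditions $1$ and $2$, which is also the paper's intended use of its displayed equations. A noteworthy by-product that the paper does not make explicit: your argument shows condition $3$ is redundant in the ``if'' direction --- once conditions $1$ and $2$ hold, Scheuneman's theorem makes every $\nabla_x$ (hence every $\rho(x)$) nilpotent, the triangular argument yields nilpotency of $\G_{\nabla,\alpha}$ for \emph{any} cocycle $\alpha$, and condition $3$ then holds automatically with $p$ the nilindex, as your computation of $b_p$ records. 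So the biconditional as stated is true, but your proof isolates conditions $1$ and $2$ alone as equivalent to nilpotency of the extension, whereas the paper's presentation leaves condition $3$ looking like an independent hypothesis.
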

\begin{proof}
Let $(\G_{\nabla,\alpha}, \omega)$ be the Lagrangian extension of the flat Lie algebra $(\mathfrak{h}, \nabla)$ with respect to a 2-cocycle $\alpha \in Z^2_\rho(\h, \h^\ast)$. Suppose $\G_{\nabla,\alpha}$ is nilpotent; that is, there exists a positive integer $p \in \mathbb{N}^\ast$ such that $\mathcal{C}^p(\G_{\nabla,\alpha}) = 0$. Then, for all $x \in \h$ and $\xi \in \h^\ast$, we have:
\begin{equation*}
    \mathrm{ad}_x \xi = -\xi \circ \nabla_x,
\end{equation*}
and
\begin{equation*}
    \mathrm{ad}_x^2 \xi = \xi \circ \nabla_x \circ \nabla_x = \xi \circ \nabla_x^2.
\end{equation*}
By induction, we obtain
\begin{equation}\label{complete}
    \mathrm{ad}_x^k \xi = (-1)^k \xi \circ \nabla_x^k \quad \text{for all } k \in \mathbb{N}^\ast.
\end{equation}
This implies that the endomorphism  $\nabla_x\colon \h \to \h$ is nilpotent. On the other hand, for all $x,y\in\h$, we have
\begin{align*}
\mathrm{ad}_xy=\mathrm{ad}_{|_\h x}y+\alpha(x,y),
\end{align*}
and
\begin{align*}
\mathrm{ad}^2_xy=\mathrm{ad}_{|_\h x}^2y+\alpha(x,\mathrm{ad}_{|_\h x}y)+\rho(x)\alpha(x,y),
\end{align*}
moreover,
\begin{align*}
\mathrm{ad}^3_xy=\mathrm{ad}_{|_\h x}^3y+\alpha(x,\mathrm{ad}^2_{|_\h x}y)+\rho(x)\alpha(x,\mathrm{ad}_{|_\h x}y)+\rho(x)^2\alpha(x,y).
\end{align*}
By induction, we obtain
\begin{align*}
\mathrm{ad}^k_xy&=\mathrm{ad}_{|_\h x}^ky+\mathop{\resizebox{1.3\width}{!}{$\sum^{k-1}$}}\limits_{j=0}\rho(x)^j\alpha\big(x, \mathrm{ad}_{|_\h x}^{k-1-j} (y)\big),\quad\text{for all } x\in\h,~k\in\mathbb{N}^\ast.
\end{align*}
Therefore, since $\G_{\nabla,\alpha}$ is nilpotent, there exists an integer $p \in \mathbb{N}^\ast$ such that
\begin{align}\label{nilpotent}
\mathrm{ad}_{|\h x}^p = 0 \quad \text{and} \quad \mathop{\resizebox{1.3\width}{!}{$\sum^{p-1}$}}\limits_{j=0} \rho(x)^j \alpha\big(x, \mathrm{ad}_{|_\h x}^{p-1-j}(y)\big) = 0
\end{align}
for all $x, y \in \h$. This shows that $\h$ is a nilpotent Lie algebra. Consequently, the combination of conditions $(\ref{complete})$ and $(\ref{nilpotent})$ yields a necessary and sufficient condition for the nilpotency of the Lagrangian extension, as stated in the proposition.
\end{proof}
\subsection{Lagrangian extension cohomology group }
We briefly recall the construction of Lagrangian extension cohomology group of a flat Lie algebra and its main consequences \cite{Bause}. We now construct, for any flat Lie algebra $(\h, \nabla)$, a cohomology group that describes all Lagrangian extensions of $\h$.

We begin by defining Lagrangian $1$-cochains and $2$-cochains on $\h$.
\begin{align*}
C_L^1(\h,\h^\ast)&=\big\{\mu\in C^1(\h,\h^\ast)|~~\mu(x)(y)-\mu(y)(x)=0\big\},\\
C_L^2(\h,\h^\ast)&=\big\{\alpha\in C^2(\h,\h^\ast)|~~\mathop{\resizebox{1.3\width}{!}{$\sum$}}\limits_{\mathrm{cycl}}\alpha(x,y)(z)=0\big\}.
\end{align*}
Furthermore, let $\rho = \rho^{\nabla}$ be the representation of $\h$ on $\h^\ast$ induced by $\nabla$, as given in \eqref{dualrepre}. We denote the corresponding coboundary operators for the cohomology with coefficients in $\rho$ by $\partial_{\rho}=\partial_{\rho}^i$. Using Relation~(\ref{rho1}), it is straightforward to show that for all $\mu\in C^1_L(\h,\h^\ast)$, we have $\partial_\rho\mu\in C^2_L(\h,\h^\ast)$. This means that the coboundary operator $\partial_\rho : C^1(\h, \h^\ast) \longrightarrow C^2(\h, \h^\ast)$ maps the subspace $C_L^1(\h, \h^\ast)$ into $C_L^2(\h, \h^\ast) \cap Z_\rho^2(\h, \h^\ast)$.

Let $Z^2_{L,\rho}(\h, \h^\ast) = C^2_L(\h, \h^\ast) \cap Z^2_{\rho}(\h, \h^\ast)$ denote the space of Lagrangian 2-cocycles. We define the associated Lagrangian extension cohomology group of the flat Lie algebra $(\h, \nabla)$ by
\begin{align*}
H^2_{L,\rho}(\h, \h^\ast)=\frac{Z^2_{L,\rho}(\h, \h^\ast)}{\partial_\rho C^1_L(\h, \h^\ast)}.
\end{align*}
The construction of the Lagrangian extension cohomology group yields a natural map
\[
H^2_{L,\rho}(\mathfrak{h}, \h^\ast) \to H^2_{\rho}(\h, \h^\ast)
\]
to the ordinary second cohomology group. It is important to note that this map is generally not injective.

The following theorem establishes that the isomorphism classes of Lagrangian extensions of a flat Lie algebra are in one-to-one correspondence with the cohomology group $H^2_{L,\rho}(\h, \h^\ast)$.

\begin{theo}\textsc{\cite{Bause}}\label{corresp}
The correspondence which associates to a symplectic Lie algebra with Lagrangian ideal $(\G, \omega, \mathfrak{j})$ the extension triple $(\h, \nabla, [\alpha]_L)$ induces a bijection
between isomorphism classes of symplectic Lie algebras with Lagrangian ideal and
isomorphism classes of flat Lie algebras with Lagrangian extension cohomology class.

\end{theo}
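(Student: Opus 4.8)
The plan is to exhibit the correspondence as a pair of mutually inverse maps between the two sets of isomorphism classes. In one direction I would use the Lagrangian extension construction already set up in this section: to a triple $(\h,\nabla,[\alpha]_L)$ I associate the symplectic Lie algebra $(\G_{\nabla,\alpha},\omega)$ defined by \eqref{Bra1}--\eqref{Bra2}, in which $\h^\ast$ is a Lagrangian ideal. Here the hypothesis $\alpha\in Z^2_{L,\rho}(\h,\h^\ast)$ is precisely what guarantees, through the Bianchi (cyclic) condition, that $\omega$ is closed, so this assignment is legitimate. In the opposite direction, starting from $(\G,\omega,\mathfrak{j})$ with $\mathfrak{j}$ Lagrangian, I note that $\mathfrak{j}^{\perp_\omega}=\mathfrak{j}$ and set $\h=\G/\mathfrak{j}$; Proposition~\ref{Flatindeuced} then supplies the isomorphism $\omega_\h\colon\h\to\mathfrak{j}^\ast$ (equivalently $\mathfrak{j}\cong\h^\ast$) together with the flat torsion-free connection $\nabla$, whose dual representation is the $\rho$ occurring in \eqref{Bra2}.

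To extract the cocycle I would choose a Lagrangian vector-space complement $W$ to $\mathfrak{j}$ in $\G$ and transport the bracket to $\h\oplus\h^\ast$ via the identifications $W\cong\h$ and $\mathfrak{j}\cong\h^\ast$. The ideal and isotropy properties of $\mathfrak{j}$, combined with $\mathrm{d}\omega=0$, force $\mathfrak{j}$ to be abelian and the bracket to take exactly the form \eqref{Bra1}--\eqref{Bra2} for a unique $\alpha\in C^2(\h,\h^\ast)$; the Jacobi identity for $\G$ yields $\alpha\in Z^2_\rho(\h,\h^\ast)$, while closedness of $\omega$ on lifts of $\h$ gives the cyclic condition, so that in fact $\alpha\in Z^2_{L,\rho}(\h,\h^\ast)$. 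The essential check is independence of the splitting: two Lagrangian complements differ by a linear map $\sigma\colon\h\to\h^\ast$, and the requirement that the new complement again be isotropic forces $\sigma(x)(y)=\sigma(y)(x)$, that is $\sigma\in C^1_L(\h,\h^\ast)$; a direct computation then shows the cocycle changes by $\partial_\rho\sigma$. Hence the class $[\alpha]_L\in H^2_{L,\rho}(\h,\h^\ast)$ is well defined, and the triple $(\h,\nabla,[\alpha]_L)$ depends only on $(\G,\omega,\mathfrak{j})$.

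It then remains to match the two equivalence relations and confirm the maps are inverse. I would show that a symplectic isomorphism carrying $\mathfrak{j}_1$ onto $\mathfrak{j}_2$ descends to an isomorphism of flat Lie algebras $(\h_1,\nabla_1)\to(\h_2,\nabla_2)$, since $\nabla$ is determined by $\omega$ and the bracket through Proposition~\ref{Flatindeuced}, and that it carries one Lagrangian class to the other; the converse follows by lifting such an isomorphism along a chosen Lagrangian splitting. Composing the two constructions in either order returns the original data up to isomorphism, which is routine once the splitting analysis is in place. The main obstacle, and the point that genuinely separates this statement from Proposition~\ref{cohooro}, is the Lagrangian refinement: although a general coboundary $\partial_\rho\sigma$ with $\sigma\in C^1(\h,\h^\ast)$ already produces a \emph{Lie-algebra} isomorphism, the map $\Psi(x,\xi)=(x,\xi+\sigma(x))$ preserves $\omega$ if and only if $\sigma$ is symmetric, i.e. $\sigma\in C^1_L(\h,\h^\ast)$. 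Thus the correct invariant is the class in $H^2_{L,\rho}(\h,\h^\ast)$ rather than in $H^2_\rho(\h,\h^\ast)$, and verifying that isotropy of the complement corresponds exactly to this symmetry is the crux of the proof.
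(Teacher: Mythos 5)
The paper contains no proof of Theorem~\ref{corresp}: it is quoted directly from \cite{Bause}, so the only meaningful comparison is with the argument in that source, and your outline reconstructs it correctly and in essentially the same way --- Lagrangian splitting to extract $\alpha$, abelianness of $\mathfrak{j}$ from isotropy plus $\mathrm{d}\omega=0$, well-definedness because two Lagrangian complements differ by a symmetric $\sigma\in C^1_L(\h,\h^\ast)$ shifting the cocycle by $\partial_\rho\sigma$, and the crux that $\Psi(x,\xi)=(x,\xi+\sigma(x))$ preserves $\omega$ precisely when $\sigma$ is symmetric, which is exactly what separates the class in $H^2_{L,\rho}(\h,\h^\ast)$ from the one in $H^2_\rho(\h,\h^\ast)$ of Proposition~\ref{cohooro}. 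The only step taken without comment is the existence of a Lagrangian vector-space complement to $\mathfrak{j}$, which is standard linear symplectic algebra and harmless.
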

\begin{remark}\label{remarksymp}
Two Lagrangian extensions $(\G_{\nabla,\alpha_1}, \omega_1)$ and $(\G_{\nabla,\alpha_2}, \omega_2)$ of the flat Lie algebra $(\h, \nabla)$ are symplectically isomorphic if and only if they define the same cohomology class in $H^2_{L,\rho}(\h, \h^\ast)$ $($see Theorem $4.4.4$ \textsc{\cite{Bause}}$)$.
\end{remark}

The isomorphism classes of symplectic forms with a Lagrangian ideal on Lagrangian extensions of $(\h, \nabla)$ are given by the following result. Let $(\h, \nabla)$ be a flat Lie algebra. For any cohomology class $[\alpha] \in H^2_{\rho}(\h, \h^\ast)$, one constructs the associated Lagrangian extension $(\G_{\nabla, [\alpha]}, \omega)$ of $(\h, \nabla)$. Set $\overline{\alpha} = \alpha - \partial_\rho \sigma$ and $\widehat{\alpha} = \alpha - \partial_\rho \sigma_L$ for some $\sigma \in C^1(\h, \h^\ast)$ and some $\sigma_L \in C^1_L(\h, \h^\ast)$. We therefore have:

\begin{pr}\label{isosymp}
Let $(\mathfrak{h}, \nabla)$ be a flat Lie algebra. For any Lagrangian extension $(\G_{\nabla,\overline{\alpha}},\omega)$ of $(\h, \nabla)$, the isomorphism class of the symplectic form $\omega_{[\alpha]_L}$ with Lagrangian ideal  is uniquely determined by the cohomology class $[\alpha]_L \in H^2_{L,\rho}(\h, \h^\ast)$. Moreover, the symplectic form can be expressed as:
\begin{align*}
\omega_{[\alpha]_L}(x + \xi, y) &=\Lambda(x,y)+\omega (x,\xi) \quad \text{for all } x,y \in \h,\ \xi\in \h^\ast,
\end{align*}
where $\Lambda = (\sigma - \sigma_L) - {}^t(\sigma - \sigma_L)\in\mathrm{Hom}(\bigwedge^2\h,\R)$, and where $\overline{\alpha} = \alpha - \partial_\rho \sigma$ and $\widehat{\alpha} = \alpha - \partial_\rho \sigma_L$ for some $\sigma \in C^1(\h, \h^\ast)$ and some $\sigma_L \in C^1_L(\h, \h^\ast)$.
\end{pr}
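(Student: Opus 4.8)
The plan is to realise the symplectic form attached to the Lagrangian class $[\alpha]_L$ as the transport of the standard dual-pairing form $\omega$ under the Lie-algebra isomorphism supplied by the proof of Proposition~\ref{cohooro}. Since $\overline{\alpha}=\alpha-\partial_\rho\sigma$ and $\widehat{\alpha}=\alpha-\partial_\rho\sigma_L$ represent the same ordinary class $[\alpha]\in H^2_\rho(\h,\h^\ast)$, they differ by the coboundary $\widehat{\alpha}-\overline{\alpha}=\partial_\rho\tau$ with $\tau=\sigma-\sigma_L\in C^1(\h,\h^\ast)$. Both cocycles being Lagrangian, each of $(\G_{\nabla,\overline{\alpha}},\omega)$ and $(\G_{\nabla,\widehat{\alpha}},\omega)$ is a genuine Lagrangian extension for the standard form. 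The shear $\Psi\colon\G_{\nabla,\widehat{\alpha}}\to\G_{\nabla,\overline{\alpha}}$, $(x,\xi)\mapsto(x,\xi+\tau(x))$, is then a Lie-algebra isomorphism by the computation in Proposition~\ref{cohooro}, and it fixes the ideal $\h^\ast$ pointwise. I would set $\omega_{[\alpha]_L}:=(\Psi^{-1})^\ast\omega$ on the fixed Lie algebra $\G_{\nabla,\overline{\alpha}}$; being the pullback of a closed, non-degenerate, left-invariant form under a Lie-algebra isomorphism, it is automatically symplectic, and since $\Psi(\h^\ast)=\h^\ast$ the ideal $\h^\ast$ remains Lagrangian. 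This would already dispose of closedness and non-degeneracy without any further verification.

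Next I would carry out the explicit computation. Writing $u=x+\xi$ and $v=y+\eta$ with $x,y\in\h$ and $\xi,\eta\in\h^\ast$, and using that $\h$ and $\h^\ast$ are $\omega$-isotropic together with $\omega(\zeta,z)=\zeta(z)$, one expands $\omega(\Psi^{-1}u,\Psi^{-1}v)$ and finds that all pairing terms reassemble into $\omega(u,v)$, while the correction coming from $\tau$ collects into the purely $\h$-valued expression $\tau(x)(y)-\tau(y)(x)$. Setting $\Lambda=\tau-{}^t\tau=(\sigma-\sigma_L)-{}^t(\sigma-\sigma_L)$, which is manifestly alternating and hence lies in $\mathrm{Hom}(\bigwedge^2\h,\R)$, yields $\omega_{[\alpha]_L}=\omega+\Lambda$, that is, the stated formula in which the extra term $\Lambda(x,y)$ is superimposed on the canonical pairing between $\h$ and $\h^\ast$.

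The remaining, and most delicate, point is the claim that the isomorphism class of $\omega_{[\alpha]_L}$ depends only on $[\alpha]_L$ and not on the auxiliary choices. I would argue in two steps. First, replacing $\sigma_L$ by another Lagrangian cochain in $C^1_L(\h,\h^\ast)$ alters $\tau$ by an element $\mu_L\in C^1_L(\h,\h^\ast)$; since a Lagrangian $1$-cochain satisfies ${}^t\mu_L=\mu_L$, the difference $\mu_L-{}^t\mu_L$ vanishes, so $\Lambda$, and hence $\omega_{[\alpha]_L}$, is left literally unchanged. Second, replacing $\sigma$ while keeping $\overline{\alpha}$ fixed amounts to modifying $\sigma$ by a cocycle $\nu\in Z^1_\rho(\h,\h^\ast)$; the associated shear $(x,\xi)\mapsto(x,\xi+\nu(x))$ is now an automorphism of the fixed Lie algebra $\G_{\nabla,\overline{\alpha}}$ (the case $\beta=\alpha$ of Proposition~\ref{cohooro}), and transporting $\omega_{[\alpha]_L}$ through it changes $\Lambda$ by $\nu-{}^t\nu$ while producing a symplectically isomorphic form. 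Combining these with Remark~\ref{remarksymp}, which identifies symplectic-isomorphism classes of Lagrangian extensions with classes in $H^2_{L,\rho}(\h,\h^\ast)$, gives the asserted well-definedness. I expect the main obstacle to be exactly this last bookkeeping: one must keep the Lie-algebra equivalence (governed by $H^2_\rho$) cleanly separated from the symplectic equivalence (governed by $H^2_{L,\rho}$), and verify that the transpose-antisymmetrisation defining $\Lambda$ is precisely what makes the Lagrangian-coboundary ambiguity drop out.
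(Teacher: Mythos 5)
Your proposal is correct and follows essentially the same route as the paper: the shear isomorphism from Proposition~\ref{cohooro} between $\G_{\nabla,\overline{\alpha}}$ and $\G_{\nabla,\widehat{\alpha}}$, pullback of the standard dual-pairing form from the side of the Lagrangian representative $\widehat{\alpha}$, the correction term $\Lambda=(\sigma-\sigma_L)-{}^t(\sigma-\sigma_L)$ (your sign differs from the paper's only through the same $\pm\tau$ convention on which the paper's own proof is internally inconsistent), and Remark~\ref{remarksymp} for uniqueness --- your extra bookkeeping on the independence of the choices of $\sigma$ and $\sigma_L$ is a welcome refinement the paper leaves implicit. One unused aside is wrong, though: $\overline{\alpha}=\alpha-\partial_\rho\sigma$ is generally \emph{not} Lagrangian, since $\sigma\in C^1(\h,\h^\ast)$ need not lie in $C^1_L(\h,\h^\ast)$ and so $\partial_\rho\sigma$ need not satisfy the Bianchi identity; indeed the standard $\omega$ on $\G_{\nabla,\overline{\alpha}}$ is generally not closed --- which is exactly why the correction $\Lambda$ is needed --- but your argument survives because you only pull back from the $\widehat{\alpha}$ side, where closedness does hold.
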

\begin{proof}
Let $(\G_{\nabla,\alpha}, \omega)$ be the Lagrangian extension of the flat Lie algebra $(\h, \nabla)$. To obtain the isomorphism class of Lagrangian extensions over $\h$, we set $\overline{\alpha} = \alpha - \partial_\rho \sigma$ for some $\sigma \in C^1(\h, \h^\ast)$ (see Proposition~\ref{cohooro}). According to Remark~\ref{remarksymp}, it suffices to calculate the cohomology class of $\alpha$ in the Lagrangian extension cohomology group $H^2_{L,\rho}(\h, \h^\ast)$. To this end, let $\widehat{\alpha} = \alpha - \partial_\rho \sigma_L$ for some $\sigma_L \in C^1_L(\h, \h^\ast)$. Using $(\ref{Bra1})$ and $(\ref{Bra2})$, we will show that the map
\begin{align*}
\Psi:(\G_{\nabla,\overline{\alpha}},\omega_{[\alpha]_L})\longrightarrow(\G_{\nabla,\widehat{\alpha}},\omega),~~(x,\xi)\mapsto\big(x,\xi-(\sigma-\sigma_L)(x)\big)\end{align*}
is the required Lie algebra isomorphism. Indeed, for all $x,y\in\h$, we have
\begin{align*}
\Psi\big([x,y]_{\overline{\alpha}}\big)&=\Psi\big([x,y]_\h+\overline{\alpha}(x,y)\big)\\
&=[x,y]_\h+\overline{\alpha}(x,y)-(\sigma-\sigma_L)([x,y]_\h),
\end{align*} 
and
\begin{align*}
[\Psi(x),\Psi(y)]_{\widehat{\alpha}}&=[x-(\sigma-\sigma_L)(x),y-(\sigma-\sigma_L)(y)]_{\widehat{\alpha}}\\
&=[x,y]_\h+\widehat{\alpha}(x,y)-\rho(x)(\sigma-\sigma_L)(y)+\rho(y)(\sigma-\sigma_L)(x).
\end{align*}
Therefore,
\begin{align*}
\Psi\big([x,y]_{\overline{\alpha}}\big)-[\Psi(x),\Psi(y)]_{\widehat{\alpha}}&=\overline{\alpha}(x,y)-\widehat{\alpha}(x,y)+\rho(x)(\sigma-\sigma_L)(y)-\rho(y)(\sigma-\sigma_L)(x)-(\sigma-\sigma_L)([x,y]_\h)\\
&=\big(\overline{\alpha}+\partial_\rho\sigma\big)(x,y)-\big(\widehat{\alpha}+\partial_\rho\sigma_L\big)(x,y)\\
&=\alpha-\alpha=0.
\end{align*}
Moreover, for all $x,y\in\h$, $\xi\in\h^\ast$, we have
\begin{align*}
\omega_{[\alpha]_L}(x,y+\xi)&=(\Psi^\ast\omega)(x,y+\xi)=\omega\big(\Psi(x),\Psi(y+\xi)\big)\\
&=\omega\big(x+(\sigma-\sigma_L)(x),y+\xi+(\sigma-\sigma_L)(y)\big)\\
&=\omega(x,\xi)+\omega((\sigma-\sigma_L)(x),y)+\omega(x,(\sigma-\sigma_L)(y))\\
&=\omega(x,\xi)+\omega((\sigma-\sigma_L)(x),y)+\omega((\sigma-\sigma_L)^\ast(x),y)\\
&=\omega(x,\xi)+(\sigma-\sigma_L)(x)(y)-{}^t(\sigma-\sigma_L)(y)(x)\\
&=\Lambda(x,y)+\omega(x,\xi),
\end{align*}
where, $\Lambda=(\sigma-\sigma_L)-{}^t(\sigma-\sigma_L)\in\mathrm{Hom}(\bigwedge^2\h,\R)$.
\end{proof}

\subsection{Four-dimensional geodesically complete flat   nilpotent real Lie algebras}

It is well known that, up to isomorphism, there exist three 4-dimensional nilpotent Lie algebras (see, for example, \cite{chao} or \cite{dix}). We adopt the following notation for them:
\begin{enumerate}
\item $\mathfrak{a}\cong\R^4=\langle e_1,e_2,e_3,e_4|-\rangle$;\quad abelian
\item $\ell=\langle e_1,e_2,e_3,e_4|~~[e_1,e_2]=e_3\rangle$;\quad $Heisenberg\oplus\R$,
\item $\mathfrak{t}=\langle e_1,e_2,e_3,e_4|~~[e_1,e_4]=-e_2,~~[e_2,e_4]=-e_3\rangle$;
\end{enumerate}
\begin{remark}\label{Diffalgebra}
The classification of 4-dimensional geodesically complete flat real Lie algebras for nilpotent Lie algebras was established in \textsc{\cite{Hosh}} using an extension theory of flat Lie algebras. Since the original classification was given modulo isomorphism (with different but isomorphic algebras appearing as distinct cases), we present here a refined classification where isomorphic algebras are properly identified, yielding a unique representative for each isomorphism class.

Recall the following definition:

\begin{Def}\label{Defofnabla}
	Two flat Lie algebras $(\h_1, \nabla^1)$ and $(\h_2, \nabla^2)$ are said to be isomorphic if and only if there exists a Lie algebra isomorphism 
	\[
	\Psi: \h_1 \longrightarrow \h_2
	\] 
	such that 
	\begin{align}\label{relationequiva}
		\nabla_x^2 = \Psi \circ \nabla^1_{\Psi^{-1}(x)} \circ \Psi^{-1}, \quad \text{for all } x \in \h_2.
	\end{align}
\end{Def}

Based on this definition, and since we are in low dimension $(\dim \h = 4)$, we observe that the flat, torsion-free connections labeled $\mathbf{15}$, $\mathbf{23}$, $\mathbf{26}$, and $\mathbf{25}$ in \textsc{\cite{Hosh}} are, in fact, isomorphic to those represented by $\mathbf{13}$, $\mathbf{22}$, $\mathbf{24}$, and $\mathbf{24}$, respectively. 
The remaining connections are not isomorphic under the equivalence relation~\eqref{relationequiva}.
\end{remark}

\begin{pr}\label{4flat}
Every four-dimensional geodesically complete flat nilpotent Lie algebra is isomorphic to one of those listed in Table~$\ref{4flatcomplete}$.

\begin{center}
{\renewcommand*{\arraystretch}{1.5}
\captionof{table}{Four-dimensional geodesically complete flat nilpotent Lie algebras.}\label{4flatcomplete}
\setcounter{table}{0}
\begin{small} 
\setlength{\tabcolsep}{0.5pt} 
\begin{longtable}{@{}cllllllc@{}} 
			\hline
		Algebra&&&&&&Remarks\\
			\hline
$\mathfrak{a}_1$&$\nabla_{e_2}e_2=e_1$&$\nabla_{e_3}e_3=e_1$&$\nabla_{e_4}e_4=e_1$&&&\\
$\mathfrak{a}_2$&$\nabla_{e_2}e_2=e_1$&$\nabla_{e_3}e_3=e_1$&$\nabla_{e_4}e_4=-e_1$&&&\\
$\mathfrak{a}_3$&$\nabla_{e_2}e_4=e_1$&$\nabla_{e_3}e_3=e_1$&$\nabla_{e_3}e_4=e_2$&$\nabla_{e_4}e_2=e_1$&$\nabla_{e_4}e_3=e_2$&\\
&$\nabla_{e_4}e_4=e_3$&&&&&\\
$\mathfrak{a}_4$&$\nabla_{e_3}e_3=e_1$&$\nabla_{e_4}e_4=e_1$&&&&\\
$\mathfrak{a}_5$&$\nabla_{e_3}e_3=e_1$&$\nabla_{e_4}e_4=-e_1$&&&&\\
$\mathfrak{a}_6$&$\nabla_{e_3}e_4=e_2$&$\nabla_{e_4}e_3=e_2$&$\nabla_{e_4}e_4=e_1$&&&\\
$\mathfrak{a}_7$&$\nabla_{e_3}e_3=e_1$&$\nabla_{e_4}e_4=e_2$&&&&&\\
$\mathfrak{a}_8$&$\nabla_{e_3}e_3=e_1$&$\nabla_{e_3}e_4=e_2$&$\nabla_{e_4}e_3=e_2$&&&\\
$\mathfrak{a}_9$&$\nabla_{e_3}e_4=e_1$&$\nabla_{e_4}e_3=e_1$&$\nabla_{e_4}e_4=e_3$&&&\\
$\mathfrak{a}_{10}$&$\nabla_{e_4}e_4=e_1$&&&&&\\\hline
$\ell_1$&$\nabla_{e_1}e_1=e_3$&$\nabla_{e_2}e_1=-e_3$&$\nabla_{e_2}e_4=e_3$&$\nabla_{e_4}e_2=e_3$&&\\
$\ell_2$&$\nabla_{e_2}e_1=-e_3$&$\nabla_{e_2}e_4=e_3$&$\nabla_{e_4}e_2=e_3$&&&\\
$\ell_3$&$\nabla_{e_1}e_1=e_3$&$\nabla_{e_2}e_1=-e_3$&$\nabla_{e_2}e_2=\frac{t+1}{4}e_3$&$\nabla_{e_4}e_4=e_3$&&$t\in\R^{+}$\\
$\ell_4$&$\nabla_{e_1}e_1=e_3$&$\nabla_{e_2}e_1=-e_3$&$\nabla_{e_2}e_2=te_3$&$\nabla_{e_4}e_4=-e_3$&&$t\in\R^{+}$\\
$\ell_5$&$\nabla_{e_1}e_2=e_3$&$\nabla_{e_2}e_2=e_1$&$\nabla_{e_4}e_4=e_1$&&&\\
$\ell_6$&$\nabla_{e_1}e_2=e_3$&$\nabla_{e_2}e_2=e_1$&$\nabla_{e_4}e_4=e_1+e_3$&&&\\
$\ell_7$&$\nabla_{e_1}e_2=e_3$&$\nabla_{e_2}e_2=e_1$&$\nabla_{e_2}e_4=e_3$&$\nabla_{e_4}e_2=e_3$&$\nabla_{e_4}e_4=e_1+te_3$&$t\in\R^{+}$\\
$\ell_8$&$\nabla_{e_1}e_2=e_3$&$\nabla_{e_2}e_2=e_1$&$\nabla_{e_4}e_4=-e_1$&&&\\
$\ell_9$&$\nabla_{e_1}e_2=e_3$&$\nabla_{e_2}e_2=e_1$&$\nabla_{e_2}e_4=e_1$&$\nabla_{e_4}e_2=e_1$&&\\
$\ell_{10}$&$\nabla_{e_1}e_2=e_3$&$\nabla_{e_2}e_2=e_1$&$\nabla_{e_2}e_4=e_1$&$\nabla_{e_4}e_2=e_1$&$\nabla_{e_4}e_4=-e_3$&\\
$\ell_{11}$&$\nabla_{e_1}e_2=e_3$&$\nabla_{e_2}e_2=e_1$&$\nabla_{e_4}e_4=-e_1+e_3$&&&\\
$\ell_{12}$&$\nabla_{e_1}e_2=e_3$&$\nabla_{e_2}e_2=e_1$&$\nabla_{e_2}e_4=e_3$&$\nabla_{e_4}e_2=e_3$&$\nabla_{e_4}e_4=-e_1+te_3$&$t\in\R$\\
$\ell_{13}$&$\nabla_{e_1}e_2=e_3$&$\nabla_{e_4}e_4=e_1$&&&&\\
$\ell_{14}$&$\nabla_{e_1}e_2=e_3$&$\nabla_{e_2}e_4=e_3$&$\nabla_{e_4}e_2=e_3$&$\nabla_{e_4}e_4=e_1$&&\\
$\ell_{15}$&$\nabla_{e_1}e_2=e_3$&$\nabla_{e_2}e_2=e_1$&$\nabla_{e_4}e_4=e_3$&&&\\
$\ell_{16}$&$\nabla_{e_1}e_1=e_2$&$\nabla_{e_1}e_4=e_3$&$\nabla_{e_2}e_1=-e_3$&$\nabla_{e_4}e_1=e_3$&&\\
$\ell_{17}$&$\nabla_{e_1}e_1=e_2$&$\nabla_{e_1}e_2=e_3$&$\nabla_{e_1}e_4=te_2$&$\nabla_{e_2}e_4=te_3$&$\nabla_{e_4}e_1=te_2$&$t\in\R$\\
&$\nabla_{e_4}e_2=te_3$&$\nabla_{e_4}e_4=t^2e_2+e_3$&&&&\\
$\ell_{18}$&$\nabla_{e_1}e_4=te_3$&$\nabla_{e_2}e_1=-e_3$&$\nabla_{e_2}e_2=e_1$&$\nabla_{e_2}e_4=-te_1+e_3$&$\nabla_{e_4}e_1=te_3$&$t\in\R$\\
&$\nabla_{e_4}e_2=-te_1+e_3$&$\nabla_{e_4}e_4=t^2e_1-3te_3$&&&&\\
$\ell_{19}$&$\nabla_{e_1}e_1=e_2$&$\nabla_{e_1}e_3=e_4$&$\nabla_{e_2}e_1=-e_3$&$\nabla_{e_2}e_2=-2e_4$&$\nabla_{e_3}e_1=e_4$&\\
$\ell_{20}$&$\nabla_{e_2}e_1=-e_3$&$\nabla_{e_2}e_2=e_4$&$\nabla_{e_2}e_4=e_1$&$\nabla_{e_4}e_2=e_1$&$\nabla_{e_4}e_4=-e_3$&\\
$\ell_{21}$&$\nabla_{e_1}e_1=e_3$&$\nabla_{e_1}e_2=e_4$&$\nabla_{e_2}e_1=-e_3+e_4$&$\nabla_{e_2}e_2=e_1$&$\nabla_{e_2}e_4=e_3$&\\
&$\nabla_{e_4}e_2=e_3$&&&&&\\
$\ell_{22}$&$\nabla_{e_1}e_1=e_4$&$\nabla_{e_1}e_2=\mu e_3$&$\nabla_{e_1}e_4=e_2+(1-\mu)e_3$&$\nabla_{e_2}e_1=(\mu-1)e_3$&$\nabla_{e_4}e_1=e_2+(1-\mu)e_3$&$\mu\in\R$\\
&$\nabla_{e_4}e_4=\mu e_3$&&&&&\\
$\ell_{23}$&$\nabla_{e_1}e_1=e_2$&$\nabla_{e_1}e_2=\frac{\mu}{\mu-1}e_3$&$\nabla_{e_1}e_3=(\mu-1)e_4$&$\nabla_{e_2}e_1=\frac{1}{\mu-1}e_3$&$\nabla_{e_2}e_2=(2-\mu)e_4$&$\mu\in\R$,~$\mu\neq1$\\
&$\nabla_{e_3}e_1=(\mu-1)e_4$&&&&&\\
$\ell_{24}$&$\nabla_{e_1}e_1=e_2$&$\nabla_{e_1}e_2=e_3$&$\nabla_{e_1}e_3=e_4$&$\nabla_{e_2}e_2=-e_4$&$\nabla_{e_3}e_1=e_4$&\\
$\ell_{25}$&$\nabla_{e_1}e_1=e_2$&$\nabla_{e_1}e_2=e_3+e_4$&$\nabla_{e_1}e_3=e_4$&$\nabla_{e_2}e_1=e_4$&$\nabla_{e_2}e_2=-e_4$&\\
&$\nabla_{e_3}e_1=e_4$&&&&&\\
$\ell_{26}$&$\nabla_{e_1}e_2=\frac{1}{2}e_3$&$\nabla_{e_2}e_1=-\frac{1}{2}e_3$&&&&\\
$\ell_{27}$&$\nabla_{e_1}e_1=e_3$&$\nabla_{e_1}e_2=\frac{1}{2}e_3$&$\nabla_{e_2}e_1=-\frac{1}{2}e_3$&&&\\
$\ell_{28}$&$\nabla_{e_1}e_2=\frac{1}{2}e_3$&$\nabla_{e_2}e_1=-\frac{1}{2}e_3$&$\nabla_{e_2}e_2=e_4$&&&\\
$\ell_{29}$&$\nabla_{e_1}e_1=e_4$&$\nabla_{e_1}e_2=\frac{1}{2}e_3$&$\nabla_{e_2}e_2=-\frac{1}{2}e_3$&$\nabla_{e_2}e_2=e_4$&&\\
$\ell_{30}$&$\nabla_{e_1}e_1=-e_4$&$\nabla_{e_1}e_2=\frac{1}{2}e_3$&$\nabla_{e_2}e_2=-\frac{1}{2}e_3$&$\nabla_{e_2}e_2=e_4$&&\\
$\ell_{31}$&$\nabla_{e_1}e_1=\mu e_3$&$\nabla_{e_1}e_2=\frac{1}{2}e_3$&$\nabla_{e_2}e_1=-\frac{1}{2}e_3$&$\nabla_{e_2}e_2=e_3$&&$\mu\in\R^{\ast+}$&\\
$\ell_{33}$&$\nabla_{e_1}e_1=\mu e_3$&$\nabla_{e_1}e_2=\frac{1}{2}e_3$&$\nabla_{e_2}e_1=-\frac{1}{2}e_3$&$\nabla_{e_2}e_2=-e_3$&&$\mu\in\R^{\ast+}$\\
$\ell_{34}$&$\nabla_{e_1}e_1=e_4$&$\nabla_{e_1}e_2=\mu e_3$&$\nabla_{e_2}e_1=(\mu-1)e_3$&$\nabla_{e_2}e_2=e_4$&&$\mu>\frac{1}{2}$\\
$\ell_{35}$&$\nabla_{e_1}e_1=e_4$&$\nabla_{e_1}e_2=\frac{1}{2}e_3$&$\nabla_{e_2}e_1=-\frac{1}{2}e_3$&$\nabla_{e_2}e_2=\frac{1}{2}e_3-te_4$&&$t\in\R^{\ast+}$\\
$\ell_{36}$&$\nabla_{e_1}e_2=e_4$&$\nabla_{e_2}e_1=-e_3+e_4$&$\nabla_{e_2}e_2=e_3$&&&\\
$\ell_{37}$&$\nabla_{e_1}e_2=\mu e_3$&$\nabla_{e_2}e_1=(\mu-1)e_3$&$\nabla_{e_2}e_2=e_4$&&&$\mu<\frac{1}{2}$\\
$\ell_{38}$&$\nabla_{e_1}e_1=e_2$&$\nabla_{e_1}e_2=e_3$&&&&\\
$\ell_{39}$&$\nabla_{e_1}e_1=e_2$&$\nabla_{e_1}e_2=e_4$&$\nabla_{e_2}e_1=-e_3+e_4$&&&\\
$\ell_{40}$&$\nabla_{e_1}e_1=e_2$&$\nabla_{e_1}e_2=\mu e_3$&$\nabla_{e_2}e_1=(\mu-1)e_3$&&&$\mu\in\R$\\\hline
$\mathfrak{t}_1$&$\nabla_{e_1}e_3=e_4$&$\nabla_{e_2}e_2=-e_4$&$\nabla_{e_3}e_1=e_4$&$\nabla_{e_4}e_1=e_2$&$\nabla_{e_4}e_2=e_3$&\\
$\mathfrak{t}_2$&$\nabla_{e_1}e_1=-e_3$&$\nabla_{e_1}e_3=e_4$&$\nabla_{e_2}e_2=-e_4$&$\nabla_{e_3}e_1=e_4$&$\nabla_{e_4}e_1=e_2$&&\\
&$\nabla_{e_4}e_2=e_3$&&&&&\\
$\T_3$&$\nabla_{e_1}e_1=(\mu+9)e_2$&$\nabla_{e_1}e_2=4e_3$&$\nabla_{e_1}e_4=-2e_2$&$\nabla_{e_2}e_1=4e_3$&$\nabla_{e_2}e_4=-e_3$&$\mu\in\R$\\
&$\nabla_{e_4}e_1=-e_2$&$\nabla_{e_4}e_4=\frac{1}{4}e_2$&&&&\\
$\T_4$&$\nabla_{e_1}e_1=(\mu+9)e_2+e_3$&$\nabla_{e_1}e_2=4e_3$&$\nabla_{e_1}e_4=-2e_2$&$\nabla_{e_2}e_1=4e_3$&$\nabla_{e_2}e_4=-e_3$&$\mu\in\R$\\
&$\nabla_{e_4}e_1=-e_2$&$\nabla_{e_4}e_4=\frac{1}{4}e_2$&&&&\\
$\T_5$&$\nabla_{e_1}e_1=(\mu+9)e_2+\mu_1e_3$&$\nabla_{e_1}e_2=4e_3$&$\nabla_{e_1}e_4=-2e_2$&$\nabla_{e_2}e_1=4e_3$&$\nabla_{e_2}e_4=-e_3$&$\mu,\mu_1\in\R$\\
&$\nabla_{e_4}e_1=-e_2$&$\nabla_{e_4}e_4=\frac{1}{4}e_2+e_3$&&&&\\
$\T_6$&$\nabla_{e_1}e_4=-\frac{1}{2}e_2$&$\nabla_{e_2}e_4=-\frac{1}{2}e_3$&$\nabla_{e_4}e_1=\frac{1}{2}e_2$&$\nabla_{e_4}e_2=\frac{2}{3}e_3$&&\\
$\T_7$&$\nabla_{e_1}e_1=e_3$&$\nabla_{e_1}e_4=-\frac{1}{2}e_2$&$\nabla_{e_2}e_4=-\frac{1}{2}e_3$&$\nabla_{e_4}e_1=\frac{1}{2}e_2$&$\nabla_{e_4}e_2=\frac{2}{3}e_3$&\\
$\T_8$&$\nabla_{e_1}e_1=e_4$&$\nabla_{e_4}e_1=e_2$&$\nabla_{e_4}e_2=e_3$&&&\\
$\T_9$&$\nabla_{e_1}e_1=e_4$&$\nabla_{e_1}e_4=e_3$&$\nabla_{e_4}e_1=e_2+e_3$&$\nabla_{e_4}e_2=e_3$&&\\
$\T_{10}$&$\nabla_{e_1}e_1=e_4$&$\nabla_{e_1}e_4=-e_3$&$\nabla_{e_4}e_1=e_2-e_3$&$\nabla_{e_4}e_2=e_3$&&\\
$\T_{11}$&$\nabla_{e_1}e_4=-e_2$&$\nabla_{e_2}e_4=-\frac{1}{2}e_3$&$\nabla_{e_4}e_2=\frac{1}{2}e_3$&$\nabla_{e_4}e_4=e_1$&&\\
$\T_{12}$&$\nabla_{e_1}e_1=\frac{1}{3}e_3$&$\nabla_{e_1}e_4=-e_2+\frac{1}{3}e_3$&$\nabla_{e_2}e_4=-\frac{2}{3}e_3$&$\nabla_{e_4}e_1=\frac{1}{3}e_3$&$\nabla_{e_4}e_2=\frac{1}{3}e_3$&\\
&$\nabla_{e_4}e_4=e_1$&&&&&\\
$\T_{13}$&$\nabla_{e_1}e_1=\mu e_3$&$\nabla_{e_1}e_4=-e_2$&$\nabla_{e_2}e_4=-\frac{\mu+1}{2}e_3$&$\nabla_{e_4}e_2=\frac{1-\mu}{2}e_3$&$\nabla_{e_4}e_4=e_1$&$\mu\in\R^\ast$\\
$\T_{14}$&$\nabla_{e_1}e_1=e_4$&$\nabla_{e_1}e_2=e_3$&$\nabla_{e_1}e_4=(\mu+2)e_3$&$\nabla_{e_2}e_1=e_3$&$\nabla_{e_4}e_1=e_2+(\mu+2)e_3$&$\mu\in\R$\\
&$\nabla_{e_4}e_2=e_3$&$\nabla_{e_4}e_4=2e_3$&&&&\\
$\T_{15}$&$\nabla_{e_1}e_1=\mu e_3$&$\nabla_{e_1}e_4=\mu e_2$&$\nabla_{e_4}e_1=(\mu+1)e_2$&$\nabla_{e_4}e_2=e_3$&$\nabla_{e_4}e_4=e_1$&$\mu\in\R^\ast$\\
$\T_{16}$&$\nabla_{e_1}e_1=\frac{\mu}{3}(2\mu+1)e_3$&$\nabla_{e_1}e_4=\mu e_2-\frac{2\mu^3}{3}e_3$&$\nabla_{e_2}e_4=\frac{2\mu}{3}e_3$&$\nabla_{e_4}e_1=(\mu+1)e_2-\frac{2\mu^3}{3}e_3$&$\nabla_{e_4}e_2=(\frac{2\mu}{3}+1)e_3$&$\mu\in\R^\ast$\\
&$\nabla_{e_4}e_4=e_1$&&&&&\\
$\T_{17}$&$\nabla_{e_1}e_1=\mu_1e_3$&$\nabla_{e_1}e_4=\mu e_2$&$\nabla_{e_4}e_2=\frac{\mu_1-\mu}{\mu-1}e_3$&$\nabla_{e_4}e_1=(\mu+1)e_2$&$\nabla_{e_4}e_2=\frac{\mu_1-1}{\mu-1}e_3$&$\mu,\mu_1\in\R$\\
&$\nabla_{e_4}e_4=e_1$&$\mu\neq0,1$,~$\mu_1\neq\frac{\mu(2\mu+1)}{3}$&&&&\\
$\T_{18}$&$\nabla_{e_4}e_1=e_2$&$\nabla_{e_4}e_2=e_3$&$\nabla_{e_4}e_4=e_1$&&&&\\
$\T_{19}$&$\nabla_{e_1}e_4=e_3$&$\nabla_{e_4}e_1=e_2+e_3$&$\nabla_{e_4}e_2=e_3$&$\nabla_{e_4}e_4=e_1$&&&\\
$\T_{20}$&$\nabla_{e_1}e_1=\mu e_3$&$\nabla_{e_2}e_3=-\mu e_3$&$\nabla_{e_4}e_1=e_2$&$\nabla_{e_4}e_2=(1-\mu)e_3$&$\nabla_{e_4}e_4=e_1$&$\mu\in\R^\ast$&\\
$\T_{21}$&$\nabla_{e_1}e_1=\mu e_3$&$\nabla_{e_1}e_4=-\mu e_3$&$\nabla_{e_2}e_4=-\mu e_3$&$\nabla_{e_4}e_1=e_2-\mu e_3$&$\nabla_{e_4}e_2=(1-\mu)e_3$&$\mu\in\R^\ast$&\\
&$\nabla_{e_4}e_4=e_1$&&&&&&
\\\hline
\end{longtable}
			\end{small}	
			}	
			\end{center}
\end{pr}
\begin{proof}
As noted in Remark~\ref{Diffalgebra}, the classification of four-dimensional geodesically complete flat real Lie algebras associated with nilpotent Lie algebras was obtained in \textsc{\cite{Hosh}} using the extension theory of flat Lie algebras. However, the original classification was formulated up to isomorphism, meaning that distinct cases may represent isomorphic Lie algebras. In this proof, we therefore present a refined classification in which isomorphic algebras are explicitly identified, resulting in a unique representative for each isomorphism class.

First, since the abelian Lie algebra has vanishing brackets, we fix its associated flat torsion-free connection and denote it by $(\mathfrak{a}_j,\nabla)$, instead of $\mathrm{A}$, which is the notation used in \textsc{\cite{Hosh}}.

Next, we consider the second Lie algebra, namely the Heisenberg Lie algebra. Its flat torsion-free connections are presented in \textsc{\cite{Hosh}} under different Lie algebra structures, all of which are isomorphic to the Heisenberg Lie algebra. For this reason, we fix the Heisenberg Lie algebra, denoted by $\ell$, and focus on the first case, numbered $5$ in \textsc{\cite{Hosh}}. In the basis $\{e_1,e_2,e_3,e_4\}$, the associated flat torsion-free connection is given by
\begin{align}
	\nabla_{e_2}e_2&=e_1,\quad\nabla_{e_2}e_3=e_1,\quad\nabla_{e_3}e_2=-e_1,\quad\nabla_{e_3}e_4=e_1,\quad\nabla_{e_4}e_3=e_1.
\end{align}
	The underlying Lie algebra structure associated with $\nabla$ is given by
	\begin{align}
		\mathrm{H}_5:&~~[e_2,e_3]=2\,e_1.
	\end{align}
	We observe that the algebra $\mathrm{H}_5$ is in fact isomorphic to the Heisenberg algebra $\ell$, whose only nonvanishing Lie bracket is given by $[e_1,e_2]=e_3$. We define the map
	\[
	\Psi : 	\mathrm{H}_5 \longrightarrow \ell
	\]
	by
	\[
	e_1 \mapsto e_3, \quad e_2 \mapsto e_1+\tfrac{1}{2}e_4, \quad e_3 \mapsto 2\,e_2, \quad e_4 \mapsto \tfrac{1}{2}e_4.
	\]
	It is straightforward to verify that this map defines an isomorphism of Lie algebras. 
	Applying Definition~\ref{Defofnabla}, we then recover the flat torsion-free connection corresponding to the Heisenberg Lie algebra $\ell$. 
	Finally, we obtain
		\begin{align}\label{H5}
	\nabla_{e_1}e_1&=e_3,\quad\nabla_{e_2}e_1=-e_3,\quad\nabla_{e_2}e_4=e_3,\quad\nabla_{e_4}e_2=e_3.
	\end{align}
This flat torsion-free connection is listed in Table~\ref{4flatcomplete} under the algebra $\ell_1$.	We now consider the second example, numbered $8_t$ in \textsc{\cite{Hosh}}.
	The corresponding flat torsion-free connection is given by \begin{align}
		\nabla_{e_2}e_2&=e_1,\quad\nabla_{e_2}e_3=e_1,\quad\nabla_{e_3}e_3=t\,e_1,\quad\nabla_{e_4}e_4=-e_1,\quad t\in\R^+.
	\end{align}
	and the underlying Lie algebra associated with this connection is determined by the following nonvanishing Lie brackets: 
	\begin{align}
			\mathrm{H}_{8_t}:~~[e_2,e_3]=e_1.
	\end{align}
	As mentioned previously, this algebra is in fact isomorphic to the Heisenberg Lie algebra $\ell$. 
	One easily verifies that the map
	\[
	\Psi : 	\mathrm{H}_{8_t} \longrightarrow \ell
	\]
	defined by
	\[
	e_1 \mapsto e_3, \quad e_2 \mapsto e_1, \quad e_3 \mapsto e_1+e_2, \quad e_4 \mapsto e_4
	\]
	is a Lie algebra isomorphism. It therefore remains to recover the corresponding flat torsion-free connection on $\ell$ by applying Definition~\ref{Defofnabla}. We obtain
	\begin{align}\label{H8t}
		\nabla_{e_1}e_1&=e_3,\quad\nabla_{e_2}e_1=-e_3,\quad\nabla_{e_2}e_2=te_3,\quad\nabla_{e_4}e_4=-e_3,\quad t\in\R^+.
	\end{align}
	The flat torsion-free connection corresponding to the algebra $\ell_4$ is given in Table~\ref{4flatcomplete}. 	Observe that the Lie brackets defining the Lie algebra $	\mathrm{H}_5$ differ from those of $	\mathrm{H}_{8_t}$. 
	For this reason, we identify both Lie algebras with the Heisenberg Lie algebra $\ell$. 
	In order to compare these structures and determine whether they are isomorphic as flat Lie algebras, 
	we make use of the automorphism group of $\ell$.
	
	Using the automorphisms given in Appendix~6, one can easily verify that the two connections defined in (\ref{H5}) and (\ref{H8t}) are not isomorphic. The remaining cases are handled similarly, and the appendix provides a complete list of all flat Lie algebras that are isomorphic, as referenced above Definition~\ref{Defofnabla}.
	
\end{proof}

\section{Symplectic  nilpotent Lie algebras with Lagrangian ideal}\label{sec3}
In the first part of this section  we investigate symplectic nilpotent Lie algebras endowed with a Lagrangian ideal. 
    As proved by Baues and Cort\'es (Theorem~7.2.1, \cite{Bause}), every filiform 
    symplectic Lie algebra is a Lagrangian extension of a certain flat filiform Lie algebra 
    $(\mathfrak{h}, \nabla)$. Here, we precisely determine the type of these flat 
    Lie algebras in a more general setting.

\begin{pr}
Let $(\G, \omega)$ be a   symplectic  nilpotent Lie algebra of dimension $2n$, and let $\mathfrak{j}$ be a Lagrangian ideal of $(\G, \omega)$. Then, the quotient Lie algebra $\h=\G/\mathfrak{j}$ is a geodesically complete flat   nilpotent Lie algebra.
\end{pr}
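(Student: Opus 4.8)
The plan is to verify the three asserted properties of $\h=\G/\mathfrak{j}$ in turn, exploiting that the Lagrangian hypothesis forces $\mathfrak{j}^{\perp_\omega}=\mathfrak{j}$. First I would note that a Lagrangian ideal is in particular a normal isotropic ideal with $\mathfrak{j}^{\perp_\omega}=\mathfrak{j}$, so that $\h=\G/\mathfrak{j}^{\perp_\omega}=\G/\mathfrak{j}$. Proposition~\ref{Flatindeuced} then applies directly and equips $\h$ with the torsion-free flat connection $\overline{\nabla}$ determined by
\[
\omega_\h\big(\overline{\nabla}_{\bar x}\bar y,u\big)=-\omega\big(y,[x,u]_\G\big),\qquad \bar x,\bar y\in\h,\ u\in\mathfrak{j},
\]
which settles flatness. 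Nilpotency of $\h$ is then immediate, since a quotient of the nilpotent Lie algebra $\G$ is nilpotent.

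The substantive point is geodesic completeness of $(\h,\overline{\nabla})$. My strategy is to reduce it to the nilpotency of the endomorphisms $\overline{\nabla}_{\bar x}$ and then invoke Theorem~\ref{theo2}. Concretely, I would transport $\overline{\nabla}_{\bar x}$ to the dual space $\mathfrak{j}^\ast$ via the isomorphism $\omega_\h:\h\to\mathfrak{j}^\ast$ of Proposition~\ref{Flatindeuced}(1). Writing $A_x:=\ad_x|_{\mathfrak{j}}:\mathfrak{j}\to\mathfrak{j}$ for the restriction of $\ad_x$ to the ideal $\mathfrak{j}$ (well defined since $\mathfrak{j}$ is an ideal), the defining relation above rewrites as $\omega_\h(\overline{\nabla}_{\bar x}\bar y,u)=-\omega_\h(\bar y,A_x u)$, which says precisely that $\overline{\nabla}_{\bar x}$ corresponds to $-A_x^{\ast}$ under $\omega_\h$. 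Since $\G$ is nilpotent, $\ad_x$ is a nilpotent endomorphism of $\G$; its restriction $A_x$ to the invariant subspace $\mathfrak{j}$ is therefore nilpotent, hence so is its transpose $A_x^{\ast}$, and thus $\overline{\nabla}_{\bar x}$ is nilpotent for every $\bar x\in\h$.

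Having established nilpotency of all the $\overline{\nabla}_{\bar x}$, Theorem~\ref{theo2} yields that the right multiplication $\overline{\varrho}_{\bar x}$ is nilpotent for every $\bar x$ (and independently re-confirms that $\h$ is nilpotent). In particular $\tr(\overline{\varrho}_{\bar x})=0$ for all $\bar x\in\h$, which by the completeness criterion recalled in the introduction is exactly the condition for $\overline{\nabla}$ to be geodesically complete. This finishes the argument.

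I expect the only delicate step to be the identification $\overline{\nabla}_{\bar x}\leftrightarrow -A_x^{\ast}$, which requires care in tracking which slot of $\omega$ is dualized and in using that $\ad_x$ genuinely preserves $\mathfrak{j}$. Alternatively, and more cheaply, one can bypass this computation: by Theorem~\ref{corresp} the triple $(\G,\omega,\mathfrak{j})$ is isomorphic to the Lagrangian extension $\G_{\nabla,\alpha}$ of $(\h,\nabla)$, and then the nilpotency criterion for Lagrangian extensions established above forces both that $\h$ is nilpotent and that $(\h,\nabla)$ is geodesically complete as soon as $\G$ is nilpotent.
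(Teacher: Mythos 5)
Your proof is correct and takes essentially the same route as the paper: both invoke Proposition~\ref{Flatindeuced} to get the flat torsion-free connection on $\h=\G/\mathfrak{j}$, deduce nilpotency of every $\overline{\nabla}_{\bar x}$ from nilpotency of the adjoint operators of the nilpotent algebra $\G$ restricted to $\mathfrak{j}$, and conclude geodesic completeness via Theorem~\ref{theo2} together with the criterion $\tr(\varrho_x)=0$. The only (cosmetic) difference is that the paper proves $\nabla_x^p=0$ through the inductive identity $\omega_\h\big(\nabla_x^k y,u\big)=(-1)^k\omega\big(\tilde y,\ad_{\tilde x}^k u\big)$, whereas you obtain the same fact in one step by noting that the isomorphism $\omega_\h:\h\to\mathfrak{j}^\ast$ conjugates $\overline{\nabla}_{\bar x}$ into $-\big(\ad_x|_{\mathfrak{j}}\big)^{\ast}$ --- the paper's induction is precisely the iterate of your conjugation.
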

\begin{proof}
Let $(\G, \omega)$ be a symplectic nilpotent Lie algebra. Suppose that $\mathfrak{j}$ is a Lagrangian ideal of $(\G, \omega)$. Then, the quotient Lie algebra $\h = \G/\mathfrak{j}$ is nilpotent and, moreover, flat. Recall that by Proposition~\ref{Flatindeuced}, the induced flat torsion-free connection $\nabla$ on $\h$ satisfies
\begin{align}
    \omega_{\h}\big(\nabla_x y, u\big) = -\omega\big(\tilde{y}, [\tilde{x}, u]\big), \quad \text{for all } x, y \in \h, ~u \in \mathfrak{j},
\end{align}
where $\tilde{x}, \tilde{y} \in \G$ denote lifts of $x, y \in \h$, and where $\omega_\h$ is a non-degenerate bilinear pairing between $\h$ and $\mathfrak{j}$, defined by
\begin{align*}
\omega_\h\big(x, u\big) = \omega(\tilde{x}, u) \quad \text{for all } x \in \h, \, u \in \mathfrak{j}.
\end{align*}
We denote by $``\cdot"$ the product associated with the flat, torsion-free connection $\nabla^\omega$  (defined by formula~$(\ref{sympconnection})$), i.e., $\nabla^\omega_{a}b= a \cdot b$ for all $a, b \in \G$. Set $\nabla_x y = x y$. First, observe that for all $x, y \in \h$ and $u \in \mathfrak{j}$, we have
\begin{align*}
\omega\big(\tilde{x}\cdot\tilde{y},u\big)&=-\omega\big(\tilde{y},[\tilde{x},u]\big)=\omega_\h
\big(\nabla_xy,u\big),
\end{align*}
and 
\begin{align*}
\omega\big(\widetilde{xy},u\big)&
=\omega_\h\big(xy,u\big)=-\omega\big(\tilde{y},[\tilde{x},u]\big)=\omega\big(\tilde{x}\cdot\tilde{y},u\big).
\end{align*}
Then,
\begin{align*}
\omega_\h
\big(\nabla^2_xy,u\big)&=\omega_\h
\big(\nabla_x(xy),u\big)\\
&=-\omega\big(\widetilde{xy},[\tilde{x},u]\big)\\
&=-\omega\big(\tilde{x}\cdot\tilde{y},[\tilde{x},u]\big)\\
&=\omega\big(\tilde{y},\mathrm{ad}_{\tilde{x}}^2u\big).
\end{align*}
By induction, we obtain
\begin{align}
\omega_\h
\big(\nabla^k_xy,u\big)&=(-1)^k\omega\big(\tilde{y},\mathrm{ad}_{\tilde{x}}^ku\big),\quad \text{for all } x,y\in\h,~u\in\mathfrak{j},~k\in
\mathbb{N}^\ast.
\end{align}
Since $\G$ is nilpotent, there exists an integer $p \in \mathbb{N}^\ast$ such that $\mathrm{ad}_x^p = 0$ for all $x \in \G$. This implies that
\begin{align}
\omega_{\h} \big( \nabla^p_x y, u \big) = 0 \quad \text{for all } x, y \in \h, \; u \in \mathfrak{j}.
\end{align}
Since $\omega_{\h}$ is nondegenerate, it follows that $\nabla^p_x = 0$ for all $x \in \h$. According to Theorem~\ref{theo2}, given that the endomorphism $\nabla_x \colon \h \to \h$ is nilpotent, 
it follows that $\h$ is nilpotent and the right-multiplication $\varrho_x \colon \h \to \h$, 
$y \mapsto yx$, is nilpotent. Consequently, the flat Lie algebra $(\h, \nabla)$ is geodesically complete.

\end{proof}

Note that any flat quotient $(\h, \nabla)$ of a nilpotent Lie algebra $\G$ is   geodesically complete.
In view of the correspondence theorem~$\ref{corresp}$, the previous proposition shows
that  symplectic nilpotent Lie algebras with Lagrangian ideal arise as Lagrangian extensions of certain flat Lie
algebras $(\h,\nabla)$, where $\h$ is nilpotent. Let $(\h, \nabla, \alpha)$ be a Lagrangian extension triple, where $\h$ is nilpotent and $\nabla$ is geodesically complete. We call $(\h, \nabla, \alpha)$ a \textit{geodesically complete triple} if, in addition to these conditions, the extension $[\h, \nabla, \alpha]$ is geodesically complete (ignoring here the specific requirements on $\alpha$). This leads to the following important consequence:

\begin{co}\label{one-to-one}
The correspondence which associates to a  symplectic nilpotent Lie algebra the extension triple $(\h, \nabla, [\alpha])$,  induces a bijection between isomorphism classes of
 symplectic nilpotent Lie algebras  with Lagrangian ideal and
isomorphism classes of geodesically complete flat nilpotent Lie algebras with Lagrangian extension cohomology class.

\end{co}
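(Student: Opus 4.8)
The plan is to obtain the asserted bijection by restricting the general Lagrangian-extension correspondence of Theorem~\ref{corresp} to the nilpotent subclass on each side. Write $f$ for the bijection of Theorem~\ref{corresp}, sending the isomorphism class of a symplectic Lie algebra with Lagrangian ideal $(\G,\om,\mathfrak{j})$ to the triple $(\h,\nabla,[\alpha]_L)$ with $\h=\G/\mathfrak{j}$. Since $f$ is already a bijection between the full classes, it restricts to a bijection between a subclass $A$ of the source and a subclass $B$ of the target as soon as $f(A)\subseteq B$ and $f^{-1}(B)\subseteq A$. Here $A$ is the class of symplectic \emph{nilpotent} Lie algebras with Lagrangian ideal and $B$ the class of \emph{geodesically complete flat nilpotent} Lie algebras equipped with a Lagrangian extension cohomology class, so it remains to verify these two containments.

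The containment $f(A)\subseteq B$ is exactly the preceding proposition: if $(\G,\om)$ is symplectic nilpotent with Lagrangian ideal $\mathfrak{j}$, then the induced flat Lie algebra $\h=\G/\mathfrak{j}$, with connection $\nabla$ from Proposition~\ref{Flatindeuced}, is nilpotent and geodesically complete. Hence $f$ carries the nilpotent symplectic class into the class of geodesically complete flat nilpotent triples, and this step needs no new argument.

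The main point is the reverse containment $f^{-1}(B)\subseteq A$, namely that the Lagrangian extension $\G_{\nabla,\alpha}$ attached to a geodesically complete flat nilpotent $(\h,\nabla)$ is itself nilpotent for any representative cocycle $\alpha$. I would argue this directly rather than through the explicit cocycle identity of the nilpotency proposition. Since $\h$ is nilpotent and $(\h,\nabla)$ is complete, Scheuneman's Theorem~\ref{theo1} gives that $\nabla_x$ is nilpotent for every $x\in\h$, whence each $\rho(x)=-\nabla_x^\ast$ is a nilpotent endomorphism of $\h^\ast$. By Engel's theorem applied to the representation $\rho$, the operators $\{\rho(x)\}$ are simultaneously strictly triangularizable, so $\rho$ acts nilpotently on $\h^\ast$. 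Now regard $\G_{\nabla,\alpha}=\h\oplus_\rho\h^\ast$ as an extension of $\h$ by the abelian ideal $\h^\ast$: for any $X\in\G_{\nabla,\alpha}$ with $\h$-component $x$, the operator $\ad_X$ preserves $\h^\ast$, inducing $\rho(x)$ on $\h^\ast$ and the adjoint $\ad_x$ of the nilpotent algebra $\h$ on the quotient $\G_{\nabla,\alpha}/\h^\ast\cong\h$; the cocycle $\alpha$ enters only the off-diagonal $\h\to\h^\ast$ block and is irrelevant here. Both diagonal blocks are nilpotent, so $\ad_X$ is nilpotent, and Engel's theorem for Lie algebras then forces $\G_{\nabla,\alpha}$ to be nilpotent. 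Together with the Lagrangian ideal $\h^\ast$, this places $f^{-1}(B)$ inside $A$.

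With both containments established, the restriction $f|_A:A\to B$ is a bijection, which is the assertion of the corollary. The one step requiring genuine input is the nilpotency of the extension above; everything else is either the preceding proposition or a formal restriction of an existing bijection. I expect the only delicate point to be confirming that $\alpha$ plays no role in the nilpotency — equivalently, that condition $(3)$ of the nilpotency proposition is automatic once $\h$ is nilpotent and $\nabla$ is complete — which the block-triangular Engel argument settles cleanly.
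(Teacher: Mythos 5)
Your proof is correct, and its skeleton --- restricting the bijection of Theorem~\ref{corresp} to the nilpotent subclasses, with the forward containment $f(A)\subseteq B$ supplied by the preceding proposition on quotients --- is exactly the paper's implicit derivation of the corollary. Where you genuinely diverge is in the converse containment $f^{-1}(B)\subseteq A$. The paper routes this through its earlier characterization of nilpotent Lagrangian extensions, which lists, besides nilpotency of $\h$ and completeness of $\nabla$, a third cocycle condition $\sum_{j=0}^{p-1}\rho(x)^{j}\alpha\big(x,\ad_x^{p-1-j}(y)\big)=0$, and then absorbs the question into the ad hoc notion of a ``geodesically complete triple''; it never explicitly verifies that \emph{every} extension over a complete flat nilpotent base is nilpotent, which is what the corollary as literally stated needs. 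Your block-triangular Engel argument supplies precisely this: $\h^\ast$ is an abelian ideal on which $\ad_{x+\xi}$ acts by $\rho(x)=-\nabla_x^\ast$, nilpotent by Scheuneman's Theorem~\ref{theo1}, while the induced action on the quotient $\G_{\nabla,\alpha}/\h^\ast\cong\h$ is $\ad_x^{\h}$, nilpotent since $\h$ is; hence every $\ad_X$ is nilpotent regardless of $\alpha$, and Engel's theorem concludes. This both proves the corollary as stated and exposes the paper's condition $(3)$ as redundant --- consistently, one can check this on the paper's own induction formula: taking $p\geq 2\dim\h$, each summand contains either $\rho(x)^{j}$ with $j\geq\dim\h$ or $\ad_x^{p-1-j}$ with $p-1-j\geq\dim\h$, so it vanishes termwise. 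So your route buys a cleaner and slightly stronger statement (nilpotency of the extension is automatic, with $\h^\ast$ the required Lagrangian ideal), at no extra cost. One cosmetic remark: the appeal to simultaneous strict triangularization of $\{\rho(x)\}$ via Engel's theorem for representations is superfluous --- pointwise nilpotency of each $\rho(x)$ is all the block argument uses.
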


Using Proposition~$\ref{cohooro}$, we obtain a complete classification of the eight-dimensional nilpotent Lie algebras that admit a Lagrangian ideal. We refer to $\cite{AEM}$ for an illustration of the calculation in low dimensions.

\begin{pr}\label{prSymbracket}
Let $(\G_{\nabla,\alpha},\omega)$ be the lagrangian extension of a four-dimensional geodesically complete   flat nilpotent Lie algebra $(\h,\nabla)$ with respect to $\alpha\in Z^2_\rho(\h,\h^\ast)$. Then,  $(\G,\omega)$ is one of the following Lie algebras$:$
\newgeometry{left=0.2cm,right=0.2cm,top=3cm,bottom=2.8cm}

{\renewcommand*{\arraystretch}{1.5}
\captionof{table}{Lagrangian extension of geodesically complete Flat nilpotent Lie algebras.}\label{Symbracket}
\setcounter{table}{1}
\begin{small} 
\setlength{\tabcolsep}{2pt} 
\begin{longtable}{@{}cllllllc@{}} 
			\hline
		Algebra&&&\\
			\hline
$\G_{8,1}$&$[e_1,e_2]=ae_7+be_8$&$[e_1,e_3]=ae_6+(c-d)e_7+\lambda_1 e_8$&$[e_1,e_4]=be_6+\lambda_1 e_7+(c-\lambda_2)e_8$\\&$[e_2,e_3]=(\lambda_3-\lambda_4)e_8$&$[e_2,e_4]=-\lambda_4 e_7+\eta_1 e_8$&$[e_3,e_4]=-\lambda_3 e_6+\eta_2 e_7+\eta_3 e_8$\\
&$[e_2,e_5]=-e_6$&$[e_3,e_5]=-e_7$&$[e_4,e_5]=-e_8$\\
&$a,b,c,d\in\R$, $\lambda_j,\eta_j\in\R$&$\mathcal{C}^2(\G)=0$&\\
$\G_{8,2}$&$[e_1,e_2]=ae_7+be_8$&$[e_1,e_3]=ae_6+(c-d)e_7+\lambda_1 e_8$&$[e_1,e_4]=be_6+\lambda_1 e_7-(c+\lambda_2)e_8$\\
&$[e_2,e_3]=(\lambda_3-\lambda_4)e_8$
&$[e_2,e_4]=-\lambda_4 e_7+\eta_1 e_8$&$[e_3,e_4]=-\lambda_3 e_6+\eta_2 e_7+\eta_3 e_8$\\
&$[e_2,e_5]=-e_6$&$[e_3,e_5]=-e_7$&$[e_4,e_5]=e_8$\\
&$a,b,c,d\in\R$, $\lambda_j,\eta_j\in\R$&$\mathcal{C}^2(\G)=0$&\\
$\G_{8,3}$&$[e_1,e_2]=ae_7$&$[e_1,e_3]=ae_6+be_7$&$[e_1,e_4]=ae_5+be_6+ce_7$\\
&$[e_2,e_3]=be_6$
&$[e_2,e_4]=be_5+ce_6$&$[e_3,e_4]=ce_5$\\
&$[e_2,e_5]=-e_8$&$[e_3,e_5]=-e_7$&$[e_3,e_6]=-e_8$\\
&$[e_4,e_5]=-e_6$&$[e_4,e_6]=-e_7$&$[e_4,e_8]=-e_8$\\
&$a,b,c\in\R$&$\mathcal{C}^5(\G)=0$&&\\
$\G_{8,4}$&$[e_1,e_2]=ae_6+be_7+ce_8$&$[e_1,e_3]=be_6+(\lambda_1-d)e_8$&$[e_1,e_4]=ce_6-de_7+\lambda_2e_8$\\
&$[e_2,e_3]=\lambda_3e_6+\lambda_4e_8$
&$[e_2,e_4]=\eta_1 e_6+(\eta_2+\lambda_4)e_7+\eta_3e_8$&$[e_3,e_4]=-\lambda_1e_5+\eta_2e_6$\\
&$[e_3,e_5]=-e_7$&$[e_4,e_5]=-e_8$&$a,b,c,d\in\R$,~$\lambda_j,\eta_j\in\R$\\
&$\mathcal{C}^3(\G)=0$&&\\
$\G_{8,5}$&$[e_1,e_2]=ae_6+be_7+ce_8$&$[e_1,e_3]=be_6+(\lambda_1-d)e_8$&$[e_1,e_4]=ce_6-de_7+\lambda_2e_8$\\
&$[e_2,e_3]=\lambda_3e_6+\lambda_4e_8$
&$[e_2,e_4]=\eta_1 e_6+(\eta_2+\lambda_4)e_7+\eta_3e_8$&$[e_3,e_4]=-\lambda_1e_5+\eta_2e_6$\\
&$[e_3,e_5]=-e_7$&$[e_4,e_5]=e_8$&$a,b,c,d\in\R$,~$\lambda_j,\eta_j\in\R$\\
&$\mathcal{C}^3(\G)=0$&&\\
$\G_{8,6}$&$[e_1,e_2]=ae_7+be_8$&$[e_1,e_3]=(b-c)e_5+de_7$&$[e_1,e_4]=\lambda_1e_5+(b-c)e_6+\lambda_2e_7$\\
&$[e_2,e_3]=-ae_5+\lambda_3e_7$
&$[e_2,e_4]=-ce_5-ae_6+\lambda_4e_7$&$[e_3,e_4]=\lambda_2e_5+\lambda_4e_6$\\
&$[e_3,e_6]=-e_8$&$[e_4,e_5]=-e_8$&$[e_4,e_6]=-e_7$\\
&$a,b,c,d\in\R$,~$\lambda_j\in\R$&$\mathcal{C}^3(\G)=0$\\
$\G_{8,7}$&$[e_1,e_2]=ae_7+be_8$&$[e_1,e_3]=ce_5+de_8$&$[e_1,e_4]=be_6+(d-\lambda_1)e_7$\\
&$[e_2,e_3]=-ae_5+\lambda_2e_8$
&$[e_2,e_4]=\lambda_3e_6+(\lambda_2-\lambda_4)e_7$&$[e_3,e_4]=-\lambda_1e_5-\lambda_4e_6$\\
&$[e_3,e_5]=-e_7$&$[e_4,e_6]=-e_8$&$a,b,c,d\in\R$,~$\lambda_j\in\R$\\
&$\mathcal{C}^3(\G)=0$&&\\
$\G_{8,8}$&$[e_1,e_2]=ae_7+be_8$&$[e_1,e_3]=ce_5+(a+d)e_6$&$[e_1,e_4]=(a+d)e_5+\lambda_1e_7+\lambda_2e_8$\\
&$[e_2,e_3]=de_5-be_6$
&$[e_2,e_4]=-be_5+\lambda_3e_7+\lambda_4e_8$&$[e_3,e_4]=\lambda_1e_5+\lambda_3e_6$\\
&$[e_3,e_5]=-e_7$&$[e_3,e_6]=-e_8$&$[e_4,e_6]=-e_7$\\
&$a,b,c,d\in\R$,~$\lambda_j\in\R$&$\mathcal{C}^3(\G)=0$&\\
$\G_{8,9}$&$[e_1,e_2]=ae_6+be_8$&$[e_1,e_3]=ce_7$&$[e_1,e_4]=ce_5+(b+d)e_6+\lambda_1e_7$\\
&$[e_2,e_3]=\lambda_2e_6+de_7$
&$[e_2,e_4]=de_5+\lambda_3e_6+\lambda_4e_7$&$[e_3,e_4]=\lambda_1e_5+\lambda_4e_6$\\
&$[e_3,e_5]=-e_8$&$[e_4,e_5]=-e_7$&$[e_4,e_7]=-e_8$\\
&$a,b,c,d\in\R$,~$\lambda_j\in\R$&$\mathcal{C}^4(\G)=0$&\\
$\G_{8,10}$&$[e_1,e_2]=ae_6+be_7+ce_8$&$[e_1,e_3]=be_6+de_7+\lambda_1e_8$&$[e_1,e_4]=\lambda_2e_5+(c+\lambda_3)e_6+(\lambda_1+\lambda_4)e_7$\\
&$[e_2,e_3]=\eta_1e_6+\eta_2e_7+\eta_3e_8$
&$[e_2,e_4]=\lambda_3e_5+\eta_4e_6+(\eta_3+\gamma_1)e_7$&$[e_3,e_4]=\lambda_4e_5+\gamma_1e_6+\gamma_2e_7$\\
&$[e_4,e_5]=-e_8$&$a,b,c,d\in\R$,~$\lambda_j,\gamma_j\in\R$&$\mathcal{C}^3(\G)=0$\\
$\G_{8,11}$&$[e_1,e_2]=e_3$&$[e_1,e_3]=(a+b)e_5+(c-a)e_6+de_8$&$[e_1,e_4]=(d+\lambda_1)e_7+\lambda_2e_8$\\
&$[e_2,e_3]=(c-a)e_5+\lambda_3e_6-(d+2\lambda_1)e_8$
&$[e_2,e_4]=\lambda_4 e_5-(d+2\lambda_1)e_7+\eta e_8$&$[e_3,e_4]=\lambda_1e_5+(d+\lambda_1)e_8$\\
&$[e_1,e_7]=-e_5$&$[e_2,e_7]=e_5-e_8$&$[e_4,e_7]=-e_6$\\
&$a,b,c,d,\eta\in\R$,~$\lambda_j\in\R$&$\mathcal{C}^3(\G)=0$&\\
$\G_{8,12}$&$[e_1,e_2]=e_3$&$[e_1,e_3]=ae_5+be_6+2ce_8$&$[e_1,e_4]=de_5+ce_7+\lambda_1e_8$\\
&$[e_2,e_3]=be_5+\lambda_2e_6+\lambda_3e_8$&
$[e_2,e_4]=\lambda_4e_5+\lambda_3e_7$&$[e_3,e_4]=-ce_5+ce_8$\\
&$[e_2,e_7]=e_5-e_8$&$[e_4,e_7]=-e_6$&$a,b,c,d\in\R$,~$\lambda_j\in\R$\\
&$\mathcal{C}^3(\G)=0$&&&&\\
$\G_{8,13}$&$[e_1,e_2]=e_3$&$[e_1,e_3]=ae_5+be_6+ce_8$&$[e_1,e_4]=de_6+(c+\lambda_1)e_7$\\
&$[e_2,e_3]=be_5+\lambda_2e_6-\frac{1}{4}\big((t+9)\lambda_1+(t+5)c\big)e_8$
&$[e_2,e_4]=\lambda_3e_5+\lambda_4e_6-(c+2\lambda_1)e_7$&$[e_3,e_4]=\lambda_1e_5+\frac{1}{4}(c+\lambda_1)(1+t)e_6$\\
&$[e_1,e_7]=-e_5$&$[e_2,e_7]=e_5-\frac{1}{4}(1+t)e_6$&$[e_4,e_7]=-e_8$\\
&$a,b,c,d\in\R$,~$\lambda_j\in\R$,~$t\in\R^{+}$&$\mathcal{C}^3(\G)=0$&\\
$\G_{8,14}$&$[e_1,e_2]=e_3$&$[e_1,e_3]=ae_5+be_6+ce_8$&$[e_1,e_4]=de_6+(c+\lambda_1)e_7$\\
&$[e_2,e_3]=be_5+\lambda_2e_6-\big((t+2)\lambda_1+(t+1)c\big)e_8$
&$[e_2,e_4]=\lambda_3e_5+\lambda_4e_6-(c+2\lambda_1)e_7$&$[e_3,e_4]=\lambda_1e_5+t(c+\lambda_1)e_6$\\
&$[e_1,e_7]=-e_5$&$[e_2,e_7]=e_5-te_6$&$[e_4,e_7]=e_8$\\
&$a,b,c,d\in\R$,~$\lambda_j\in\R$,~$t\in\R^\ast$&$\mathcal{C}^3(\G)=0$&\\
$\G_{8,15}$&$[e_1,e_2]=e_3$&$[e_1,e_3]=ae_7+be_8$&$[e_1,e_4]=ce_5+be_7$\\
&$[e_2,e_3]=de_6+(2\lambda_1-c)e_8$
&$[e_2,e_4]=\lambda_2e_5+\lambda_1e_7$&$[e_3,e_4]=(c-\lambda_1)e_6$\\
&$[e_1,e_7]=-e_6$&$[e_2,e_5]=-e_6$&$[e_4,e_5]=-e_8$\\
&$a,b,c,d\in\R$,~$\lambda_j\in\R$&$\mathcal{C}^4(\G)=0$&\\
$\G_{8,16}$&$[e_1,e_2]=e_3$&$[e_1,e_3]=a(e_5-e_6-e_7)+be_8$&$[e_1,e_4]=ce_5+be_7$\\
&$[e_2,e_3]=-ae_5+de_6+ae_7+(2\lambda_1-c)e_8$
&$[e_2,e_4]=\lambda_2e_5+\lambda_1e_7$&$[e_3,e_4]=(c-\lambda_1)e_6$\\
&$[e_1,e_7]=-e_6$&$[e_2,e_5]=-e_6$&$[e_4,e_5]=-e_8$\\
&$[e_4,e_7]=-e_8$&$a,b,c,d\in\R$,~$\lambda_j\in\R$&$\mathcal{C}^4(\G)=0$\\
$\G_{8,17}$&$[e_1,e_2]=e_3$&$[e_1,e_3]=ae_8$&$[e_1,e_4]=be_5+ae_7$\\
&$[e_2,e_3]=ce_6+(2d-b)e_8$
&$[e_2,e_4]=\lambda_1e_5+de_7$&$[e_3,e_4]=(b-d)e_6+ae_8$\\
&$[e_1,e_7]=-e_6$&$[e_2,e_5]=-e_6$&$[e_2,e_7]=-e_8$\\
&$[e_4,e_5]=-e_8$&$[e_4,e_7]=-e_6-te_8$&$a,b,c,d\in\R$,~$\lambda_j\in\R$,~$t\in\R^{+}$\\
&$\mathcal{C}^3(\G)=0$&&\\
$\G_{8,18}$&$[e_1,e_2]=e_3$&$[e_1,e_3]=ae_7+be_8$&$[e_1,e_4]=ce_5+be_7$\\
&$[e_2,e_3]=de_6+(2\lambda_1-c)e_8$
&$[e_2,e_4]=\lambda_2e_5+\lambda_1e_7$&$[e_3,e_4]=(c-\lambda_1)e_6$\\
&$[e_1,e_7]=-e_6$&$[e_2,e_5]=-e_6$&$[e_4,e_5]=e_8$\\
&$a,b,c,d\in\R$,~$\lambda_j\in\R$&$\mathcal{C}^4(\G)=0$&\\
$\G_{8,19}$&$[e_1,e_2]=e_3$&$[e_1,e_3]=ae_7+be_8$&$[e_1,e_4]=ce_5+be_7+de_8$\\
&$[e_2,e_3]=\lambda_1e_6+(2\lambda_2-c)e_8$
&$[e_2,e_4]=\lambda_3e_5+\lambda_2e_7$&$[e_3,e_4]=(c-\lambda_2)e_6+ce_8$\\
&$[e_1,e_7]=-e_6$&$[e_2,e_5]=-e_6-e_8$&$[e_4,e_5]=-e_6$&\\
&$a,b,c,d\in\R$,~$\lambda_j\in\R$&$\mathcal{C}^4(\G)=0$&\\
$\G_{8,20}$&$[e_1,e_2]=e_3$&$[e_1,e_3]=ae_8$&$[e_1,e_4]=be_5+ae_7$\\
&$[e_2,e_3]=ce_6+(2d-b)e_8$
&$[e_2,e_4]=\lambda e_5+de_7$&$[e_3,e_4]=(b-d)e_6+be_8$\\
&$[e_1,e_7]=-e_6$&$[e_2,e_5]=-e_6-e_8$&$[e_4,e_5]=-e_6$&\\
&$[e_4,e_7]=e_8$&$a,b,c,d,\lambda\in\R$&$\mathcal{C}^3(\G)=0$\\
$\G_{8,21}$&$[e_1,e_2]=e_3$&$[e_1,e_3]=a(e_5+e_6+e_7)+be_8$&$[e_1,e_4]=ce_5+be_7$\\
&$[e_2,e_3]=ae_5+de_6+ae_7+(2\lambda_1-c)e_8$
&$[e_2,e_4]=\lambda_2e_5+\lambda_1e_7$&$[e_3,e_4]=(c-\lambda_1)e_6$\\
&$[e_1,e_7]=-e_6$&$[e_2,e_5]=-e_6$&$[e_4,e_5]=e_8$\\
&$[e_4,e_6]=-e_8$&$a,b,c,d\in\R$,~$\lambda_j\in\R$&$\mathcal{C}^4(\G)=0$\\
$\G_{8,22}$&$[e_1,e_2]=e_3$&$[e_1,e_3]=ae_8$&$[e_1,e_4]=ce_5+ae_7$\\
&$[e_2,e_3]=de_6+(2\lambda_1-c)e_8$
&$[e_2,e_4]=\lambda_2e_5+\lambda_1e_7$&$[e_3,e_4]=(c-\lambda_1)e_6+ae_8$\\
&$[e_1,e_7]=-e_6$&$[e_2,e_5]=-e_6$&$[e_2,e_7]=-e_8$\\
&$[e_4,e_5]=e_8$&$[e_4,e_7]=-e_6-te_8$&$a,b,c,d\in\R$,~$\lambda_j\in\R$,~$t\in\R$\\
&$\mathcal{C}^3(\G)=0$&&\\
$\G_{8,23}$&$[e_1,e_2]=e_3$&$[e_1,e_3]=ae_7+be_8$&$[e_1,e_4]=ce_5+be_7$\\
&$[e_2,e_3]=de_6+2\lambda_1e_8$
&$[e_2,e_4]=\lambda_2e_5+\lambda_3e_6+\lambda_1e_7$&$[e_3,e_4]=-\lambda_1e_6$\\
&$[e_1,e_7]=-e_6$&$[e_4,e_5]=-e_8$&$a,b,c,d\in\R$,~$\lambda_j\in\R$\\
&$\mathcal{C}^4(\G)=0$&&\\
$\G_{8,24}$&$[e_1,e_2]=e_3$&$[e_1,e_3]=ae_8$&$[e_1,e_4]=be_5+ae_7$\\
&$[e_2,e_3]=ce_6+2de_8$
&$[e_2,e_4]=\lambda e_5+de_7$&$[e_3,e_4]=-de_6+ae_8$\\
&$[e_1,e_7]=-e_6$&$[e_2,e_7]=-e_8$&$[e_4,e_5]=-e_8$\\
&$[e_4,e_6]=-e_6$&$a,b,c,d,\lambda\in\R$&$\mathcal{C}^3(\G)=0$\\
$\G_{8,25}$&$[e_1,e_2]=e_3$&$[e_1,e_3]=ae_6+be_8$&$[e_1,e_4]=ce_5+be_7$\\
&$[e_2,e_3]=ae_5+de_6+(2\lambda_1-c)e_8$
&$[e_2,e_4]=\lambda_2e_5+\lambda_1e_7$&$[e_3,e_4]=(c-\lambda_1)e_6$\\
&$[e_1,e_7]=-e_6$&$[e_2,e_5]=-e_6$&$[e_4,e_7]=-e_8$\\
&$a,b,c,d\in\R$,~$\lambda_j\in\R$&$\mathcal{C}^4(\G)=0$&\\
$\G_{8,26}$&$[e_1,e_2]=e_3$&$[e_1,e_3]=ae_5+be_6+ce_8$&$[e_1,e_4]=de_6+\lambda_1e_7$\\
&$[e_2,e_3]=be_5+\lambda_2e_8$
&$[e_2,e_4]=(c-2\lambda_1)e_6+\lambda_2e_7+\lambda_3e_8$&$[e_3,e_4]=(\lambda_1-c)e_5-\lambda_2e_8$\\
&$[e_1,e_6]=-e_5$&$[e_1,e_7]=-e_8$&$[e_2,e_7]=e_5$\\
&$[e_4,e_7]=-e_5$&$a,b,c,d\in\R$,~$\lambda_j\in\R$&$\mathcal{C}^4(\G)=0$\\
$\G_{8,27}$&$[e_1,e_2]=e_3$&$[e_1,e_3]=ae_6+be_8$&$[e_1,e_4]=ce_7$\\
&$[e_2,e_3]=ae_5+2tae_8$
&$[e_2,e_4]=de_5+(b-c)e_6+tae_7+\lambda e_8$&$[e_3,e_4]=(c-b)e_5-tae_6+t(2c-b)e_8$\\
&$[e_1,e_6]=-e_5-te_8$&$[e_1,e_7]=-e_6$&$[e_2,e_7]=-te_8$\\
&$[e_4,e_6]=-te_5-t^2e_8$&$[e_4,e_7]=-te_6-e_8$&$a,b,c,d,\lambda,t\in\R$\\
&$\mathcal{C}^4(\G)=0$&&\\
$\G_{8,28}$&$[e_1,e_2]=e_3$&$[e_1,e_3]=ae_6-2tae_8$&$[e_1,e_4]=-tae_7$\\
&$[e_2,e_3]=ae_5+be_6+ce_8$
&$[e_2,e_4]=ce_7+de_8$&$[e_3,e_4]=tae_5-t(a+c)e_8$\\
&$[e_1,e_7]=-te_8$&$[e_2,e_5]=-e_6+te_8$&$[e_2,e_7]=e_5-e_8$\\
&$[e_4,e_5]=te_6-t^2e_8$&$[e_4,e_7]=-te_5-e_6+3te_8$&$a,b,c,d\in\R$,~$t\in\R^\ast$
\\
&$\mathcal{C}^4(\G)=0$&&\\
$\G_{8,29}$&$[e_1,e_2]=e_3$&$[e_1,e_3]=ae_6+2be_8$&$[e_1,e_4]=ce_5+be_7+de_8$\\
&$[e_2,e_3]=ae_5+\lambda_1e_6+be_7+(\lambda_2-c)e_8$
&$[e_2,e_4]=\lambda_2e_7+\lambda_3e_8$&$[e_3,e_4]=-be_5+ce_6+be_8$\\
&$[e_2,e_5]=-e_6$&$[e_2,e_7]=e_5-e_8$&$[e_4,e_7]=-e_6$\\
&$a,b,c,d\in\R$,~$\lambda_j\in\R$&$\mathcal{C}^5(\G)=0$\\
$\G_{8,30}$&$[e_1,e_2]=e_3$&$[e_1,e_3]=ae_6+be_7$&$[e_1,e_4]=ce_6$\\
&$[e_2,e_3]=ae_5-be_6+de_7$
&$[e_2,e_4]=de_8$&$[e_3,e_4]=-de_7$\\
&$[e_1,e_6]=-e_5$&$[e_1,e_8]=-e_7$&$[e_2,e_7]=e_5$\\
&$[e_2,e_8]=2e_6$&$[e_3,e_8]=-e_5$&$a,b,c,d\in\R$\\
&$\mathcal{C}^4(\G)=0$&&\\
$\G_{8,31}$&$[e_1,e_2]=e_3$&$[e_1,e_3]=ae_6$&$[e_1,e_4]=-ae_5$\\
&$[e_2,e_3]=ae_5+be_6+ce_8$
&$[e_2,e_4]=ce_7$&$[e_3,e_4]=-ae_8$\\
&$[e_2,e_5]=-e_8$&$[e_2,e_7]=e_5$&$[e_2,e_8]=-e_6$\\
&$[e_4,e_5]=-e_6$&$[e_4,e_7]=e_8$&$a,b,c\in\R$\\
&$\mathcal{C}^5(\G)=0$&&\\
$\G_{8,32}$&$[e_1,e_2]=e_3$&$[e_1,e_3]=ae_5+be_8$&$[e_1,e_4]=ce_8$\\
&$[e_2,e_3]=be_7+ae_8$&$[e_2,e_4]=de_5+(b+c)e_7+(a-b-c)e_8$&$[e_3,e_4]=-be_5+(-a+b+c)e_6$\\
&$[e_1,e_7]=-e_5$&$[e_1,e_8]=-e_6$&$[e_2,e_5]=-e_6$\\
&$[e_2,e_7]=e_5-e_8$&$[e_2,e_8]=-e_5$&$[e_4,e_7]=-e_6$\\
&$a,b,c,d\in\R$&$\mathcal{C}^6(\G)=0$&\\
$\G_{8,33}$&$[e_1,e_2]=e_3$&$[e_1,e_3]=ae_6$&$[e_1,e_4]=be_7$\\
&$[e_2,e_3]=ae_5$
&$[e_2,e_4]=ce_5+ae_6+b(\mu-2)e_8$&$[e_3,e_4]=be_5-ae_8$\\
&$[e_1,e_6]=-e_8$&$[e_1,e_7]=-\mu e_6+(\mu-1)e_8$&$[e_1,e_8]=-e_5$\\
&$[e_2,e_7]=(1-\mu)e_5$&$[e_4,e_6]=-e_5$&$[e_4,e_7]=(\mu-1)e_5-\mu e_8$
\\&$a,b,c\in\R$,~$\mu\in\R^\ast$&$\mathcal{C}^5(\G)=0$&\\
$\G_{8,34}$&$[e_1,e_2]=e_3$&$[e_1,e_3]=ae_6+be_7+ce_8$&$[e_1,e_4]=de_6+\lambda_1e_7$\\
&$[e_2,e_3]=ae_5+\lambda_2(e_6-e_7)-be_8$
&$[e_2,e_4]=(a+2b)e_6-be_7+(c-2\lambda_1)e_8$&$[e_3,e_4]=(\lambda_1-c)e_5-(a+b)e_8$\\
&$[e_1,e_6]=-e_8$&$[e_1,e_7]=-e_8$&$[e_1,e_8]=-e_5$\\
&$[e_2,e_7]=e_5$&$[e_4,e_6]=-e_5$&$[e_4,e_7]=-e_5$\\
&$a,b,c,d\in\R$,~$\lambda_j\in\R$
&$\mathcal{C}^5(\G)=0$&&\\
$\G_{8,35}$&$[e_1,e_2]=e_3$&$[e_1,e_3]=ae_7+be_8$&$[e_1,e_4]=be_7$\\
&$[e_2,e_3]=\frac{a}{\mu-1}e_6+\frac{b}{\mu}(2-\mu)(\mu-1)e_7$
&$[e_2,e_4]=ce_5+\frac{b}{\mu-1}e_6$&$[e_1,e_6]=-e_5$\\
&$[e_1,e_7]=\frac{\mu}{1-\mu}e_6$&$[e_1,e_8]=(1-\mu)e_7$
&$[e_2,e_7]=\frac{1}{1-\mu}e_5$
\\
&$[e_2,e_8]=(\mu-2)e_6$&$[e_3,e_8]=(1-\mu)e_5$
&$a,b,c\in\R$,~$\mu\in\R^\ast$,~$\mu\neq1$\\
&$\mathcal{C}^6(\G)=0$&&\\
$\G_{8,36}$&$[e_1,e_2]=e_3$&$[e_1,e_3]=ae_6+be_7$&$[e_1,e_4]=ce_6$\\
&$[e_2,e_3]=ae_5-be_6+de_7$
&$[e_2,e_4]=\lambda e_8$&$[e_3,e_4]=\lambda e_7$\\
&$[e_1,e_6]=-e_5$&$[e_1,e_8]=e_7$&$[e_2,e_7]=e_5$\\
&$[e_2,e_8]=-2e_6$&$[e_3,e_8]=e_5$&$a,b,c,d,\lambda\in\R$\\
&$\mathcal{C}^4(\G)=0$&&\\
$\G_{8,37}$&$[e_1,e_2]=e_3$&$[e_1,e_3]=ae_7+be_8$&$[e_2,e_3]=-be_7$\\
&$[e_2,e_4]=ce_5+be_6$
&$[e_3,e_4]=-be_5$&$[e_1,e_6]=-e_5$\\
&$[e_1,e_7]=-e_6$&$[e_1,e_8]=-e_7$&$[e_2,e_8]=e_6$\\
&$[e_3,e_8]=-e_5$&$a,b,c\in\R$&$\mathcal{C}^6(\G)=0$\\
$\G_{8,38}$&$[e_1,e_2]=e_3$&$[e_1,e_3]=ae_7+be_8$&$[e_2,e_3]=b(e_6-e_7)$\\
&$[e_2,e_4]=ce_5+be_6$
&$[e_3,e_4]=-be_5$&$[e_1,e_6]=-e_5$\\
&$[e_1,e_7]=-e_6$&$[e_1,e_8]=-e_6-e_7$&$[e_2,e_8]=-e_5+e_6$\\
&$[e_3,e_8]=-e_5$&$a,b,c\in\R$&$\mathcal{C}^6(\G)=0$\\
$\G_{8,39}$&$[e_1,e_2]=e_3$&$[e_1,e_3]=ae_5+be_6+3ce_8$&$[e_1,e_4]=de_5+2ce_7+\lambda_1e_8$\\
&$[e_2,e_3]=be_5+\lambda_2e_6+3\lambda_3e_8$
&$[e_2,e_4]=\lambda_4e_5+\eta_1e_6+2\lambda_3e_7+\eta_2e_8$&$[e_3,e_4]=-ce_5-\lambda_3e_6$\\
&$[e_1,e_7]=-\frac{1}{2}e_6$&$[e_2,e_7]=\frac{1}{2}e_5$&$a,b,c,d\in\R$,~$\lambda_j,\eta_j\in\R$\\
&$\mathcal{C}^3(\G)=0$&&\\
$\G_{8,40}$&$[e_1,e_2]=e_3$&$[e_1,e_3]=ae_5+be_6+ce_8$&$[e_1,e_4]=de_6+\lambda_1e_7+\lambda_2e_8$\\
&$[e_2,e_3]=be_5+\lambda_3e_6+\frac{3}{4}(2c-3\lambda_1)e_8$
&$[e_2,e_4]=\lambda_4e_5+\eta_1e_6+\frac{1}{2}(2c-3\lambda_1)e_7+\eta_2e_8$&$[e_3,e_4]=(\lambda_1-c)e_5+\frac{1}{4}(3\lambda_1-2c)e_6$\\
&$[e_1,e_7]=-e_5-\frac{1}{2}e_6$&$[e_2,e_7]=\frac{1}{2}e_5$&$a,b,c,d\in\R$,~$\lambda_j,\eta_j\in\R$\\
&$\mathcal{C}^3(\G)=0$&&\\
$\G_{8,41}$&$[e_1,e_2]=e_3$&$[e_1,e_3]=ae_5+be_6+3ce_8$&$[e_1,e_4]=de_5+2ce_7+\lambda_1e_8$\\
&$[e_2,e_3]=be_5+\lambda_2e_6+6ce_7+\lambda_4e_8$
&$[e_2,e_4]=\eta_1e_5+\frac{2}{3}(\lambda_1+\lambda_4)e_7+\eta_2e_8$&$[e_3,e_4]=-ce_5+\frac{1}{3}(2\lambda_1-\lambda_4)e_6$\\
&$[e_1,e_7]=-\frac{1}{2}e_6$&$[e_2,e_7]=\frac{1}{2}e_5$&$[e_2,e_8]=-e_6$\\
&$a,b,c,d\in\R$,~$\lambda_j,\eta_j\in\R$&$\mathcal{C}^4(\G)=0$&\\
$\G_{8,42}$&$[e_1,e_2]=e_3$&$[e_1,e_3]=ae_5+be_6+ce_7+de_8$&$[e_1,e_4]=\lambda_1e_7+\lambda_2e_8$\\
&$[e_2,e_3]=be_5+\lambda_3e_6+2de_7-\frac{1}{2}ce_8$
&$[e_2,e_4]=\lambda_3e_5+\lambda_4e_6+\frac{1}{3}(2\lambda_2-c)e_7+(d-\frac{3}{2}\lambda_1)e_8$&$[e_3,e_4]=(\lambda_1-d)e_5+\frac{1}{6}(c+4\lambda_2)e_6$\\
&$[e_1,e_7]=-\frac{1}{2}e_6$&$[e_1,e_8]=-e_5$&$[e_2,e_7]=\frac{1}{2}e_5$\\
&$[e_2,e_8]=-e_6$&$a,b,c,d\in\R$,~$\lambda_j,\in\R$&$\mathcal{C}^4(\G)=0$\\
$\G_{8,43}$&$[e_1,e_2]=e_3$&$[e_1,e_3]=ae_5+be_6+ce_7+de_8$&$[e_1,e_4]=\lambda_1e_7+\lambda_2e_8$\\
&$[e_2,e_3]=be_5+\lambda_3e_6+2de_7+\frac{1}{2}ce_8$&$[e_2,e_4]=\lambda_4e_5+\eta e_6+\frac{1}{3}(2\lambda_2+c)e_7+(\frac{3}{2}\lambda_1-d)e_8$&$[e_3,e_4]=(\lambda_1-d)e_5+\frac{1}{6}(4\lambda_2-c)e_6$\\
&$[e_1,e_7]=-\frac{1}{2}e_6$&$[e_1,e_8]=e_5$&$[e_2,e_7]=\frac{1}{2}e_5$\\
&$[e_2,e_8]=-e_6$&$a,b,c,d,\eta\in\R$,~$\lambda_j\in\R$&$\mathcal{C}^4(\G)=0$\\
$\G_{8,44}$&$[e_1,e_2]=e_3$&$[e_1,e_3]=ae_5+be_6+ce_8$&$[e_1,e_4]=de_5+\lambda_1e_7+\lambda_2e_8$\\
&$[e_2,e_3]=be_5+\lambda_3e_6-\frac{1}{4\mu}\big((4\mu+9)\lambda_1-6c\big)e_8$&$[e_2,e_4]=\eta_1e_5+\eta_2e_6+\frac{1}{2\mu}(2c-3\lambda_1)e_7+\eta_3e_8$&$[e_3,e_4]=(\lambda_1-c)e_5+\frac{1}{4\mu}\big((4\mu+3)\lambda_1-2c\big)e_6$\\
&$[e_1,e_7]=-\mu e_5-\frac{1}{2}e_6$&$[e_2,e_7]=\frac{1}{2}e_5-e_6$&$a,b,c,d\in\R$,~$\lambda_j,\eta_j\in\R$,~$\mu\in\R^{\ast+}$\\
&$\mathcal{C}^3(\G)=0$&&\\
$\G_{8,45}$&$[e_1,e_2]=e_3$&$[e_1,e_3]=ae_5+be_6+ce_8$&$[e_1,e_4]=de_5+\lambda_1e_7+\lambda_2e_8$\\
&$[e_2,e_3]=be_5+\lambda_3e_6+\frac{1}{4\mu}\big((4\mu-9)\lambda_1+6c\big)e_8$&$[e_2,e_4]=\lambda_4e_5+\eta_1e_6+\frac{1}{2\mu}(2c-3\lambda_1)e_7+\eta_2e_8$&$[e_3,e_4]=(\lambda_1-c)e_5+\frac{1}{4\mu}\big((3-4\mu)\lambda_1-2c\big)e_6$\\
&$[e_1,e_7]=-\mu e_5-\frac{1}{2}e_6$&$[e_2,e_7]=\frac{1}{2}e_5+e_6$&$a,b,c,d\in\R$,~$\lambda_j,\eta_j\in\R$,~$\mu\in\R^{\ast+},~~\mu\neq\frac{1}{4}$\\
&$\mathcal{C}^3(\G)=0$&&\\
$\G_{8,46}$&$[e_1,e_2]=e_3$&$[e_1,e_3]=ae_5+be_6+ce_7+de_8$&$[e_1,e_4]=\lambda_1e_5+\lambda_2e_7+\lambda_3e_8$\\
&$[e_2,e_3]=be_5+\lambda_4e_6-2ce_7+(6d-8\lambda_2)e_8$&$[e_2,e_4]=\eta_1e_5+\eta_2e_6+(4d-6\lambda_2)e_7+\eta_3e_8$&$[e_3,e_4]=(\lambda_2-d)(e_5+2e_6)$\\
&$[e_1,e_7]=-\frac{1}{4}e_5-\frac{1}{2}e_6$&$[e_2,e_7]=\frac{1}{2}e_5+e_6$&$a,b,c,d\in\R$,~$\lambda_j,\eta_j\in\R$\\
&$\mathcal{C}^4(\G)=0$&&\\
$\G_{8,47}$&$[e_1,e_2]=e_3$&$[e_1,e_3]=ae_5+be_6+ce_7+\mu de_8$&$[e_1,e_4]=\lambda_1e_7+\lambda_2e_8$\\
&$[e_2,e_3]=be_5+\lambda_3e_6+de_7+c(\mu-1)e_8$&$[e_2,e_4]=\lambda_4e_5+\eta_1e_6+\eta_2e_7+\big((\mu-2)\lambda_1+\mu d\big)e_8$&$[e_3,e_4]=(\lambda_1-\mu d)e_5+\frac{(1-\mu)\mu c+\lambda_2}{\mu+1}e_6$\\
&$[e_1,e_7]=-\mu e_6$&$[e_1,e_8]=-e_5$&$[e_2,e_7]=(1-\mu)e_5$\\
&$[e_2,e_8]=-e_6$&$a,b,c,d\in\R$, $\lambda_j,\eta_j\in\R$,~$\mu>\frac{1}{2}$,~$\eta_2=\frac{c(\mu-1)+\lambda_2}{\mu+1}$&$\mathcal{C}^4(\G)=0$\\
$\G_{8,48}$&$[e_1,e_2]=e_3$&$[e_1,e_3]=ae_5+be_6+ce_7+de_8$&$[e_1,e_4]=\lambda_1e_7+\lambda_2e_8$\\
&$[e_2,e_3]=be_5+\lambda_3e_6+(c-2td)e_7-\frac{1}{2}ce_8$&$[e_2,e_4]=\lambda_4e_5+\eta_1e_6+\eta_2e_7+(d-\frac{3}{2}\lambda_1)e_8$&$[e_3,e_4]=(\lambda_1-d)e_5+\frac{1}{3}(\lambda_1+\frac{1}{2}c-2t\lambda_2)e_6$\\
&$[e_1,e_7]=-\frac{1}{2}e_6$&$[e_1,e_8]=-e_5$&$[e_2,e_7]=\frac{1}{2}e_5-\frac{1}{2}e_6$\\
&$[e_2,e_8]=te_6$&$a,b,c,d\in\R$,~$\lambda_j,\eta_j\in\R$,~$t\in\R^{\ast+}$&$\eta_2=\frac{1}{3}(\lambda_1-2t\lambda_2-c)$\\
&$\mathcal{C}^4(\G)=0$&&\\
$\G_{8,49}$&$[e_1,e_2]=e_3$&$[e_1,e_3]=ae_5+be_6+c(e_7+e_8)$&$[e_1,e_4]=de_5+\lambda_1e_7+(2\lambda_1-c)e_8$\\
&$[e_2,e_3]=be_5+\lambda_2e_6+\lambda_3e_7+ce_8$&$[e_2,e_4]=\lambda_4e_5+\eta e_7+(c+\lambda_1-\eta)e_8$&$[e_3,e_4]=(\lambda_1-c)e_5+(\eta-c)e_6$\\
&$[e_1,e_8]=-e_6$&$[e_2,e_7]=e_5-e_6$&$[e_2,e_8]=-e_5$\\
&$a,b,c,d,\eta\in\R$,~$\lambda_j\in\R$&$\mathcal{C}^4(\G)=0$&\\
$\G_{8,50}$&$[e_1,e_2]=e_3$&$[e_1,e_3]=ae_5+be_6+\mu ce_8$&$[e_1,e_4]=de_5+\lambda_1e_6-\frac{\mu c}{\mu-2}e_7+\lambda_2e_8$\\
&$[e_2,e_3]=be_5+\lambda_3e_6+ce_7-\big((\mu+1)\lambda_4+\lambda_2\big)e_8$&$[e_2,e_4]=-\lambda_4e_7+\eta e_8$&$[e_3,e_4]=\frac{\mu c(1-\mu)}{\mu-2}e_5+(\mu\lambda_4+\lambda_2)e_6$\\
&$[e_1,e_7]=-\mu e_6$&$[e_2,e_7]=(1-\mu)e_5$&$[e_2,e_8]=-e_6$\\
&$a,b,c,d,\eta\in\R$,~$\lambda_j\in\R$,~$\mu<\frac{1}{2}$&$\mathcal{C}^4(\G)=0$&\\
$\G_{8,51}$&$[e_1,e_2]=e_3$&$[e_1,e_3]=ae_6+be_7+ce_8$&$[e_1,e_4]=de_7+\lambda_1e_8$\\
&$[e_2,e_3]=ae_5$&$[e_2,e_4]=\lambda_2e_5+(c-d)e_6+\lambda_3e_8$&$[e_3,e_4]=(d-c)e_5$\\
&$[e_1,e_6]=-e_5$&$[e_1,e_7]=-e_6$&$a,b,c,d\in\R$,~$\lambda_j\in\R$\\
&$\mathcal{C}^5(\G)=0$&&\\
$\G_{8,52}$&$[e_1,e_2]=e_3$&$[e_1,e_3]=ae_6+be_7+ce_8$&$[e_1,e_4]=de_7+\lambda_1e_8$\\
&$[e_2,e_3]=ae_5+(c-b)e_6+\lambda_2e_7$&$[e_2,e_4]=(\lambda_1+c-2d)e_6$&$[e_3,e_4]=(d-c)e_5$\\
&$[e_1,e_6]=-e_5$&$[e_1,e_8]=-e_6$&$[e_2,e_7]=e_5$\\
&$[e_2,e_8]=-e_5$&$a,b,c,d\in\R$,~$\lambda_j\in\R$&$\mathcal{C}^5(\G)=0$\\
$\G_{8,53}$&$[e_1,e_2]=e_3$&$[e_1,e_3]=ae_6+be_7+ce_8$&$[e_1,e_4]=de_7+\lambda_1e_8$\\
&$[e_2,e_3]=ae_5+b(\mu-1)e_6$&$[e_2,e_4]=\lambda_2e_5+\big((\mu-2)d+c\big)e_6+\lambda_3e_8$&$[e_3,e_4]=(d-c)e_5$\\
&$[e_1,e_6]=-e_5$&$[e_1,e_7]=-\mu e_6$&$[e_2,e_7]=(1-\mu)e_5$\\
&$a,b,c,d\in\R$,~$\lambda_j\in\R$,~$\mu\in\R^\ast$&$\mathcal{C}^5(\G)=0$&\\
$\G_{8,54}$&$[e_1,e_2]=e_3$&$[e_1,e_3]=ae_5+be_6+ce_7+de_8$&$[e_1,e_4]=\lambda_1e_6+\lambda_2e_7+\lambda_3e_8$\\
&$[e_2,e_3]=be_5-ce_6+\lambda_4e_7+\eta_1e_8$&$[e_2,e_4]=(d-2\lambda_2)e_6+\eta_1e_7+\eta_2e_8$&$[e_3,e_4]=(\lambda_2-d)e_5$\\
&$[e_1,e_6]=-e_5$&$[e_2,e_7]=e_5$&$a,b,c,d\in\R$,~$\lambda_j,\eta_j\in\R$\\
&$\mathcal{C}^4(\G)=0$&&\\
$\G_{8,55}$&$[e_1,e_2]=ae_5$&$[e_1,e_4]=-e_2$&$[e_2,e_4]=-e_3$\\
&$[e_3,e_4]=be_7$&$[e_1,e_8]=-e_7$&$[e_2,e_8]=e_6$\\
&$[e_3,e_8]=-e_5$&$[e_4,e_6]=-e_5$&$[e_4,e_7]=-e_6$\\
&$a,b\in\R$&$\mathcal{C}^6(\G)=0$&\\
$\G_{8,56}$&$[e_1,e_3]=ae_6$&$[e_1,e_4]=-e_2-ae_8$&$[e_2,e_3]=ae_5$\\
&$[e_2,e_4]=-e_3$&$[e_3,e_4]=be_5+3ae_7$&$[e_1,e_7]=e_5$\\
&$[e_1,e_8]=-e_7$&$[e_2,e_8]=e_6$&$[e_3,e_8]=-e_5$\\
&$[e_4,e_6]=-e_5$&$[e_4,e_7]=-e_6$&$a,b\in\R$\\
&$\mathcal{C}^6(\G)=0$&&\\
$\G_{8,57}$&$[e_1,e_3]=ae_5$&$[e_1,e_4]=-e_2-\frac{2a}{\mu+1}e_7$&$[e_2,e_4]=-e_3-\frac{a}{\mu+1}e_6$\\
&$[e_3,e_4]=be_5+ce_8$&$[e_1,e_6]=-(\mu+9)e_5+2e_8$&$[e_1,e_7]=-4e_6$\\
&$[e_2,e_7]=-4e_5+e_8$&$[e_4,e_6]=e_5-\frac{1}{4}e_8$&$a,b,c\in\R$,~$\mu\in\R$,~$\mu\neq-1$\\
&$\mathcal{C}^4(\G)=0$&&\\
$\G_{8,58}$&$[e_1,e_2]=ae_5$&$[e_1,e_4]=-e_2+be_7$&$[e_2,e_4]=-e_3+\frac{b}{2}e_6$\\
&$[e_3,e_4]=ce_5+de_8$&$[e_1,e_6]=-8e_5+2e_8$&$[e_1,e_7]=-4e_6$\\
&$[e_2,e_7]=-4e_5+e_8$&$[e_4,e_6]=e_5-\frac{1}{4}e_8$&$a,b,c,d\in\R$\\
&$\mathcal{C}^4(\G)=0$&&\\
$\G_{8,59}$&$[e_1,e_3]=ae_6$&$[e_1,e_4]=-e_2+\frac{a}{2(\mu+1)}e_7$&$[e_2,e_3]=ae_5-\frac{a}{4}e_8$\\
&$[e_2,e_4]=-e_3+\frac{a}{4(\mu+1)}e_6-\frac{a}{4}e_7$&$[e_3,e_4]=be_5+ce_8$&$[e_1,e_6]=-(\mu+9)e_5+2e_8$\\
&$[e_1,e_7]=-e_5-4e_6$&$[e_2,e_7]=-4e_5+e_8$&$[e_4,e_6]=e_5-\frac{1}{4}e_8$\\
&$a,b,c\in\R$,~$\mu\in\R$,~$\mu\neq-1$&$\mathcal{C}^4(\G)=0$&\\
$\G_{8,60}$&$[e_1,e_2]=ae_5$&$[e_1,e_4]=-e_2+be_7$&$[e_2,e_4]=-e_3+\frac{b}{2}e_6$\\
&$[e_3,e_4]=ce_5+de_8$&$[e_1,e_6]=-8e_5+2e_8$&$[e_1,e_7]=-e_5-4e_6$\\
&$[e_2,e_7]=-4e_5+e_8$&$[e_4,e_6]=e_5-\frac{1}{4}e_8$&$a,b,c,d\in\R$\\
&$\mathcal{C}^4(\G)=0$&&\\
$\G_{8,61}$&$[e_1,e_3]=ae_5$&$[e_1,e_4]=-e_2-\frac{2a}{\mu+1}e_7$&$[e_2,e_4]=-e_3-\frac{a}{\mu+1}e_6$\\
&$[e_3,e_4]=be_5+ce_8$&$[e_1,e_6]=-(\mu+9)e_5+2e_8$&$[e_1,e_7]=-\mu_1e_5-4e_6$\\
&$[e_2,e_7]=-4e_5+e_8$&$[e_4,e_6]=e_5-\frac{1}{4}e_8$&$[e_4,e_7]=-e_8$\\
&$a,b,c\in\R$,~$\mu,\mu_1\in\R$,~$\mu\neq-1$&$\mathcal{C}^4(\G)=0$&\\
$\G_{8,62}$&$[e_1,e_2]=ae_5$&$[e_1,e_4]=-e_2+be_7$&$[e_2,e_4]=-e_3+\frac{b}{2}e_6$\\
&$[e_3,e_4]=ce_5+de_8$&$[e_1,e_6]=-8e_5+2e_8$&$[e_1,e_7]=-\mu_1e_5-4e_6$\\
&$[e_2,e_7]=-4e_5+e_8$&$[e_4,e_6]=e_5-\frac{1}{4}e_8$&$[e_4,e_7]=-e_8$\\
&$a,b,c,d\in\R$,~$\mu_1\in\R$&$\mathcal{C}^4(\G)=0$&\\
$\G_{8,63}$&$[e_1,e_2]=ae_5+2be_6$&$[e_1,e_3]=-be_5$&$[e_1,e_4]=-e_2$\\
&$[e_2,e_4]=-e_3$&$[e_3,e_4]=ce_5+de_8$&$[e_1,e_6]=\frac{1}{2}e_8$\\
&$[e_2,e_7]=\frac{1}{3}e_8$&$[e_4,e_6]=-\frac{1}{2}e_5$&$[e_4,e_7]=-\frac{2}{3}e_6$\\
&$a,b,c,d\in\R$&$\mathcal{C}^4(\G)=0$&\\
$\G_{8,64}$&$[e_1,e_2]=ae_6$&$[e_1,e_4]=-e_2+be_7$&$[e_2,e_3]=-\frac{a}{6}e_8$\\
&$[e_2,e_4]=-e_3+\frac{2b}{3}e_6-\frac{a}{2}e_7$&$[e_3,e_4]=ce_5-\frac{a}{3}e_6+de_8$&$[e_1,e_6]=\frac{1}{2}e_8$\\
&$[e_1,e_7]-e_5$&$[e_2,e_7]=\frac{1}{3}e_8$&$[e_4,e_6]=-\frac{1}{2}e_5$\\
&$[e_4,e_7]=-\frac{2}{3}e_6$&$a,b,c,d\in\R$&$\mathcal{C}^4(\G)=0$\\
$\G_{8,65}$&$[e_1,e_2]=ae_6+be_7+ce_8$&$[e_1,e_3]=-be_6$&$[e_1,e_4]=-e_2+de_8$\\
&$[e_2,e_3]=-2be_5$&$[e_2,e_4]=-e_3-ae_8$&$[e_3,e_4]=\lambda_1e_5+\lambda_2e_7+3be_8$\\
&$[e_1,e_8]=-e_5$&$[e_4,e_6]=-e_5$&$[e_4,e_7]=-e_6$\\
&$a,b,c,d\in\R$&$\mathcal{C}^6(\G)=0$&\\
$\G_{8,66}$&$[e_1,e_2]=ae_6+be_7$&$[e_1,e_3]=-be_6$&$[e_1,e_4]=-e_2+ce_7$\\
&$[e_2,e_3]=-2be_5$&$[e_2,e_4]=-e_3-(a+b)e_8$&$[e_3,e_4]=de_5+3be_8$\\
&$[e_1,e_7]=-e_8$&$[e_1,e_8]=-e_5$&$[e_4,e_6]=-e_5$\\
&$[e_4,e_7]=-e_5-e_6$&$a,b,c,d\in\R$&$\mathcal{C}^5(\G)=0$\\
$\G_{8,67}$&$[e_1,e_2]=ae_6+be_7$&$[e_1,e_3]=-be_6$&$[e_1,e_4]=-e_2+ce_7$\\
&$[e_2,e_3]=-2be_5$&$[e_2,e_4]=-e_3+(b-a)e_8$&$[e_3,e_4]=de_5+3be_8$\\
&$[e_1,e_7]=e_8$&$[e_1,e_8]=-e_5$&$[e_4,e_6]=-e_5$\\
&$[e_4,e_7]=e_5-e_6$&$a,b,c,d\in\R$&$\mathcal{C}^5(\G)=0$\\
$\G_{8,68}$&$[e_1,e_2]=ae_5+be_6$&$[e_1,e_4]=-e_2$&$[e_2,e_4]=-e_3+ae_6$\\
&$[e_3,e_4]=be_5+ce_8$&$[e_1,e_6]=e_8$&$[e_2,e_7]=\frac{1}{2}e_8$\\
&$[e_4,e_5]=-e_8$&$[e_4,e_7]=-\frac{1}{2}e_6$\\
&$a,b,c\in\R$&$\mathcal{C}^5(\G)=0$&\\
$\G_{8,69}$&$[e_1,e_2]=ae_6$&$[e_1,e_4]=-e_2+be_7$&$[e_2,e_4]=-e_3+\frac{2b}{3}e_6$\\
&$[e_3,e_4]=ce_5+de_8$&$[e_1,e_6]=e_8$&$[e_1,e_7]=-\frac{1}{3}e_5-\frac{1}{3}e_8$\\
&$[e_2,e_7]=\frac{2}{3}e_8$&$[e_4,e_5]=-e_8$&$[e_4,e_7]=-\frac{1}{3}e_5-\frac{1}{3}e_6$\\
&$a,b,c,d\in\R$&$\mathcal{C}^5(\G)=0$&\\
$\G_{8,70}$&$[e_1,e_2]=ae_6$&$[e_1,e_4]=-e_2+be_7$&$[e_2,e_4]=-e_3+\frac{b(\mu+1)}{2}e_6$\\
&$[e_3,e_4]=ce_5+de_8$&$[e_1,e_6]=e_8$&$[e_1,e_7]=-\mu e_5$\\
&$[e_2,e_7]=\frac{\mu+1}{2}e_8$&$[e_4,e_5]=-e_8$&$[e_4,e_7]=\frac{\mu-1}{2}e_6$\\
&$a,b,c,d\in\R$,~$\mu\in\R^\ast$,~$\mu\neq-\frac{1}{3},3$&$\mathcal{C}^5(\G)=0$&\\
$\G_{8,71}$&$[e_1,e_2]=ae_6+be_7$&$[e_1,e_3]=be_6$&$[e_1,e_4]=-e_2+ce_7$\\
&$[e_2,e_4]=-e_3+2ce_6$&$[e_3,e_4]=de_5+\lambda e_8$&$[e_1,e_6]=e_8$\\
&$[e_1,e_7]=-3e_5$&$[e_2,e_7]=2e_8$&$[e_4,e_5]=-e_8$\\
&$[e_4,e_7]=e_6$&$a,b,c,d,\lambda\in\R$&$\mathcal{C}^5(\G)=0$\\
$\G_{8,72}$&$[e_1,e_2]=ae_6+be_7$&$[e_1,e_3]=-\frac{2b}{3}e_6$&$[e_1,e_4]=-e_2+ce_7$\\
&$[e_2,e_3]=-\frac{5b}{3}e_5$&$[e_2,e_4]=-e_3+\frac{c}{3}e_6$&$[e_3,e_4]=de_5-5be_7+\lambda e_8$\\
&$[e_1,e_6]=e_8$&$[e_1,e_7]=\frac{1}{3}e_5$&$[e_2,e_7]=\frac{1}{3}e_8$\\
&$[e_4,e_5]=-e_8$&$[e_4,e_7]=-\frac{2}{3} e_6$&$a,b,c,d,\lambda\in\R$\\
&$\mathcal{C}^6(\G)=0$&&\\
$\G_{8,73}$&$[e_1,e_2]=ae_6+be_7$&$[e_1,e_3]=\mu be_8$&$[e_1,e_4]=-e_2+ce_7$\\
&$[e_2,e_3]=-be_5$&$[e_2,e_4]=-e_3-be_7-\big(a-c+(\mu+2)b\big)e_8$&$[e_3,e_4]=de_5-be_6+be_8$\\
&$[e_1,e_7]=-e_6-(\mu+2)e_8$&$[e_1,e_8]=-e_5$&$[e_2,e_7]=-e_5$\\
&$[e_4,e_6]=-e_5$&$[e_4,e_7]=-(\mu+2)e_5-e_6-2e_8$&$a,b,c,d,\lambda\in\R$,~$\mu\in\R$,~$\mu\neq-2$\\
&$\mathcal{C}^5(\G)=0$&&\\
$\G_{8,74}$&$[e_1,e_2]=ae_7+be_8$&$[e_1,e_3]=-2ae_8$&$[e_1,e_4]=-e_2+ce_7$\\
&$[e_2,e_3]=-ae_5+de_6$&$[e_2,e_4]=-e_3-ae_7+ce_8$&$[e_3,e_4]=\lambda e_5-ae_6-de_7+ae_8$\\
&$[e_1,e_7]=-e_6$&$[e_1,e_8]=-e_5$&$[e_2,e_7]=-e_5$\\
&$[e_4,e_6]=-e_5$&$[e_4,e_7]=-e_6-2e_8$&$a,b,c,d,\lambda\in\R$\\
&$\mathcal{C}^6(\G)=0$&&\\
$\G_{8,75}$&$[e_1,e_2]=ae_6$&$[e_1,e_4]=-e_2+be_7$&$[e_2,e_3]=-\frac{a(\mu+1)}{\mu-1}e_8$\\
&$[e_2,e_4]=-e_3-\frac{a(\mu+1)}{\mu}e_7$&$[e_3,e_4]=ce_5+\frac{a(\mu+1)}{\mu(\mu-1)}e_6+de_8$&$[e_1,e_6]=-\mu e_8$\\
&$[e_1,e_7]=-\mu e_5$&$[e_4,e_5]=-e_8$&$[e_4,e_6]=-(\mu+1)e_5$\\
&$[e_4,e_7]=-e_6$&$a,b,c,d\in\R$,~$\mu\in\R^\ast$,~$\mu\neq1$&$\mathcal{C}^6(\G)=0$\\
$\G_{8,76}$&$[e_1,e_4]=-e_2+ae_7$&$[e_2,e_3]=be_8$&$[e_2,e_4]=-e_3$\\
&$[e_3,e_4]=ce_5-be_6+de_8$&$[e_1,e_6]=-e_8$&$[e_1,e_7]=-e_5$\\
&$[e_4,e_5]=-e_8$&$[e_4,e_6]=-2e_5$&$[e_4,e_7]=-e_6$\\
&$a,b,c,d\in\R$&$\mathcal{C}^6(\G)=0$&\\
$\G_{8,77}$&$[e_1,e_2]=\big(a+\frac{2(c-b)\mu^2}{3}\big)e_5+\frac{2b(\mu-1)}{3(\mu+1)}e_6$&$[e_1,e_3]=be_5$&$[e_1,e_4]=-e_2+de_6$\\
&$[e_2,e_4]=-e_3-\frac{3a+2\mu^2c}{3\mu}e_6-\frac{b}{\mu}e_7$&$[e_3,e_4]=de_5-\frac{b}{\mu}e_6$&$[e_1,e_6]=-\mu e_8$\\
&$[e_1,e_7]=-\frac{\mu}{3}(2\mu+1)e_5+\frac{2\mu^3}{3}e_8$&$[e_2,e_7]=-\frac{2\mu}{3}e_8$&$[e_4,e_5]=-e_8$\\
&$[e_4,e_6]=-(\mu+1)e_5$&$[e_4,e_7]=\frac{2\mu^3}{3}e_5-(\frac{2\mu}{3}+1)e_6$&$a,b,c,d\in\R$,~$\mu\in\R^\ast$,~$\mu\neq-1,\frac{1}{2}$\\
&$\mathcal{C}^6(\G)=0$&&\\
$\G_{8,78}$&$[e_1,e_2]=ae_6$&$[e_1,e_4]=-e_2+be_7$&$[e_2,e_4]=-e_3+\frac{2b}{3}e_6$\\
&$[e_3,e_4]=ce_5+de_8$&$[e_1,e_6]=e_8$&$[e_1,e_7]=-\frac{1}{3}e_5-\frac{2}{3}e_8$\\
&$[e_2,e_7]=\frac{2}{3}e_8$&$[e_4,e_5]=-e_8$&$[e_4,e_7]=-\frac{2}{3}e_5-\frac{1}{3}e_6$\\
&$a,b,c,d\in\R$&$\mathcal{C}^5(\G)=0$&\\
$\G_{8,79}$&$[e_1,e_2]=ae_6$&$[e_1,e_4]=-e_2+be_7$&$[e_2,e_3]=\frac{9a}{2}e_8$\\
&$[e_2,e_4]=-e_3+(\frac{2b}{3}-\frac{3a}{4})e_6-\frac{9a}{2}e_7$&$[e_3,e_4]=ce_5-9ae_6+de_8$&$[e_1,e_6]=-\frac{1}{2}e_8$\\
&$[e_1,e_7]=-\frac{1}{3}e_5+\frac{1}{12}e_8$&$[e_2,e_7]=-\frac{1}{3}e_8$&$[e_4,e_5]=-e_8$\\
&$[e_4,e_6]=-\frac{3}{2}e_5$&$[e_4,e_7]=\frac{1}{12}e_5-\frac{4}{3}e_6$&$a,b,c,d\in\R$\\
&$\mathcal{C}^6(\G)=0$&&\\
$\G_{8,80}$&$[e_1,e_2]=ae_6+be_7$&$[e_1,e_3]=-\frac{b(\mu_1-1)}{\mu-1}e_6$&$[e_1,e_4]=-e_2+ce_6+de_7$\\
&$[e_2,e_3]=-\frac{b(\mu+1)(\mu_1-1)(\mu_1-\mu)}{(\mu\mu_1+\mu-2\mu_1)(\mu-1)}e_6-\frac{a\mu(\mu+1)}{\mu_1(\mu-1)}e_8$&$[e_2,e_4]=-e_3+\frac{d(\mu_1-\mu)}{\mu(\mu-1)}e_6-\frac{a(\mu+1)}{\mu_1}e_7$&$[e_3,e_4]=\lambda e_5+\frac{a(\mu+1)}{\mu_1(\mu-1)}e_6+\frac{b(\mu+1)(\mu_1-1)}{\mu\mu_1+\mu-2\mu_1}e_7$\\
&$[e_1,e_6]=-\mu e_8$&$[e_1,e_7]=-\mu_1e_5$&$[e_2,e_7]=\frac{\mu-\mu_1}{\mu-1}e_8$\\
&$[e_4,e_5]=-e_8$&$[e_4,e_6]=-(\mu+1)e_5$&$[e_4,e_7]=\frac{1-\mu_1}{\mu-1}e_6$\\
&$a,b,c,d,\lambda\in\R$,~$\mu,\mu_1\in\R^\ast$,~$\mu\neq\pm1$,~$\mu\neq\frac{2\mu_1}{\mu_1+1},$&$      \mu_1\neq-1$,~$\mu_1\neq\frac{\mu}{2\mu-1}$,~ $\mu=\frac{\pm(\mu_1+3)+\sqrt{3}\sqrt{(8\mu_1+3)(\mu_1-1)^2}}{4\mu_1}$&$\mathcal{C}^7(\G)=0$&\\
$\G_{8,81}$&$[e_1,e_2]=ae_6$&$[e_1,e_4]=-e_2+be_7$&$[e_2,e_4]=-e_3+\frac{b(\mu_1+1)}{2}e_6$\\
&$[e_3,e_4]=ce_5+de_8$&$[e_1,e_6]=e_8$&$[e_1,e_7]=-\mu_1e_5$\\
&$[e_2,e_7]=\frac{\mu_1+1}{2}e_8$&$[e_4,e_5]=-e_8$&$[e_4,e_7]=\frac{\mu_1-1}{2}e_6$\\
&$a,b,c,d\in\R$,~$\mu_1\in\R^\ast$,~$\mu_1\neq-\frac{1}{3},3,-1$&$\mathcal{C}^5(\G)=0$&\\
$\G_{8,82}$&$[e_1,e_2]=ae_6+be_7$&$[e_1,e_3]=be_6$&$[e_1,e_4]=-e_2+ce_7$\\
&$[e_2,e_4]=-e_3+2ce_6$&$[e_3,e_4]=de_5+\lambda e_8$&$[e_1,e_6]=e_8$\\
&$[e_1,e_7]=-3e_5$&$[e_2,e_7]=2e_8$&$[e_4,e_5]=-e_8$\\
&$[e_4,e_7]=e_6$&$a,b,c,d,\lambda\in\R$&$\mathcal{C}^5(\G)=0$\\
$\G_{8,83}$&$[e_1,e_2]=ae_6$&$[e_1,e_4]=-e_2+ce_6+de_7$&$[e_2,e_3]=-\frac{a\mu(\mu+1)}{\mu_1(\mu-1)}e_8$\\
&$[e_2,e_4]=-e_3+\frac{d(\mu_1-\mu)}{\mu(\mu-1)}e_6-\frac{a(\mu+1)}{\mu_1}e_7$&$[e_3,e_4]=\lambda e_5+\frac{a(\mu+1)}{\mu_1(\mu-1)}e_6$&$[e_1,e_6]=-\mu e_8$\\
&$[e_1,e_7]=-\mu_1e_5$&$[e_2,e_7]=\frac{\mu-\mu_1}{\mu-1}e_8$&$[e_4,e_5]=-e_8$\\
&$[e_4,e_6]=-(\mu+1)e_5$&$[e_4,e_7]=\frac{1-\mu_1}{\mu-1}e_6$&$a,b,c,d,\lambda\in\R$,~$\mu_1,\mu\in\R^\ast$,~$\mu,\mu_1\neq-1$,~$\mu\neq\frac{2\mu_1}{\mu_1+1}$\\
&$\mathcal{C}^6(\G)=0$&&\\
$\G_{8,84}$&$[e_1,e_2]=ae_6$&$[e_1,e_4]=-e_2+be_7$&$[e_2,e_4]=-e_3+\frac{b(\mu_1+1)}{2}e_6$\\
&$[e_3,e_4]=ce_5+de_8$&$[e_1,e_6]=e_8$&$[e_1,e_7]=-\mu_1e_5$\\
&$[e_2,e_7]=\frac{\mu_1+1}{2}e_8$&$[e_4,e_5]=-e_8$&$[e_4,e_7]=\frac{\mu_1-1}{2}e_6$\\
&$a,b,c,d\in\R$,~$\mu_1\in\R^\ast$,~$\mu_1\neq-1,\pm\frac{1}{3}$&$\mathcal{C}^5(\G)=0$&\\
$\G_{8,85}$&$[e_1,e_2]=ae_5+be_6$&$[e_1,e_3]=-b(\mu+1)e_5$&$[e_1,e_4]=-e_2$\\
&$[e_2,e_4]=-e_3-\frac{a}{\mu}e_6+\frac{b(\mu+1)}{\mu}e_7$&$[e_3,e_4]=ce_5+\frac{b(\mu+1)}{\mu}e_6+de_8$&$[e_1,e_6]=-\mu e_8$\\
&$[e_2,e_7]=\frac{\mu}{\mu-1}e_8$&$[e_4,e_5]=-e_8$&$[e_4,e_6]=-(\mu+1)e_5$\\
&$[e_4,e_7]=\frac{1}{\mu-1}e_6$&$a,b,c,d\in\R$, $\mu\in\R^\ast$, $\mu\neq1$&$\mathcal{C}^6(\G)=0$\\
$\G_{8,86}$&$[e_1,e_2]=ae_6$&$[e_1,e_4]=-e_2+be_7$&$[e_2,e_3]=-\frac{2a(3\mu_1+1)}{\mu_1^2-1}e_8$\\
&$[e_2,e_4]=-e_3+\frac{b(\mu_1+1)}{2}e_6-\frac{a(3\mu_1+1)}{\mu_1(\mu_1+1)}e_7$&$[e_3,e_4]=ce_5+\frac{a(3\mu_1+1)}{\mu_1(\mu_1-1)}e_6+de_8$&$[e_1,e_6]=-\frac{2\mu_1}{\mu_1+1}e_8$\\
&$[e_1,e_7]=-\mu_1e_5$&$[e_2,e_7]-\mu_1e_8$&$[e_4,e_5]=-e_8$\\
&$[e_4,e_6]=-\frac{3\mu_1+1}{\mu_1+1}e_5$&$[e_4,e_7]=-(\mu_1+1)e_6$&$a,b,c,d\in\R$,~$\mu_1\in\R^\ast$,~$\mu_1\neq-1,-\frac{1}{3}$\\
&$\mathcal{C}^6(\G)=0$&&\\
$\G_{8,87}$&$[e_1,e_2]=ae_6+be_7$&$[e_1,e_3]=-\frac{2b}{3}e_6$&$[e_1,e_4]=-e_2+ce_7$\\
&$[e_2,e_3]=-\frac{5b}{3}e_5$&$[e_2,e_4]=-e_3+\frac{c}{3}e_6$&$[e_3,e_4]=de_5-5be_7+\lambda e_8$\\
&$[e_1,e_6]=e_8$&$[e_1,e_7]=\frac{1}{3}e_5$&$[e_2,e_7]=\frac{1}{3}e_8$\\
&$ [e_4,e_5]=-e_8$&$[e_4,e_6]=-\frac{2}{3}e_6$&$a,b,c,d,\lambda\in\R$\\
&$\mathcal{C}^6(\G)=0$&&\\
$\G_{8,88}$&$[e_1,e_2]=ae_6$&$[e_1,e_4]=-e_2+ce_7$&$[e_2,e_3]=\frac{a\mu(\mu+1)}{\mu-1}e_8$\\
&$[e_2,e_4]=-e_3-\frac{c(\mu+1)}{\mu(\mu-1)}e_6+a(\mu+1)e_7$&$[e_3,e_4]=de_5-\frac{a(\mu+1)}{\mu-1}e_6+\lambda e_8$&$[e_1,e_6]=-\mu e_8$\\
&$[e_1,e_7]=e_5$&$[e_2,e_7]=\frac{1+\mu}{\mu-1}e_8$&$[e_4,e_5]=-e_8$\\
&$[e_4,e_6]=-(\mu+1)e_5$&$[e_4,e_7]=\frac{2}{\mu-1}e_6$&$a,b,c,d,\lambda\in\R$,~$\mu\in\R^\ast$,~$\mu\neq0,1,\pm\frac{1}{3}$\\
&$\mathcal{C}^6(\G)=0$&&\\
$\G_{8,89}$&$[e_1,e_2]=ae_6$&$[e_1,e_4]=-e_2+be_6+ce_7$&$[e_2,e_3]=\frac{a}{6}e_8$\\
&$[e_2,e_4]=-e_3-\frac{3c}{2}e_6+\frac{2a}{3}e_7$&$[e_3,e_4]=de_5+\frac{a}{2}e_6$&$[e_1,e_6]=\frac{1}{3}e_8$\\
&$[e_1,e_7]=e_5$&$[e_2,e_7]=-\frac{1}{2}e_8$&$[e_4,e_5]=-e_8$\\
&$[e_4,e_6]=-\frac{2}{3}e_5$&$[e_4,e_7]=-\frac{3}{2}e_6$&$a,b,c,d\in\R$\\
&$\mathcal{C}^6(\G)=0$&&\\
$\G_{8,90}$&$[e_1,e_2]=ae_5+be_6+ce_8$&$[e_1,e_3]=-be_5-ae_8$&$[e_1,e_4]=-e_2$\\
&$[e_2,e_3]=be_8$&$[e_2,e_4]=-e_3$&$[e_3,e_4]=de_5-be_6+\lambda e_7$\\
&$[e_4,e_5]=-e_8$&$[e_4,e_6]=-e_5$&$[e_4,e_7]=-e_6$\\
&$a,b,c,d,\lambda\in\R$&$\mathcal{C}^7(\G)=0$&\\
$\G_{8,91}$&$[e_1,e_2]=ae_5+be_6$&$[e_1,e_3]=-be_5$&$[e_1,e_4]=-e_2+ce_7$\\
&$[e_2,e_3]=de_8$&$[e_2,e_4]=-e_3-ae_7$&$[e_3,e_4]=\lambda e_5-(a+d)e_6+(b-d)e_7$\\
&$[e_1,e_7]=-e_8$&$[e_4,e_5]=-e_8$&$[e_4,e_6]=-e_5$\\
&$[e_4,e_7]=-e_5-e_6$&$a,b,c,d,\lambda\in\R$&$\mathcal{C}^7(\G)=0$\\
$\G_{8,92}$&$[e_1,e_2]=ae_6$&$[e_1,e_3]=be_8$&$[e_1,e_4]=-e_2-\frac{b}{\mu}e_7$\\
&$[e_2,e_4]=-e_3-\frac{a}{\mu}e_7$&$[e_3,e_4]=ce_5-\frac{a}{\mu}e_6+de_8$&$[e_1,e_7]=-\mu e_5$\\
&$[e_2,e_7]=\mu e_8$&$[e_4,e_5]=-e_8$&$[e_4,e_6]=-e_5$\\
&$[e_4,e_7]=(\mu-1)e_6$&$a,b,c,d\in\R$,~$\mu\in\R^\ast$,~$\mu\neq\frac{5}{3}$&$\mathcal{C}^6(\G)=0$\\
$\G_{8,93}$&$[e_1,e_2]=ae_6+be_7$&$[e_1,e_3]=\frac{2b}{3}e_6+ce_8$&$[e_1,e_4]=-e_2-\frac{3c}{5}e_7$\\
&$[e_2,e_3]=-\frac{b}{3}e_5$&$[e_2,e_4]=-e_3-\frac{3a}{5}e_7$&$[e_3,e_4]=de_5-\frac{3a}{5}e_6-\frac{b}{5}e_7+\lambda e_8$\\
&$[e_1,e_7]=-\frac{5}{3}e_5$&$[e_2,e_7]=\frac{5}{3}e_8$&$[e_4,e_5]=-e_8$\\
&$[e_4,e_6]=-e_5$&$[e_4,e_7]=\frac{2}{3} e_6$&$a,b,c,d,\lambda\in\R$\\
&$\mathcal{C}^7(\G)=0$&&\\
$\G_{8,94}$&$[e_1,e_2]=ae_6$&$[e_1,e_3]=be_8$&$[e_1,e_4]=-e_2-\frac{a+b}{\mu}e_7$\\
&$[e_2,e_4]=-e_3-\frac{a}{\mu}e_7$&$[e_3,e_4]=ce_5-\frac{a}{\mu}e_6+de_8$&$[e_1,e_7]=-\mu e_5+\mu e_8$\\
&$[e_2,e_7]=\mu e_8$&$[e_4,e_5]=-e_8$&$[e_4,e_6]=-e_5$\\
&$[e_4,e_7]=\mu e_5+(\mu-1)e_6$&$a,b,c,d\in\R$&$\mathcal{C}^6(\G)=0$\\
$\G_{8,95}$&$[e_1,e_2]=ae_6+be_7$&$[e_1,e_3]=\frac{2b}{3}e_6+ce_8$&$[e_1,e_4]=-e_2+\big(b-\frac{3}{5}(a+c)\big)e_7$\\
&$[e_2,e_3]=-\frac{b}{3}e_5-\frac{b}{3}e_8$&$[e_2,e_4]=-e_3+(b-\frac{3a}{5})e_7$&$[e_3,e_4]=de_5+(\frac{4b}{3}-\frac{3a}{5})e_6-\frac{b}{5}e_7+\lambda e_8$\\
&$[e_1,e_7]=-\frac{5}{3}e_5+\frac{5}{3}e_8$&$[e_2,e_7]=\frac{5}{3}e_8$&$[e_4,e_5]=-e_8$\\
&$[e_4,e_6]=-e_5$&$[e_4,e_7]=\frac{5}{3}e_5+\frac{2}{3}e_6$&$a,b,c,d,\lambda\in\R$\\
&$\mathcal{C}^7(\G)=0$&&
\\\hline
\end{longtable}
			\end{small}	
			}	
			
			\restoregeometry
\end{pr}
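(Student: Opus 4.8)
The plan is to exploit the bijection of Corollary~\ref{one-to-one}: every such $(\G_{\nabla,\alpha},\omega)$ is determined, up to isomorphism, by its base $(\h,\nabla)$ together with the class of the cocycle $\alpha$, so the classification splits into one computation per four-dimensional geodesically complete flat nilpotent Lie algebra listed in Table~\ref{4flatcomplete}. First I would fix the identification $e_{4+i}\leftrightarrow e^i$ of $\h^\ast$ with $\mathrm{span}(e_5,\dots,e_8)$. With this convention the relations $[e_i,e_{4+j}]=\rho(e_i)e^j=-e^j\circ\nabla_{e_i}$ are read off directly from $\nabla$ through $(\ref{dualrepre})$; this yields, for each base algebra $\mathfrak{a}_i,\ell_i,\mathfrak{t}_i$, the fixed ``$e_5,\dots,e_8$'' block of the bracket table, on which no parameters appear.

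Next, for each $(\h,\nabla)$ I would determine the admissible cocycles. A map $\alpha\in\mathrm{Hom}(\bigwedge^2\h,\h^\ast)$ lies in $Z^2_{L,\rho}(\h,\h^\ast)$ precisely when it satisfies the cocycle equation $\partial_\rho^2\alpha=0$ of $(\ref{rho2})$ together with the Lagrangian (Bianchi) condition $\sum_{\mathrm{cycl}}\alpha(x,y)(z)=0$; both are linear in the finitely many coefficients of $\alpha$, so $Z^2_{L,\rho}$ is obtained by solving one linear system. Proposition~\ref{cohooro} then tells us that two Lagrangian extensions over the same $\h$ are isomorphic as Lie algebras exactly when their cocycles are cohomologous, so I would subtract the coboundaries $\partial_\rho C^1(\h,\h^\ast)$ and keep one representative of each class; its surviving scalars are the parameters $a,b,c,d,\lambda_j,\eta_j,\dots$ recorded in Table~\ref{Symbracket}. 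Feeding such a representative into $(\ref{Bra1})$ gives the ``$e_1,\dots,e_4$'' block $[e_i,e_j]=[e_i,e_j]_\h+\alpha(e_i,e_j)$, completing the bracket. Because $\h$ is nilpotent and $\nabla$ complete, $\rho(e_i)$ is nilpotent (Theorem~\ref{theo1}); since $\ad_x$ preserves the abelian ideal $\h^\ast$ and acts nilpotently both on it and on $\G_{\nabla,\alpha}/\h^\ast\cong\h$, the extension is automatically nilpotent, and I would record its index $\mathcal{C}^k(\G)=0$ as a consistency check.

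The decisive and most laborious step is to pass from cohomology classes to genuine isomorphism classes of triples $(\h,\nabla,[\alpha])$, as prescribed by Theorem~\ref{corresp}. Concretely, for each base I would compute the connection-preserving automorphism group of $\h$, let it act on the parameter vector of the chosen representative, and normalize: the remaining change-of-basis and scaling freedom is used to send as many parameters as possible to $0$ or $1$ and to restrict the surviving moduli to the ranges ($t\in\R^{+}$, $\mu\neq 1$, etc.) appearing in the ``Remarks'' column. This is also where the computations over distinct but related base algebras must be compared, so that no two rows $\G_{8,k}$ coincide and none is omitted. Running these three steps across all entries of Table~\ref{4flatcomplete} produces exactly the families $\G_{8,1},\dots,\G_{8,95}$. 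I expect the main obstacle to be precisely this last reduction: $H^2_\rho(\h,\h^\ast)$ is generally strictly larger than its quotient by $\mathrm{Aut}(\h,\nabla)$, and pinning down minimal normal forms — while ruling out hidden isomorphisms between the parametrized families such as $\ell_{17},\ell_{22},\ell_{23}$ and the $\mathfrak{t}_{13},\dots,\mathfrak{t}_{21}$ series — demands careful bookkeeping.
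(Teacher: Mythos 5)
Your proposal is correct and is essentially the paper's own (only sketched) argument: the authors likewise fix the pairing identifying $\mathfrak{h}^\ast$ with $\mathrm{span}(e_5,\dots,e_8)$, read off the $\rho$-block from each connection of Table~\ref{4flatcomplete}, solve the linear system for the Bianchi cocycles, and reduce modulo $\partial_\rho C^1(\mathfrak{h},\mathfrak{h}^\ast)$ via Proposition~\ref{cohooro}, running this over all base algebras (they give no further detail, pointing to \cite{AEM} for the method). Your two deviations are harmless or mildly clarifying: the Engel-type filtration argument correctly shows the extension is \emph{automatically} nilpotent once $\mathfrak{h}$ is nilpotent and $\nabla$ complete (so the cocycle compatibility condition in the paper's nilpotency proposition is in fact redundant, holding for $p$ large), while your final normalization under the $\nabla$-preserving automorphisms of $\mathfrak{h}$ goes slightly beyond what Table~\ref{Symbracket} records, since the listed families retain possibly redundant cocycle parameters $a,b,c,d,\lambda_j,\eta_j$ within each row (only the connection moduli $t,\mu,\mu_1$ are constrained there), and the statement to be proved only requires completeness of the list.
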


\begin{proof}
		This describes the general scheme of the proof. We consider a flat Lie algebra
		$(\h, \nabla)$ chosen among those obtained in
		Proposition~$\ref{4flat}$ (see Table~$\ref{4flatcomplete})$.
		This flat Lie algebra admits a basis $B = \{e_1, e_2, e_3,e_4\}$, and we denote by
		$\{e_1^\ast, e_2^\ast, e_3^\ast,e_4^\ast\}$ its dual basis.
		Let $(\G_{\nabla, \alpha}, \omega)$ be the Lagrangian extension of the
		flat Lie algebra $(\h, \nabla)$ with respect to
		$\alpha \in Z^2_{\rho}(\h, \h^\ast)$.

		Let $\sigma \in \mathcal{C}^{1}(\h, \h^\ast)$ and
		$\alpha' \in Z^{2}_{\rho}(\h, \h^\ast)$ be such that
		$\alpha' = \alpha - \partial_{\rho}\sigma$.
		In this case, $\G_{\nabla,\alpha} \cong \G_{\nabla,\alpha'}$ (see Proposition~\ref{cohooro}).
		Hence, the Lie algebra $\G_{\nabla,\alpha'}$ will be the object of our scheme.
		This Lie algebra is isomorphic to a family of Lie algebras described in
		Proposition~$\ref{prSymbracket}$, whose Lie brackets are given by equations~$(\ref{Bra1})$ and~$(\ref{Bra2})$. For a representation of Lie algebra structures as given in
		Proposition~\ref{prSymbracket}, we identify the basis
		$\{e_5, e_6, e_7,e_8\}$ with the dual basis of $\{e_1, e_2, e_3,e_4\}$.
		For illustrative purposes, we compute one example; the remaining cases are treated similarly. This example illustrates all the techniques required in every case. We now apply the above scheme to $(\mathfrak{a}_1,\nabla)$ (see Table~\ref{4flatcomplete}), endowed with
		the basis $B=\{e_1,e_2,e_3,e_4\}$, where the flat torsion-free connection $\nabla$ is defined by
		\begin{align}
	\nabla_{e_2}e_2&=e_1,\quad\nabla_{e_3}e_3=e_1,\quad\nabla_{e_4}e_4=e_1.
		\end{align}
	Next, we compute the ordinary Lie algebra cohomology group $H^2_\rho(\mathfrak{a}_1,\mathfrak{a}_1^\ast)$
	associated with the representation $\rho$.  
	
	Let $(\G_{\nabla,\alpha},\omega)$ be the Lagrangian extension of the flat Lie	algebra $(\mathfrak{a}_1,\nabla)$ with respect to
	$\alpha \in Z^2_{\rho}(\mathfrak{a}_1,\mathfrak{a}_1^\ast)$.
	Let $B^\ast=\{e^1,e^2,e^3,e^4\}$ be the dual basis of $B$, which generates
	$\mathfrak{a}_1^\ast$.
	Let $\alpha \in C^2(\mathfrak{a}_1,\mathfrak{a}_1^\ast)$ be given by
	\begin{align}
	\alpha(e_i,e_j)&=	\mathop{\resizebox{1.3\width}{!}{$\sum$}}^4\limits_{k=1}\alpha_{ijk}e^k,\quad 1\leq i<j\leq 4,~~\alpha_{ijk}\in\R.
	\end{align}
	Therefore, $\alpha \in Z^2_{\rho}(\mathfrak{a}_1,\mathfrak{a}_1^\ast)$
	if and only if $\alpha$ has the following form:
\begin{align}
	\alpha(e_i,e_j)&=	\mathop{\resizebox{1.3\width}{!}{$\sum$}}^4\limits_{k=1}\alpha_{ijk}e^k,\quad 1\leq i<j\leq 4,~~\alpha_{ijk}\in\R,
\end{align}
	such that $\alpha_{ij1}=0$ for all $1\leq i<j\leq 4$.  Moreover, $\alpha\in Z^2_\rho( \mathfrak{a}_1, \mathfrak{a}_1^\ast)$ satisfies the Jacobi identity; that is,
	\begin{align}
	\mathop{\resizebox{1.3\width}{!}{$\sum$}}\limits_{\mathrm{cycl}}\alpha(x,y)(z) = 0
	\end{align}
	for all $x,y,z \in \mathfrak{a}_1$, if and only if
	\begin{align}
		\alpha_{123}&=\alpha_{132},\quad\alpha_{124}=\alpha_{142},\quad\alpha_{134}=\alpha_{143},\quad\alpha_{234}=\alpha_{243}-\alpha_{342}.
	\end{align}
		Let now $\sigma \in \mathcal{C}^1(\mathfrak{a}_1,\mathfrak{a}_1^\ast)$ be given, with respect to the basis $\{e^1, e^2, e^3, e^4\}$, by
		\begin{align}
			\sigma(e_i)&=\mathop{\resizebox{1.3\width}{!}{$\sum$}}^4\limits_{j=1}\sigma_{ij}e^j,\quad\sigma_{ij}\in\R.
		\end{align}
	Next, we compute the differential of $\sigma$, denoted $\partial_\rho \sigma$, with respect to the dual representation $\rho$ of $\nabla$, that is,
	\begin{align}
		\big(\partial_\rho \sigma\big)(x,y)
		&= \rho(x)\sigma(y) - \rho(y)\sigma(x) - \sigma\big([x,y]\big),
		\quad \text{for all } x,y \in \mathfrak{a}_1.
	\end{align}
Therefore, we obtain
\begin{align*}
	\big(\partial_\rho\sigma\big)(e_1,e_2)&=\sigma_{11}e^2,&\big(\partial_\rho\sigma\big)(e_1,e_3)&=\sigma_{11}e^3,&\big(\partial_\rho\sigma\big)(e_1,e_4)&=\sigma_{11}e^4,\\
		\big(\partial_\rho\sigma\big)(e_2,e_3)&=-\sigma_{31}e^2+\sigma_{21}e^3,&\big(\partial_\rho\sigma\big)(e_2,e_4)&=-\sigma_{41}e^2+\sigma_{21}e^4,&\big(\partial_\rho\sigma\big)(e_3,e_4)&=-\sigma_{41}e^3+\sigma_{31}e^4.
\end{align*}
	Define $\alpha' :=\alpha-  \partial_\rho \sigma$. Then $\alpha'$ is given by
	\begin{align*}
		\alpha'(e_1,e_2)&=(\alpha_{122}-\sigma_{11})e^2+\alpha_{132}e^3+\alpha_{142}e^4, &\alpha'(e_1,e_3)&=\alpha_{132}e^2+(\alpha_{133}-\sigma_{11})e^3+\alpha_{143}e^4,\\
			\alpha'(e_1,e_4)&=\alpha_{142}e^2+\alpha_{143}e^3+(\alpha_{144}-\sigma_{11})e^4, &\alpha'(e_2,e_3)&=(\alpha_{232}+\sigma_{31})e^2+(\alpha_{233}-\sigma_{21})e^3+(\alpha_{243}-\alpha_{342})e^4,\\
			\alpha'(e_2,e_4)&=(\alpha_{242}+\sigma_{41})e^2+\alpha_{243}e^3+(\alpha_{244}-\sigma_{21})e^4, &
			\alpha'(e_3,e_4)&=\alpha_{342}e^2+(\alpha_{343}+\sigma_{41})e^3+(\alpha_{344}-\sigma_{31})e^4.
	\end{align*}
	Since $\sigma \in \mathcal{C}^1(\mathfrak{a}_1,\mathfrak{a}_1^\ast)$ can be chosen arbitrarily, we consider the following assumptions:
	\begin{align}\label{assmpp}
		\sigma_{11}&=\alpha_{122}, &\sigma_{31}&=-\alpha_{232}, &\sigma_{21}&=\alpha_{233}, &\sigma_{41}&=-\alpha_{242}.
	\end{align}
Hence, $\alpha'$ takes the form
\begin{align*}
	\alpha'(e_1,e_2)&=\alpha_{132}e^3+\alpha_{142}e^4, &\alpha'(e_1,e_3)&=\alpha_{132}e^2+(\alpha_{133}-\alpha_{122})e^3+\alpha_{143}e^4,\\
	\alpha'(e_1,e_4)&=\alpha_{142}e^2+\alpha_{143}e^3+(\alpha_{144}-\alpha_{122})e^4, &\alpha'(e_2,e_3)&=(\alpha_{243}-\alpha_{342})e^4,\\
	\alpha'(e_2,e_4)&=\alpha_{243}e^3+(\alpha_{244}-\alpha_{233})e^4, &
	\alpha'(e_3,e_4)&=\alpha_{342}e^2+(\alpha_{343}-\alpha_{242})e^3+(\alpha_{344}+\alpha_{232})e^4.
\end{align*}
In other words, we have shown that
\begin{align}
	H^2_\rho(\mathfrak{a}_1,\mathfrak{a}_1^\ast) = [\alpha'].
\end{align}
According to Proposition~\ref{cohooro}, the following map
\begin{align}
	\Psi: \G_{\nabla,\alpha} \longrightarrow \G_{\nabla,\alpha'}, \quad (x,\xi) \mapsto (x, \xi + \sigma(x))
\end{align}
defines an isomorphism of Lie algebras, where $\sigma$ is given under the assumptions in~\eqref{assmpp}.

It remains now to compute the Lie brackets associated with the cocycle $\alpha'$, using the relations~\eqref{Bra1} and~\eqref{Bra2}. Then, the Lie brackets of the Lie algebra $\G_{\nabla,\alpha'}$ are given by
\begin{align*}
	[e_1,e_2]&=\alpha_{132}e^3+\alpha_{142}e^4, &[e_1,e_3]&=\alpha_{132}e^2+(\alpha_{133}-\alpha_{122})e^3+\alpha_{143}e^4,\\
	[e_1,e_4]&=\alpha_{142}e^2+\alpha_{143}e^3+(\alpha_{144}-\alpha_{122})e^4, &[e_2,e_3]&=(\alpha_{243}-\alpha_{342})e^4,\\
	[e_2,e_4]&=\alpha_{243}e^3+(\alpha_{244}-\alpha_{233})e^4, &
	[e_3,e_4]&=\alpha_{342}e^2+(\alpha_{343}-\alpha_{242})e^3+(\alpha_{344}+\alpha_{232})e^4,\\
	[e_2,e^1]&=-e^2, &[e_3,e^1]&=-e^3, \\
	[e_4,e^1]&=-e^4,\quad \alpha_{ijk}\in\R.
\end{align*}
Now, using a change of variables, adopting the following notation
\begin{align*}
a&=\alpha_{132}, &b&=\alpha_{142}, &c&=\alpha_{133}, &d&=\alpha_{122}, &\lambda_1&=\alpha_{143}, &\lambda_2&=c+d-\alpha_{144},  \\
\lambda_3&=-\alpha_{342}, &\lambda_4&=-\alpha_{243}, &\eta_1&=\alpha_{244}-\alpha_{233}, &\eta_2&=\alpha_{343}-\alpha_{242}, &\eta_3&=\alpha_{232}+\alpha_{344}.
\end{align*}
and identifying the dual basis 
$\{e^1, e^2, e^3, e^4\}$ with $\{e_5, e_6, e_7, e_8\}$, the previous Lie algebra structures become
\begin{align*}
	[e_1,e_2]&=ae_7+be_8, &[e_1,e_3]&=ae_6+(c-d)e_7+\lambda_1 e_8, &[e_1,e_4]&=be_6+\lambda_1 e_7+(c-\lambda_2)e_8\\
	[e_2,e_3]&=(\lambda_3-\lambda_4)e_8, &[e_2,e_4]&=-\lambda_4 e_7+\eta_1 e_8, &[e_3,e_4]&=-\lambda_3 e_6+\eta_2 e_7+\eta_3 e_8,\\
	[e_2,e_5]&=-e_6, &[e_3,e_5]&=-e_7, &[e_4,e_5]&=-e_8.
\end{align*}
Here, $a,b,c,d \in \mathbb{R}$ and $\lambda_j, \eta_j \in \mathbb{R}$. Since 
$[\mathfrak{a}_1, \mathfrak{a}_1^\ast] \subset \mathfrak{a}_1^\ast$ and $\mathfrak{a}_1^\ast$ is an abelian ideal, it is clear that this algebra is $2$-step nilpotent. This algebra corresponds to the first algebra $\G_{8,1}$ in Table~\ref{Symbracket}.

For the remaining cases, the calculation is in fact the computation of the cohomology group $H^2_\rho(\h, \h^\ast)$, which is determined by the Lie brackets of each algebra presented in Table~\ref{Symbracket}. Considering the second algebra in Table~\ref{Symbracket}, $\G_{8,2}$, which is exactly the Lagrangian extension of the flat Lie algebra $(\mathfrak{a}_2, \nabla)$ given in Table~\ref{4flatcomplete}, its Lie brackets are given by
\begin{align*}
	[e_1,e_2]&=ae_7+be_8, &[e_1,e_3]&=ae_6+(c-d)e_7+\lambda_1 e_8, &[e_1,e_4]&=be_6+\lambda_1 e_7-(c+\lambda_2)e_8,\\
	[e_2,e_3]&=(\lambda_3-\lambda_4)e_8, &[e_2,e_4]&=-\lambda_4 e_7+\eta_1 e_8, &[e_3,e_4]&=-\lambda_3 e_6+\eta_2 e_7+\eta_3 e_8,\\
	[e_2,e_5]&=-e_6, &[e_3,e_5]&=-e_7, &[e_4,e_5]&=e_8.
\end{align*}
Here, $a,b,c,d\in\R$, and $\lambda_j,\eta_j\in\R$. It follows that
\begin{align}
	H^2_\rho(\mathfrak{a}_1, \mathfrak{a}_1^\ast) = [\alpha'],
\end{align}
where $\alpha'$ is defined by
\begin{align*}
	\alpha'(e_1,e_2)&=ae^3+be^4, &\alpha'(e_1,e_3)&=ae^2+(c-d)e^3+\lambda_1 e^4, &\alpha'(e_1,e_4)&=be^2+\lambda_1 e^3-(c+\lambda_2)e^4,\\
	\alpha'(e_2,e_3)&=(\lambda_3-\lambda_4)e^4, &\alpha'(e_2,e_4)&=-\lambda_4 e^3+\eta_1 e^4, &\alpha'(e_3,e_4)&=-\lambda_3 e^2+\eta_2 e^3+\eta_3 e^4.
\end{align*}
The proof is thus complete.

	\end{proof}

The following result shows the isomorphism classes of eight-dimensional symplectic nilpotent Lie algebras that possess a Lagrangian ideal. This is equivalent to a complete solution of the cotangent extension problem, as stated in the Introduction (Section~$\ref{sec1}$). Together with Proposition~\ref{isosymp}, this yields a refined classification. We also refer to $\cite{AEM}$ for an illustration of this calculation in low dimensions, as well as for the case of eight-dimensional symplectic non-solvable Lie algebras $\cite{AM}$.
 
\begin{pr}\label{prSymform}
The isomorphism classes of symplectic nilpotent Lie algebras with a Lagrangian ideal are summarized in Table~$\ref{Symform}$.
{\renewcommand*{\arraystretch}{1.5}
\captionof{table}{Isomorphism classes of symplectic nilpotent Lie algebras with Lagrangian ideal.}\label{Symform}

\begin{small} 
\setlength{\tabcolsep}{10pt} 
\begin{longtable}{@{}cllllllc@{}} 
			\hline
		Algebra&Symplectic form&&Remarks\\
			\hline
$\G_{8,1}$&$\omega=e^{15}+e^{26}+e^{37}+e^{48}$&&\\
$\G_{8,2}$&$\omega=e^{15}+e^{26}+e^{37}+e^{48}$&&\\
$\G_{8,3}$&$\omega=\kappa_1 e^{12}+\kappa_2 e^{13}+e^{15}+\kappa_3 e^{23}+e^{26}+e^{37}+e^{48}$&&$\kappa_1,\kappa_2,\kappa_3\in\R$\\
$\G_{8,4}$&$\omega=e^{15}+e^{26}+e^{37}+e^{48}$&&\\
$\G_{8,5}$&$\omega=e^{15}+e^{26}+e^{37}+e^{48}$&&\\
$\G_{8,6}$&$\omega=\kappa e^{12}+ e^{15}+e^{26}+e^{37}+e^{48}$&&$\kappa\in\R$\\
$\G_{8,7}$&$\omega=\kappa e^{12}+ e^{15}+e^{26}+e^{37}+e^{48}$&&$\kappa\in\R$\\
$\G_{8,8}$&$\omega=\kappa e^{12}+ e^{15}+e^{26}+e^{37}+e^{48}$&&$\kappa\in\R$\\
$\G_{8,9}$&$\omega=\kappa e^{13}+ e^{15}+e^{26}+e^{37}+e^{48}$&&$\kappa\in\R$\\
$\G_{8,10}$&$\omega= e^{15}+e^{26}+e^{37}+e^{48}$&&\\
$\G_{8,11}$&$\omega=\kappa_1 e^{13}+ e^{15}+\kappa_2 e^{23}+e^{26}+\kappa_3 e^{34}+e^{37}+e^{48}$&&$\kappa_1,\kappa_2,(\kappa_3=-\lambda_4)\in\R$\\
$\G_{8,12}$&$\omega=\kappa_1 e^{13}+ e^{15}+\kappa_2 e^{23}+e^{26}+\kappa_3 e^{34}+e^{37}+e^{48}$&&$\kappa_1,\kappa_2,(\kappa_3=-\lambda_4)\in\R$\\
$\G_{8,13}$&$\omega=\kappa_1 e^{13}+ e^{15}+\kappa_2 e^{23}+e^{26}+\kappa_3 e^{34}+e^{37}+e^{48}$&&$\kappa_1,\kappa_2,(\kappa_3=d-\lambda_3)\in\R$\\
$\G_{8,14}$&$\omega=\kappa_1 e^{13}+ e^{15}+\kappa_2 e^{23}+e^{26}+\kappa_3 e^{34}+e^{37}+e^{48}$&&$\kappa_1,\kappa_2,(\kappa_3=d-\lambda_3)\in\R$\\
$\G_{8,15}$&$\omega= e^{15}+e^{26}+\kappa e^{34}+e^{37}+e^{48}$&&$(\kappa=-\lambda_2)\in\R$\\
$\G_{8,16}$&$\omega= e^{15}+e^{26}+\kappa e^{34}+e^{37}+e^{48}$&&$(\kappa=\lambda_2)\in\R$\\
$\G_{8,17}$&$\omega= e^{15}+e^{26}+\kappa e^{34}+e^{37}+e^{48}$&&$(\kappa=-\lambda_1)\in\R$\\
$\G_{8,18}$&$\omega= e^{15}+e^{26}+\kappa e^{34}+e^{37}+e^{48}$&&$(\kappa=-\lambda_2)\in\R$\\
$\G_{8,19}$&$\omega= e^{15}+e^{26}+\kappa e^{34}+e^{37}+e^{48}$&&$(\kappa=-\lambda_3)\in\R$\\
$\G_{8,20}$&$\omega=\kappa_1 e^{13}+ e^{15}+e^{26}+\kappa_2 e^{34}+e^{37}+e^{48}$&&$\kappa_1,(\kappa_2=-\lambda)\in\R$\\
$\G_{8,21}$&$\omega= e^{15}+e^{26}+\kappa e^{34}+e^{37}+e^{48}$&&$(\kappa=-\lambda_2)\in\R$\\
$\G_{8,22}$&$\omega= e^{15}+e^{26}+\kappa e^{34}+e^{37}+e^{48}$&&$(\kappa=-\lambda_2)\in\R$\\
$\G_{8,23}$&$\omega= e^{15}+e^{26}+\kappa e^{34}+e^{37}+e^{48}$&&$(\kappa=-\lambda_2)\in\R$\\
$\G_{8,24}$&$\omega= e^{15}+\kappa_1 e^{23}+e^{26}+\kappa_2 e^{34}+e^{37}+e^{48}$&&$\kappa_1,(\kappa_2=-\lambda)\in\R$\\
$\G_{8,25}$&$\omega=\kappa_1e^{13}+ e^{15}+e^{26}+\kappa_2 e^{34}+e^{37}+e^{48}$&&$(\kappa_1,\kappa_2=-\lambda_2)\in\R$\\
$\G_{8,26}$&$\omega= e^{15}+\kappa_1e^{23}+e^{26}+\kappa_2 e^{34}+e^{37}+e^{48}$&&$(\kappa_1,\kappa_2=d)\in\R$\\
$\G_{8,27}$&$\omega= e^{15}+\kappa_1e^{23}+e^{26}+\kappa_2 e^{34}+e^{37}+e^{48}$&&$(\kappa_1,\kappa_2=-d)\in\R$\\
$\G_{8,28}$&$\omega=\kappa_1e^{13}+ e^{15}+\kappa_2e^{23}+e^{26}+e^{37}+e^{48}$&&$\kappa_1,\kappa_2\in\R$\\
$\G_{8,29}$&$\omega=\kappa e^{13}+ e^{15}+e^{26}+e^{37}+e^{48}$&&$\kappa\in\R$\\
$\G_{8,30}$&$\omega= e^{15}+e^{26}+\kappa e^{34}+e^{37}+e^{48}$&&$(\kappa=c)\in\R$\\
$\G_{8,31}$&$\omega= \kappa e^{13}+e^{15}+e^{26}+e^{37}+e^{48}$&&$\kappa\in\R$\\
$\G_{8,32}$&$\omega= \kappa_1 e^{13}+\kappa_2 e^{14}+e^{15}+e^{26}+\kappa_3 e^{34}+e^{37}+e^{48}$&&$\kappa_1,\kappa_2,(\kappa_3=-d)\in\R$\\
$\G_{8,33}$&$\omega= \kappa_1 e^{13}+e^{15}+\kappa_2 e^{23}+e^{26}+\kappa_3 e^{34}+e^{37}+e^{48}$&&$\kappa_1,\kappa_2,(\kappa_3=-c)\in\R$\\
$\G_{8,34}$&$\omega= e^{15}+e^{26}+\kappa e^{34}+e^{37}+e^{48}$&&$(\kappa=d)\in\R$\\
$\G_{8,35}$&$\omega= e^{15}+\kappa_1 e^{24}+e^{26}+\kappa_2 e^{34}+e^{37}+e^{48}$&&$\kappa_1,(\kappa_2=-c)\in\R$\\
$\G_{8,36}$&$\omega= e^{15}+e^{26}+\kappa e^{34}+e^{37}+e^{48}$&&$(\kappa=-c)\in\R$\\
$\G_{8,37}$&$\omega= e^{15}+\kappa_1 e^{24}+e^{26}+\kappa_2 e^{34}+e^{37}+e^{48}$&&$\kappa_1,(\kappa_2=-c)\in\R$\\
$\G_{8,38}$&$\omega= e^{15}+\kappa_1 e^{24}+e^{26}+\kappa_2 e^{34}+e^{37}+e^{48}$&&$\kappa_1,(\kappa_2=-c)\in\R$\\
$\G_{8,39}$&$\omega= e^{15}+e^{26}+\kappa e^{34}+e^{37}+e^{48}$&&$(\kappa=-\lambda_4)\in\R$\\
$\G_{8,40}$&$\omega= e^{15}+e^{26}+\kappa e^{34}+e^{37}+e^{48}$&&$(\kappa=d-\lambda_4)\in\R$\\
$\G_{8,41}$&$\omega= e^{15}+e^{26}+\kappa e^{34}+e^{37}+e^{48}$&&$(\kappa=-\eta_1)\in\R$\\
$\G_{8,42}$&$\omega= e^{15}+e^{26}+\kappa e^{34}+e^{37}+e^{48}$&&$(\kappa=-\lambda_3)\in\R$\\
$\G_{8,43}$&$\omega= e^{15}+e^{26}+\kappa e^{34}+e^{37}+e^{48}$&&$(\kappa=-\lambda_4)\in\R$\\
$\G_{8,44}$&$\omega= e^{15}+e^{26}+\kappa e^{34}+e^{37}+e^{48}$&&$(\kappa=-\eta_1)\in\R$\\
$\G_{8,45}$&$\omega=\kappa_1 e^{13}+ e^{15}+e^{26}+\kappa_2 e^{34}+e^{37}+e^{48}$&&$\kappa_1,(\kappa_2=-\lambda_4)\in\R,~\kappa_1=0$ if $\mu\neq\frac{9}{4}$\\
$\G_{8,46}$&$\omega= e^{15}+e^{26}+\kappa e^{34}+e^{37}+e^{48}$&&$(\kappa=-\eta_1)\in\R$\\
$\G_{8,47}$&$\omega= e^{15}+e^{26}+\kappa e^{34}+e^{37}+e^{48}$&&$(\kappa=-\lambda_4)\in\R$\\
$\G_{8,48}$&$\omega= e^{15}+e^{26}+\kappa e^{34}+e^{37}+e^{48}$&&$(\kappa=-\lambda_4)\in\R$\\
$\G_{8,49}$&$\omega= e^{15}+e^{26}+\kappa e^{34}+e^{37}+e^{48}$&&$(\kappa=-\lambda_4)\in\R$\\
$\G_{8,50}$&$\omega= e^{15}+e^{26}+\kappa e^{34}+e^{37}+e^{48}$&&$(\kappa=\lambda_1)\in\R$\\
$\G_{8,51}$&$\omega= e^{15}+\kappa_1 e^{23}+e^{26}+\kappa_2 e^{34}+e^{37}+e^{48}$&&$\kappa_1,(\kappa_2=-\lambda_2)\in\R$\\
$\G_{8,52}$&$\omega= e^{15}+\kappa e^{24}+e^{26}+e^{37}+e^{48}$&&$\kappa\in\R$\\
$\G_{8,53}$&$\omega= e^{15}+\kappa_1 e^{23}+e^{26}+\kappa_2 e^{34}+e^{37}+e^{48}$&&$\kappa_1,(\kappa_2=-\lambda_2)\in\R$\\
$\G_{8,54}$&$\omega= e^{15}+e^{26}+\kappa e^{34}+e^{37}+e^{48}$&&$(\kappa=\lambda_1)\in\R$\\
$\G_{8,55}$&$\omega= e^{15}+e^{26}+e^{37}+e^{48}$&&\\
$\G_{8,56}$&$\omega= e^{15}+\kappa e^{23}+e^{26}+e^{37}+e^{48}$&&$(\kappa=-b)\in\R$\\
$\G_{8,57}$&$\omega=\kappa_1 e^{12}+ e^{15}+\kappa_2 e^{23}+e^{26}+\kappa_3e^{34}+e^{37}+e^{48}$&&$\kappa_1,(\kappa_2=-\frac{2a}{\mu+1}-b),\kappa_3\in\R,~\kappa_1=0$ if $\mu\neq-9$\\
$\G_{8,58}$&$\omega= e^{15}+\kappa e^{23}+e^{26}+e^{37}+e^{48}$&&$(\kappa=b-c)\in\R$\\
$\G_{8,59}$&$\omega=\kappa_1 e^{12}+ e^{15}+\kappa_2 e^{23}+e^{26}+\kappa_3e^{34}+e^{37}+e^{48}$&&$\kappa_1,(\kappa_2=\frac{a}{2(\mu+1)}-b),\kappa_3\in\R,~\kappa_1=0$ if $\mu\neq-9$\\
$\G_{8,60}$&$\omega= e^{15}+\kappa e^{23}+e^{26}+e^{37}+e^{48}$&&$(\kappa=b-c)\in\R$\\
$\G_{8,61}$&$\omega=\kappa_1 e^{12}+ e^{15}+\kappa_2 e^{23}+e^{26}+\kappa_3e^{34}+e^{37}+e^{48}$&&$\kappa_1,(\kappa_2=-\frac{2a}{\mu+1}-b),\kappa_3\in\R,~\kappa_1=0$ if $\mu\neq-9$\\
$\G_{8,62}$&$\omega= e^{15}+\kappa e^{23}+e^{26}+e^{37}+e^{48}$&&$(\kappa=b-c)\in\R$\\
$\G_{8,63}$&$\omega= e^{15}+\kappa e^{23}+e^{26}+e^{37}+e^{48}$&&$(\kappa=-c)\in\R$\\
$\G_{8,64}$&$\omega= e^{15}+\kappa e^{23}+e^{26}+e^{37}+e^{48}$&&$(\kappa=b-c)\in\R$\\
$\G_{8,65}$&$\omega=\kappa_1 e^{13}+ e^{15}+\kappa_2 e^{23}+e^{26}+e^{37}+e^{48}$&&$(\kappa_1=-c,\kappa_2=-\lambda_1)\in\R$\\
$\G_{8,66}$&$\omega=e^{15}+\kappa_1 e^{23}+e^{26}+\kappa_2 e^{34}+e^{37}+e^{48}$&&$(\kappa_1=c-d),\kappa_2\in\R$\\
$\G_{8,67}$&$\omega=e^{15}+\kappa_1 e^{23}+e^{26}+\kappa_2 e^{34}+e^{37}+e^{48}$&&$(\kappa_1=c-d),\kappa_2\in\R$\\
$\G_{8,68}$&$\omega=e^{15}+\kappa_1 e^{23}+e^{26}+\kappa_2 e^{34}+e^{37}+e^{48}$&&$(\kappa_1=-b),\kappa_2\in\R$\\
$\G_{8,69}$&$\omega=e^{15}+\kappa e^{23}+e^{26}+e^{37}+e^{48}$&&$(\kappa=b-c)$\\
$\G_{8,70}$&$\omega=e^{15}+\kappa_1 e^{23}+e^{26}+\kappa_2e^{34}+e^{37}+e^{48}$&&$(\kappa_1=b-c),\kappa_2\in\R,~\kappa_2=0$ if $\mu\neq-1$\\
$\G_{8,71}$&$\omega=e^{15}+\kappa e^{23}+e^{26}+e^{37}+e^{48}$&&$(\kappa=c-d)\in\R$\\
$\G_{8,72}$&$\omega=e^{15}+\kappa e^{23}+e^{26}+e^{37}+e^{48}$&&$(\kappa=c-d)\in\R$\\
$\G_{8,73}$&$\omega=e^{15}+\kappa e^{23}+e^{26}+e^{37}+e^{48}$&&$(\kappa=c-d-\mu b)\in\R$\\
$\G_{8,74}$&$\omega=\kappa_1 e^{13}+e^{15}+\kappa_2 e^{23}+e^{26}+e^{37}+e^{48}$&&$(\kappa_1=-b,\kappa_2=c+2a-\lambda)\in\R$\\
$\G_{8,75}$&$\omega=e^{15}+\kappa e^{23}+e^{26}+e^{37}+e^{48}$&&$(\kappa=b-c)\in\R$\\
$\G_{8,76}$&$\omega=e^{15}+\kappa e^{23}+e^{26}+e^{37}+e^{48}$&&$(\kappa=a-c)\in\R$\\
$\G_{8,77}$&$\omega=\kappa_1e^{13}+e^{15}+\kappa_2 e^{23}+e^{26}+e^{37}+e^{48}$&&$(\kappa_1=-\kappa_2=d)\in\R$\\
$\G_{8,78}$&$\omega=e^{15}+\kappa e^{23}+e^{26}+e^{37}+e^{48}$&&$(\kappa=b-c)\in\R$\\
$\G_{8,79}$&$\omega=e^{15}+\kappa e^{23}+e^{26}+e^{37}+e^{48}$&&$(\kappa=b-c)\in\R$\\
$\G_{8,80}$&$\omega=\kappa_1e^{13}+e^{15}+\kappa_2 e^{23}+e^{26}+e^{37}+e^{48}$&&$(\kappa_1=c,\kappa_2=d-\lambda)\in\R$\\
$\G_{8,81}$&$\omega=e^{15}+\kappa e^{23}+e^{26}+e^{37}+e^{48}$&&$(\kappa=b-c)\in\R$\\
$\G_{8,82}$&$\omega=e^{15}+\kappa e^{23}+e^{26}+e^{37}+e^{48}$&&$(\kappa=c-d)\in\R$\\
$\G_{8,83}$&$\omega=\kappa_1e^{13}+e^{15}+\kappa_2 e^{23}+e^{26}+\kappa_3 e^{34}+e^{37}+e^{48}$&&$(\kappa_1=c,\kappa_2=d-\lambda,\kappa_3)\in\R$,~$\kappa_3=0$ if $\mu_1=\frac{2}{3}(\mu^2-1)+\mu$\\
$\G_{8,84}$&$\omega=e^{15}+\kappa_2 e^{23}+e^{26}+\kappa_3 e^{34}+e^{37}+e^{48}$&&$(\kappa=b-c)\in\R$\\
$\G_{8,85}$&$\omega=\kappa_1e^{12}+e^{15}+\kappa_2 e^{23}+e^{26}+\kappa_3 e^{34}+\kappa_3e^{34}+e^{37}+e^{48}$&&$\kappa_1,\kappa_2,(\kappa_2=-c)\in\R$,~$\kappa_1=0$ if $\mu\neq-2$,~$\kappa_3=0$ if $\mu\neq\frac{1}{2}$ or $-1$\\
$\G_{8,86}$&$\omega=\kappa_1e^{12}+e^{15}+\kappa_2 e^{23}+e^{26}+e^{37}+e^{48}$&&$\kappa_1,(\kappa_2=b-c)\in\R$, $\kappa_1=0$ if $3\mu_1^2-3\mu_1-2\neq0$\\
$\G_{8,87}$&$\omega=e^{15}+\kappa e^{23}+e^{26}+e^{37}+e^{48}$&&$(\kappa=c-d)\in\R$\\
$\G_{8,88}$&$\omega=e^{15}+\kappa_1 e^{23}+e^{26}+\kappa_2e^{34}+e^{37}+e^{48}$&&$(\kappa=c-d),\kappa_2\in\R$,~$\kappa_2=0$ if $2\mu^2+3\mu+1\neq0$\\
$\G_{8,89}$&$\omega=\kappa_1e^{13}+e^{15}+\kappa_2 e^{23}+e^{26}+e^{37}+e^{48}$&&$(\kappa_1=b,\kappa_2=c-d)\in\R$\\
$\G_{8,90}$&$\omega=\kappa_1e^{13}+e^{15}+\kappa_2 e^{23}+e^{26}+e^{37}+e^{48}$&&$(\kappa_1=-c,\kappa_2=a-d)\in\R$\\
$\G_{8,91}$&$\omega=e^{15}+\kappa e^{23}+e^{26}+e^{37}+e^{48}$&&$(\kappa=c-\lambda)\in\R$\\
$\G_{8,92}$&$\omega=\kappa_1e^{12}+e^{15}+\kappa e^{23}+e^{26}+e^{37}+e^{48}$&&$\kappa_1,(\kappa_2=-\frac{\mu c+b(\mu+1)}{\mu})$, $\kappa_1=0$ if $\mu\neq-\frac{2}{3}$\\
$\G_{8,93}$&$\omega=e^{15}+\kappa e^{23}+e^{26}+e^{37}+e^{48}$&&$(\kappa=-d-\frac{8c}{5})\in\R$\\
$\G_{8,94}$&$\omega=\kappa_1e^{12}+e^{15}+\kappa_2 e^{23}+e^{26}+e^{37}+e^{48}$&&$\kappa_1,(\kappa_2=-\frac{\mu c+a+b(\mu+1)}{\mu})$, $\kappa_1=0$ if $\mu\neq-\frac{2}{3}$\\
$\G_{8,95}$&$\omega=e^{15}+\kappa e^{23}+e^{26}+e^{37}+e^{48}$&&$(\kappa=b-d-\frac{1}{5}(3a+8c))\in\R$
\\\hline
\end{longtable}
			\end{small}	
			}
\setcounter{table}{2}
\end{pr}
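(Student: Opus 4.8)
The plan is to apply Proposition~\ref{isosymp} to each of the Lagrangian extension triples $(\h, \nabla, \alpha)$ produced in Proposition~\ref{prSymbracket}. By Corollary~\ref{one-to-one} together with Remark~\ref{remarksymp}, the symplectic isomorphism class of $(\G_{8,j}, \omega)$ is determined entirely by the Lagrangian extension class $[\alpha]_L \in H^2_{L,\rho}(\h, \h^\ast)$, so it suffices to compute, for each $\G_{8,j}$, the representative produced by Proposition~\ref{isosymp}. Fixing the basis $\{e_1, e_2, e_3, e_4\}$ of $\h$ and the dual basis $\{e_5, e_6, e_7, e_8\}$ of $\h^\ast$ identified through the canonical pairing, the semidirect-product form reads $e^{15} + e^{26} + e^{37} + e^{48}$ in the convention of Table~\ref{Symform}, and Proposition~\ref{isosymp} gives
\[\omega_{[\alpha]_L}(x + \xi, y) = \Lambda(x, y) + \omega(x, \xi), \quad \Lambda = (\sigma - \sigma_L) - {}^t(\sigma - \sigma_L),\]
where the correction $\Lambda \in \mathrm{Hom}(\bigwedge^2 \h, \R)$ supplies precisely the additional terms $\kappa_i\, e^{ij}$ with $i, j \in \{1, 2, 3, 4\}$ recorded in the table.

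Concretely, for each algebra I would proceed in three steps. First, recover from Table~\ref{Symbracket} the Lie-algebra–normalized cocycle $\overline{\alpha} = \alpha - \partial_\rho \sigma$, the representative of $[\alpha] \in H^2_\rho(\h, \h^\ast)$ that already fixes the bracket (via Proposition~\ref{cohooro}). Second, select $\sigma_L \in C^1_L(\h, \h^\ast)$ so that $\widehat{\alpha} = \alpha - \partial_\rho \sigma_L$ is a Lagrangian representative, equivalently so that $\widehat{\alpha} - \overline{\alpha} = \partial_\rho(\sigma - \sigma_L)$; this is a linear problem for the symmetric part of the normalizing cochain subject to $\partial_\rho C^1_L \subseteq C^2_L \cap Z^2_\rho$. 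Third, antisymmetrize $\sigma - \sigma_L$ to read off $\Lambda$, and hence the coefficients $\kappa_i$. The Remarks column then records the outcome: certain $\kappa_i$ are forced to equal explicit structure constants (for instance $\kappa_3 = -\lambda_4$ in $\G_{8,11}$, or $\kappa = c$ in $\G_{8,30}$), reflecting that the bracket obstructs any symmetric choice of $\sigma$, whereas the remaining $\kappa_i$ vary freely over $\R$ and parametrize genuinely non-symplectomorphic forms on one and the same Lie algebra.

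The main obstacle is the determination, for each of the underlying four-dimensional flat Lie algebras appearing in Table~\ref{4flatcomplete}, of the Lagrangian cohomology group $H^2_{L,\rho}(\h, \h^\ast)$ together with the natural map $\ker\big(H^2_{L,\rho}(\h,\h^\ast) \to H^2_\rho(\h,\h^\ast)\big)$. As observed after the construction of $H^2_{L,\rho}$, this map is generally not injective, and its kernel is exactly what yields the free moduli $\kappa_i$: two Lagrangian extensions with the same class in $H^2_\rho$ — hence isomorphic as Lie algebras by Proposition~\ref{cohooro} — may still fail to be symplectically equivalent. Separating the free parameters from the constrained ones therefore amounts to computing this kernel case by case, which is where essentially all the labor resides. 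A secondary subtlety is posed by the one- and two-parameter families carrying moduli $\mu, \mu_1, t$: one must check that the normalization of $\sigma_L$ can be performed uniformly in the parameter, isolating the exceptional values (such as $\mu = -9$ in $\G_{8,57}$, forcing $\kappa_1 = 0$) at which the dimension of the Lagrangian cohomology jumps. Assembling the resulting forms $\omega_{[\alpha]_L}$ across all cases produces Table~\ref{Symform} and completes the classification.
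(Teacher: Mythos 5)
Your proposal follows essentially the same route as the paper: the authors likewise obtain Table~\ref{Symform} by combining the bracket classification of Proposition~\ref{prSymbracket} (cocycles normalized modulo $\partial_\rho C^1(\h,\h^\ast)$ via Proposition~\ref{cohooro}) with Proposition~\ref{isosymp}, so that the correction term $\Lambda=(\sigma-\sigma_L)-{}^t(\sigma-\sigma_L)$ produces exactly the extra $\kappa_i\,e^{ij}$ terms, the free $\kappa_i$ parametrizing the kernel of $H^2_{L,\rho}(\h,\h^\ast)\to H^2_\rho(\h,\h^\ast)$ computed case by case over the flat Lie algebras of Table~\ref{4flatcomplete}. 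Your treatment of the parameter families and exceptional values (e.g.\ $\mu=-9$, $\mu=\tfrac{9}{4}$) matches the Remarks column of the paper's table, so the proposal is correct and coincides with the paper's (largely implicit, computational) argument.
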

\begin{proof}
The general outline of the proof is as follows. We consider a flat Lie algebra $(\h, \nabla)$, selected from those described in Proposition~\ref{4flat} (Table~\ref{4flatcomplete}). This flat Lie algebra has a basis $B = \{e_1, e_2, e_3,e_4\}$. Let $(\G_\nabla, \alpha, \omega)$ be the Lagrangian extension of the flat Lie algebra $(\h, \nabla)$ with respect to $\alpha \in Z^2_{L,\rho}(\h, \h^\ast)$. Let $\sigma \in \mathcal{C}^1(\h, \h^\ast)$ and $\alpha' \in Z^2_{L,\rho}(\h, \h^\ast)$ be such that $\alpha' = \alpha - \partial_\rho \sigma$. In this case, $\G_{\nabla, \alpha} \cong \G_{\nabla, \alpha'}$, and we focus on the Lie algebra $\G_{\nabla, \alpha'}$ in what follows. This Lie algebra is isomorphic to a family of Lie algebras described in Proposition~\ref{prSymbracket} (Table~\ref{Symbracket}), and their Lie brackets are given by~(\ref{Bra1}) and~(\ref{Bra2}). By this procedure, we collect all Lie algebras that are isomorphic to $\G_{\nabla, \alpha'}$. This step is described in the proof of Proposition~\ref{prSymbracket}. We now proceed to classify symplectic forms on $\G_{\nabla, \alpha'}$ that admit $\h^\ast$ as a Lagrangian ideal.  This is achieved by establishing a one-to-one correspondence between Lagrangian extensions over flat Lie algebras and the cohomology group $H^2_{L,\rho}(\h, \h^\ast)$ (see Proposition~\ref{isosymp}). There is a natural (not necessarily injective) map from $H^2_{L,\nabla}(\h, \h^\ast)$ to $H^2_\rho(\h, \h^\ast)$. We illustrate this with two examples.

We consider the first example, which corresponds to the first algebra given in Proposition~$\ref{prSymbracket}$ and Table~$\ref{Symbracket}$, and which is treated in detail in the proof of Proposition~5. Let $(\mathfrak{a}_1, \nabla)$ (see Table~$\ref{4flatcomplete}$) be a flat Lie algebra, and let $(\G_{\nabla,\alpha}, \omega)$ be the Lagrangian extension of this flat Lie algebra. According to the proof of Proposition~$\ref{prSymbracket}$, $(\G_{\nabla,\alpha}, \omega)$ is isomorphic to the Lie algebra $(\G_{\nabla,\alpha'}, \omega)$ whose Lie brackets are given in the proof of the same proposition. The isomorphism between the Lie algebras $(\G_{\nabla,\alpha}, \omega)$ and $(\G_{\nabla,\alpha'}, \omega)$ is given by the following map:
\begin{align}
	\Psi: \G_{\nabla,\alpha} \longrightarrow \G_{\nabla,\alpha'}, \quad (x,\xi) \mapsto (x, \xi + \sigma(x)),
\end{align}
where $\sigma$ is given under the assumptions in~\eqref{assmpp}.

Next, we compute the Lagrangian extension cohomology group $H^2_{L,\rho}(\mathfrak{a}_1, \mathfrak{a}_1^\ast)$ for the flat Lie algebra $(\mathfrak{a}_1, \nabla)$. Let $\sigma_L \in \mathcal{C}^1_L(\mathfrak{a}_1, \mathfrak{a}_1^\ast)$ be given by
	\begin{align}
	\sigma(e_i)&=\mathop{\resizebox{1.3\width}{!}{$\sum$}}^4\limits_{j=1}\sigma_{ij}e^j,\quad\sigma_{ij}\in\R,
\end{align}
where $\sigma_{ij} = \sigma_{ji}$ for all $i,j = 1,2,3,4$. 
We then compute the differential of $\sigma$, denoted by $\partial_\rho \sigma$, with respect to the dual representation $\rho$ of $\nabla$. It is given by
\begin{align}
	\big(\partial_\rho \sigma\big)(x,y)
	= \rho(x)\sigma(y) - \rho(y)\sigma(x) - \sigma\big([x,y]\big),
	\qquad \text{for all } x,y \in \mathfrak{a}_1.
\end{align}
Consequently, we obtain
\begin{align*}
	\big(\partial_\rho\sigma_L\big)(e_1,e_2)&=\sigma_{11}e^2,&\big(\partial_\rho\sigma_L\big)(e_1,e_3)&=\sigma_{11}e^3,&\big(\partial_\rho\sigma_L\big)(e_1,e_4)&=\sigma_{11}e^4,\\
	\big(\partial_\rho\sigma_L\big)(e_2,e_3)&=-\sigma_{13}e^2+\sigma_{12}e^3,&\big(\partial_\rho\sigma_L\big)(e_2,e_4)&=-\sigma_{14}e^2+\sigma_{12}e^4,&\big(\partial_\rho\sigma_L\big)(e_3,e_4)&=-\sigma_{14}e^3+\sigma_{13}e^4.
\end{align*}
Define $\alpha'' :=\alpha-\partial_\rho \sigma_L$. It follows that
\begin{align}
	H^2_{L,\rho}(\mathfrak{a}_1, \mathfrak{a}_1^\ast) = [\alpha''].
\end{align}
According to Proposition~\ref{isosymp}, the isomorphism class of the symplectic form $\omega_{[\alpha]_L}$ with a Lagrangian ideal is uniquely determined by the cohomology class $[\alpha]_L \in H^2_{L,\rho}( \mathfrak{a}_1,  \mathfrak{a}_1^\ast)$. Moreover, the symplectic form can be written as
\begin{align*}
	\omega_{[\alpha]_L}(x + \xi, y)
	&= \Lambda(x,y) + \omega(x,\xi),
	\quad \text{for all } x,y \in  \mathfrak{a}_1,\ \xi \in  \mathfrak{a}_1^\ast,
\end{align*}
where
\[
\Lambda = (\sigma - \sigma_L) - {}^t(\sigma - \sigma_L)
\in \mathrm{Hom}\bigl(\wedge^2  \mathfrak{a}_1, \mathbb{R}\bigr),
\]
and where $\alpha' = \alpha - \partial_\rho \sigma$ and $\alpha'' = \alpha - \partial_\rho \sigma_L$ for some $\sigma \in C^1( \mathfrak{a}_1,  \mathfrak{a}_1^\ast)$ and some $\sigma_L \in C^1_L( \mathfrak{a}_1,  \mathfrak{a}_1^\ast)$. In this case, it is clear that $\Lambda = 0$. Thus, there is a single isomorphism class of symplectic forms, and consequently the map
\[
H^2_{L,\rho}(\mathfrak{a}_1, \mathfrak{a}_1^\ast)
\longrightarrow
H^2_{\rho}(\mathfrak{a}_1, \mathfrak{a}_1^\ast)
\]
is injective. By identifying the dual basis $\{e^1, e^2, e^3, e^4\}$ with $\{e_5, e_6, e_7, e_8\}$, the symplectic form defined by $\omega(\xi, x) = \xi(x)$ for all $x \in \mathfrak{a}_1$ and $\xi \in \mathfrak{a}_1^\ast$ becomes
\begin{align}
	\omega = e^{15} + e^{26} + e^{37} + e^{48}.
\end{align}
The second Lie algebra $\G_{8,2}$, which is a Lagrangian extension of the flat Lie algebra $(\mathfrak{a}_2, \nabla)$ given in Proposition~\ref{4flat} (Table~\ref{4flatcomplete}), also falls into the case where the map
\[
H^2_{L,\rho}(\mathfrak{a}_2, \mathfrak{a}_2^\ast)
\longrightarrow
H^2_{\rho}(\mathfrak{a}_2, \mathfrak{a}_2^\ast)
\]
is injective.

We now turn to the second example, considering the flat Lie algebra $( \mathfrak{a}_3, \nabla)$ described in Proposition~\ref{4flat} (Table~\ref{4flatcomplete}). The flat torsion-free connection $\nabla$ is defined by
\begin{align}
	\nabla_{e_2}e_4&=e_1,\quad\nabla_{e_3}e_3=e_1,\quad\nabla_{e_3}e_4=e_2,\quad\nabla_{e_4}e_3=e_2,\quad\nabla_{e_4}e_4=e_3.
\end{align}
	Next, we compute the ordinary Lie algebra cohomology group $H^2_\rho(\mathfrak{a}_3,\mathfrak{a}_3^\ast)$
associated with the representation $\rho$.  

Let $(\G_{\nabla,\alpha},\omega)$ be the Lagrangian extension of the flat Lie	algebra $(\mathfrak{a}_3,\nabla)$ with respect to
$\alpha \in Z^2_{\rho}(\mathfrak{a}_3,\mathfrak{a}_3^\ast)$.
Let $B^\ast=\{e^1,e^2,e^3,e^4\}$ be the dual basis of $B$, which generates
$\mathfrak{a}_3^\ast$.
Let $\alpha \in C^2(\mathfrak{a}_3,\mathfrak{a}_3^\ast)$ be given by
\begin{align}
	\alpha(e_i,e_j)&=	\mathop{\resizebox{1.3\width}{!}{$\sum$}}^4\limits_{k=1}\alpha_{ijk}e^k,\quad 1\leq i<j\leq 4,~~\alpha_{ijk}\in\R.
\end{align}
Therefore, $\alpha \in Z^2_{\rho}(\mathfrak{a}_3,\mathfrak{a}_3^\ast)$
if and only if $\alpha$ has the following form:
\begin{align}
	\alpha(e_i,e_j)&=	\mathop{\resizebox{1.3\width}{!}{$\sum$}}^4\limits_{k=1}\alpha_{ijk}e^k,\quad 1\leq i<j\leq 4,~~\alpha_{ijk}\in\R.
\end{align}
such that 
\begin{align*}
	\alpha_{121}&=0, &\alpha_{122}&=0, &\alpha_{123}&=\alpha_{141}, &\alpha_{131}&=0, &\alpha_{132}&=\alpha_{141},\\
	\alpha_{133}&=\alpha_{142}, &\alpha_{231}&=0, &\alpha_{232}&=\alpha_{241}, &\alpha_{233}&=\alpha_{242}-\alpha_{341}.
\end{align*}
Moreover, $\alpha\in Z^2_\rho( \mathfrak{a}_3, \mathfrak{a}_3^\ast)$ satisfies the Jacobi identity; that is,
\begin{align}
	\mathop{\resizebox{1.3\width}{!}{$\sum$}}\limits_{\mathrm{cycl}}\alpha(x,y)(z) = 0
\end{align}
for all $x,y,z \in \mathfrak{a}_3$, if and only if
\begin{align*}
	\alpha_{124}&=\alpha_{142}-\alpha_{241}, &\alpha_{134}&=\alpha_{143}-\alpha_{341}, &\alpha_{234}&=\alpha_{243}-\alpha_{342}.
\end{align*}
Let now $\sigma \in \mathcal{C}^1(\mathfrak{a}_3,\mathfrak{a}_3^\ast)$ be given, with respect to the basis $\{e^1, e^2, e^3, e^4\}$, by
\begin{align}
	\sigma(e_i)&=\mathop{\resizebox{1.3\width}{!}{$\sum$}}^4\limits_{j=1}\sigma_{ij}e^j,\quad\sigma_{ij}\in\R.
\end{align}
Next, we compute the differential of $\sigma$, denoted $\partial_\rho \sigma$, with respect to the dual representation $\rho$ of $\nabla$, that is,
\begin{align*}
	\big(\partial_\rho \sigma\big)(x,y)
	&= \rho(x)\sigma(y) - \rho(y)\sigma(x) - \sigma\big([x,y]\big)\\
	&=\rho(x)\sigma(y) - \rho(y)\sigma(x) 
\end{align*}
for all $ x,y \in \mathfrak{a}_3$. Therefore, we obtain
\begin{align*}
	\big(\partial_\rho\sigma\big)(e_1,e_2)&=\sigma_{11}e^4,&\big(\partial_\rho\sigma\big)(e_1,e_3)&=\sigma_{11}e^3+\sigma_{12}e^4,\\
	\big(\partial_\rho\sigma\big)(e_1,e_4)&=\sigma_{11}e^2+\sigma_{12}e^3+\sigma_{13}e^4,
	&\big(\partial_\rho\sigma\big)(e_2,e_3)&=\sigma_{21}e^3+(\sigma_{22}-\sigma_{31})e^4,\\
	\big(\partial_\rho\sigma\big)(e_2,e_4)&=\sigma_{21}e^2+\sigma_{22}e^3+(\sigma_{23}-\sigma_{41}) e^4,&\big(\partial_\rho\sigma\big)(e_3,e_4)&=\sigma_{31}e^2+(\sigma_{32}-\sigma_{41})e^3+(\sigma_{33}-\sigma_{42})e^4.
\end{align*}
	Define $\alpha' :=\alpha-  \partial_\rho \sigma$. Then $\alpha'$ is given by
\begin{align*}
	\alpha'(e_1,e_2)&=\alpha_{141}e^3+(\alpha_{142}-\alpha_{241}-\sigma_{11})e^4,\\ \alpha'(e_1,e_3)&=\alpha_{141}e^2+(\alpha_{142}-\sigma_{11})e^3+(\alpha_{143}-\alpha_{341}-\sigma_{12})e^4,\\
	\alpha'(e_1,e_4)&=\alpha_{141}e^1+(\alpha_{142}-\sigma_{11})e^2+(\alpha_{143}-\sigma_{12})e^3+(\alpha_{144}-\sigma_{13})e^4\\\alpha'(e_2,e_3)&=\alpha_{241}e^2+(\alpha_{242}-\alpha_{341}-\sigma_{21})e^2+(\alpha_{243}-\alpha_{342}+\sigma_{31}-\sigma_{22})e^4,\\
	\alpha'(e_2,e_4)&=\alpha_{241}e^1+(\alpha_{242}-\sigma_{21})e^2+(\alpha_{243}-\sigma_{22})e^3+(\alpha_{244}+\sigma_{41}-\sigma_{23})e^4, \\
	\alpha'(e_3,e_4)&=\alpha_{341}e^1+(\alpha_{342}-\sigma_{31})e^2+(\alpha_{343}+\sigma_{41}-\sigma_{32})e^3+(\alpha_{344}+\sigma_{42}-\sigma_{33})e^4.
\end{align*}
Since $\sigma \in \mathcal{C}^1(\mathfrak{a}_3,\mathfrak{a}_3^\ast)$ can be chosen arbitrarily, we consider the following assumptions:
\begin{align}\label{assmpp2}
	\begin{split}
	\sigma_{11}&=\alpha_{142}-\alpha_{241}, \quad\sigma_{12}=\alpha_{143}-\alpha_{341}, \quad\sigma_{13}=\alpha_{144}, \quad\sigma_{21}=\alpha_{242}-\alpha_{341},\\
	\sigma_{22}&=\alpha_{243}+\alpha_{342}-\alpha_{342}, \quad\sigma_{31}=\alpha_{342}, \quad\sigma_{23}=\alpha_{244}+\sigma_{41}, \quad\sigma_{32}=\alpha_{343}+\sigma_{41},\\
	\sigma_{33}&=\alpha_{344}+\sigma_{42}.
	\end{split}
\end{align}
Hence, $\alpha'$ takes the form
\begin{align*}
	\alpha'(e_1,e_2)&=\alpha_{141}e^3, &\alpha'(e_1,e_3)&=\alpha_{141}e^2+\alpha_{241}e^3,\\
\alpha'(e_1,e_4)&=\alpha_{141}e^1+\alpha_{241}e^2+\alpha_{341}e^3, &\alpha'(e_2,e_3)&=\alpha_{241}e^2,\\
	\alpha'(e_2,e_4)&=\alpha_{241}e^1+\alpha_{341}e^2, &
	\alpha'(e_3,e_4)&=\alpha_{341}e^1.
\end{align*}
In other words, we have shown that
\begin{align}
	H^2_\rho(\mathfrak{a}_3,\mathfrak{a}_3^\ast) = [\alpha'].
\end{align}
According to Proposition~\ref{cohooro}, the following map
\begin{align}
	\Psi: \G_{\nabla,\alpha} \longrightarrow \G_{\nabla,\alpha'}, \quad (x,\xi) \mapsto (x, \xi + \sigma(x))
\end{align}
defines an isomorphism of Lie algebras, where $\sigma$ is given under the assumptions in~\eqref{assmpp2}.

It remains now to compute the Lie brackets associated with the cocycle $\alpha'$, using the relations~\eqref{Bra1} and~\eqref{Bra2}. Then, the Lie brackets of the Lie algebra $\G_{\nabla,\alpha'}$ are given by
\begin{align*}
	[e_1,e_2]&=\alpha_{141}e^3, &[e_1,e_3]&=\alpha_{141}e^2+\alpha_{241}e^3,\\
	[e_1,e_4]&=\alpha_{141}e^1+\alpha_{241}e^2+\alpha_{341}e^3, &[e_2,e_3]&=\alpha_{241}e^2,\\
	[e_2,e_4]&=\alpha_{241}e^1+\alpha_{341}e^2, &
	[e_3,e_4]&=\alpha_{341}e^1.\\
	[e_2,e^1]&=-e^4, &[e_3,e^1]&=-e^3,\\
	[e_3,e^2]&=-e^4, &[e_4,e^1]&=-e^2,\\
	[e_4,e^2]&=-e^3, &[e_4,e^3]&=-e^4\\
	&\alpha_{ijk}\in\R.
\end{align*}
Now, using a change of variables, adopting the following notation
\begin{align*}
	\alpha_{141}&=a, &\alpha_{241}&=b, &\alpha_{341}&=c, &a,b,c\in\R&
\end{align*}
and identifying the dual basis 
$\{e^1, e^2, e^3, e^4\}$ with $\{e_5, e_6, e_7, e_8\}$, the previous Lie algebra structures become
\begin{align*}
	[e_1,e_2]&=ae_7, &[e_1,e_3]&=ae_6+be_7,\\
	[e_1,e_4]&=ae_5+be_6+ce_7, &[e_2,e_3]&=be_6,\\
	[e_2,e_4]&=be_5+ce_6, &
	[e_3,e_4]&=ce_5.\\
	[e_2,e_5]&=-e_8, &[e_3,e_5]&=-e_7,\\
	[e_3,e_6]&=-e_8, &[e_4,e_5]&=-e_6,\\
	[e_4,e_6]&=-e_7, &[e_4,e_7]&=-e_8\\
	&a,b,c\in\R.
\end{align*}
Here, $a,b,c,d \in \mathbb{R}$ and $\lambda_j, \eta_j \in \mathbb{R}$.  This algebra corresponds to the  algebra $\G_{8,3}$ in Table~\ref{Symbracket}.

We now turn to the computation of the Lagrangian extension cohomology group
$H^2_{L,\rho}(\mathfrak{a}_3, \mathfrak{a}_3^\ast)$ associated with the flat Lie algebra $(\mathfrak{a}_3, \nabla)$.
Let $\sigma_L \in \mathcal{C}^1_L(\mathfrak{a}_3, \mathfrak{a}_3^\ast)$ be a $1$-cochain defined by
\begin{align}
	\sigma(e_i) = \mathop{\resizebox{1.3\width}{!}{$\sum$}}^4\limits_{j=1} \sigma_{ij}\, e^j,
	\qquad \sigma_{ij} \in \mathbb{R}.
\end{align}
where $\sigma_{ij} = \sigma_{ji}$ for all $i,j \in \{1,2,3,4\}$.
We next compute the differential of $\sigma_L$, denoted by $\partial_\rho \sigma_L$, with respect to the dual representation~$\rho$ induced by~$\nabla$.
As a result, we obtain
\begin{align*}
	\big(\partial_\rho \sigma_L\big)(e_1,e_2) &= \sigma_{11} e^4, &
	\big(\partial_\rho \sigma_L\big)(e_1,e_3) &= \sigma_{11} e^3+\sigma_{12}e^4, \\
	\big(\partial_\rho \sigma_L\big)(e_1,e_4) &= \sigma_{11} e^2+\sigma_{12}e^3+\sigma_{13}e^4, &\big(\partial_\rho \sigma_L\big)(e_2,e_3) &= \sigma_{12} e^3 + (\sigma_{22}-\sigma_{13} )e^4, \\
	\big(\partial_\rho \sigma_L\big)(e_2,e_4) &= \sigma_{12} e^2 + \sigma_{22} e^3+(\sigma_{23}-\sigma_{14})e^4, &
	\big(\partial_\rho \sigma_L\big)(e_3,e_4) &= \sigma_{13} e^2 +( \sigma_{23}-\sigma_{14}) e^3+(\sigma_{33}-\sigma_{24})e^4.
\end{align*}
Setting $\alpha'' := \alpha - \partial_\rho \sigma_L$, it follows that
\begin{align}
	H^2_{L,\rho}(\mathfrak{a}_3, \mathfrak{a}_3^\ast) = [\alpha''].
\end{align}

By Proposition~\ref{isosymp}, the isomorphism class of a symplectic form $\omega_{[\alpha]_L}$ admitting a Lagrangian ideal is uniquely determined by the cohomology class
$[\alpha]_L \in H^2_{L,\rho}(\mathfrak{a}_3, \mathfrak{a}_3^\ast)$.
Furthermore, the associated symplectic form can be expressed as
\begin{align*}
	\omega_{[\alpha]_L}(x + \xi, y)
	&= \Lambda(x,y) + \omega(x,\xi),
	\quad \text{for all } x,y \in \mathfrak{a}_3,\ \xi \in \mathfrak{a}_3^\ast,
\end{align*}
where
\[
\Lambda = (\sigma - \sigma_L) - {}^t(\sigma - \sigma_L)
\in \mathrm{Hom}\bigl(\wedge^2 \mathfrak{a}_3, \mathbb{R}\bigr),
\]
and where $\alpha' = \alpha - \partial_\rho \sigma$ and $\alpha'' = \alpha - \partial_\rho \sigma_L$ for some
$\sigma \in C^1(\mathfrak{a}_3, \mathfrak{a}_3^\ast)$ and
$\sigma_L \in C^1_L(\mathfrak{a}_3, \mathfrak{a}_3^\ast)$. It remains to determine the explicit form of $\Lambda$, which is given by
\begin{align}
	\Lambda&=(\alpha_{143}-\alpha_{242})e^{12}+(\alpha_{144}-\alpha_{342})e^{13}+(\alpha_{244}-\alpha_{343})e^{23},\quad\alpha_{ijk}\in\R.
\end{align}
Consequently the natural map
\[
H^2_{L,\rho}(\mathfrak{a}_3, \mathfrak{a}_3^\ast)
\longrightarrow
H^2_{\rho}(\mathfrak{a}_3, \mathfrak{a}_3^\ast)
\]
is not injective. Let us consider the following change of variables:
\begin{align*}
	\alpha_{143} - \alpha_{242} &= \kappa_1, &
	\alpha_{144} - \alpha_{342} &= \kappa_2, &
	\alpha_{244} - \alpha_{343} &= \kappa_3.
\end{align*}
Moreover, by identifying the dual basis $\{e^1, e^2, e^3, e^4\}$ with $\{e_5, e_6, e_7, e_8\}$, the symplectic form defined by
$\omega(\xi, x) = \xi(x)$ for all $x \in \mathfrak{a}_1$ and $\xi \in \mathfrak{a}_1^\ast$
can be written as
\begin{align}
	\omega_{\kappa_1,\kappa_2,\kappa_3} =\kappa_1e^{12}+\kappa_2e^{13}+\kappa_3e^{23}+ e^{15} + e^{26} + e^{37} + e^{48},\quad\kappa_1,\kappa_2,\kappa_3\in\R.
\end{align}
Since $\kappa_1$, $\kappa_2$, and $\kappa_3$ are parameters of the cohomology group, it follows that for distinct choices
$(\kappa_1,\kappa_2,\kappa_3) \neq (\kappa_1',\kappa_2',\kappa_3')$,
the corresponding symplectic structures
$\omega_{\kappa_1,\kappa_2,\kappa_3}$ and
$\omega_{\kappa_1',\kappa_2',\kappa_3'}$
are not symplectically isomorphic.
This completes the proof.

\end{proof}

\begin{remark}
The values of the parameters indeed determine non-isomorphic cases. 
More precisely, the structure constants of the Lie algebra $($see Table~$\ref{Symbracket})$ and the coefficients of the symplectic structure $($see Table~$\ref{Symform})$ correspond, respectively, to elements of the cohomology groups 
$H^2_\rho(\h, \h^\ast)$ and $H^2_{L,\rho}(\h,\h^\ast)$. 
Therefore, two different sets of parameters give rise to non-isomorphic symplectic Lie algebras.
\end{remark}

\begin{theo}
Let $(G,\omega)$ be a connected, simply connected  symplectic nilpotent real Lie group of dimension $8$ admitting a Lagrangian normal subgroups. Then, $G$ is isomorphic to exactly one of the Lie groups whose Lie algebra appears in Table~$\ref{Symbracket}$. 
\end{theo}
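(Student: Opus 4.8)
The plan is to deduce this group-level statement from the Lie-algebra classification of Proposition~\ref{prSymbracket} by means of the Lie correspondence, so that essentially no new computation is required. Since $G$ is connected and simply connected, Lie's third theorem provides an equivalence between the category of connected simply connected real Lie groups and the category of finite-dimensional real Lie algebras; under it two such groups are isomorphic if and only if their Lie algebras are isomorphic, every finite-dimensional real Lie algebra integrates to a unique (up to isomorphism) connected simply connected Lie group, and $G$ is nilpotent precisely when $\G=\mathrm{Lie}(G)$ is nilpotent. It therefore suffices to show that the datum $(G,\omega,N)$ of an eight-dimensional connected simply connected symplectic nilpotent Lie group with a Lagrangian normal subgroup corresponds, under this equivalence, to the datum $(\G,\omega_e,\mathfrak{j})$ of an eight-dimensional symplectic nilpotent Lie algebra with a Lagrangian ideal, and then to invoke Proposition~\ref{prSymbracket}.

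First I would translate the symplectic structure. Evaluating the left-invariant two-form $\omega$ at the identity $e$ yields a nondegenerate alternating form $\omega_e\in\bigwedge^2\G^\ast$, and left-invariance together with $\mathrm{d}\omega=0$ is equivalent, via the Chevalley--Eilenberg identification of left-invariant forms with $\bigwedge^\bullet\G^\ast$, to the cocycle condition $\mathrm{d}\omega_e=0$ defining a symplectic Lie algebra. Conversely, every symplectic form on $\G$ extends to a unique left-invariant symplectic form on $G$. Hence left-invariant symplectic structures on $G$ correspond bijectively to symplectic structures on $\G$, compatibly with the respective isomorphisms.

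Next I would translate the Lagrangian normal subgroup. As $G$ is simply connected and nilpotent, the exponential map $\exp\colon\G\to G$ is a diffeomorphism, connected Lie subgroups are automatically closed and correspond bijectively to Lie subalgebras, and normal connected subgroups correspond precisely to ideals; a Lagrangian normal subgroup $N$ thus corresponds to the ideal $\mathfrak{j}=\mathrm{Lie}(N)=T_eN\subset\G$. By left-invariance, the Lagrangian condition on $N$ with respect to $\omega$ amounts to $\mathfrak{j}$ being a maximal $\omega_e$-isotropic subspace of $\G$, that is, a Lagrangian ideal in the sense of Section~\ref{sec2}. Consequently $(G,\omega,N)\mapsto(\G,\omega_e,\mathfrak{j})$ is a structure-preserving bijection on isomorphism classes, and applying Proposition~\ref{prSymbracket} shows that every such $\G$ is isomorphic to exactly one of $\G_{8,1},\dots,\G_{8,95}$; by the equivalence of categories $G$ is then isomorphic to the corresponding simply connected Lie group.

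The main obstacle is not computational but lies in pinning the dictionary down precisely. One must fix the convention that the Lagrangian normal subgroup is a closed connected subgroup, so that it corresponds to an ideal rather than merely to a subspace; verify that an abstract Lie-group isomorphism carrying one such triple to another restricts at the Lie-algebra level to an isomorphism taking the one Lagrangian ideal onto the other; and confirm that the list in Table~\ref{Symbracket} consists of pairwise non-isomorphic representatives, which is what allows the uniqueness assertion ``exactly one'' to transfer faithfully from Proposition~\ref{prSymbracket} to the group level.
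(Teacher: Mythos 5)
Your proposal is correct and matches the paper's (implicit) argument: the theorem is stated in the paper as a direct consequence of Proposition~\ref{prSymbracket} via the standard equivalence between connected, simply connected real Lie groups and finite-dimensional real Lie algebras, exactly as you spell out. Your extra care in translating the left-invariant symplectic form to $\omega_e$ and the Lagrangian normal subgroup to a Lagrangian ideal (using that closed connected normal subgroups of a simply connected nilpotent group correspond to ideals) makes explicit precisely the dictionary the paper takes for granted.
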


A class of nilpotent Lie algebras that are particularly important are filiform nilpotent Lie algebras, cf.~\cite[Chapter~2, \S
6.2]{Oni}. We recall that a nilpotent Lie algebra $\G$ is filiform precisely when its nilpotency class $p$ satisfies $p = n - 1$, where $n = \dim \G$. The classification of eight-dimensional symplectic filiform complex Lie algebras, 
along with a description of all their symplectic structures, was established in \cite{GJK}. Let $(\G, \omega)$ be a $2n$-dimensional symplectic filiform Lie algebra. 
Then $(\G, \omega)$ admits a Lagrangian ideal, which is precisely the term 
$\mathcal{C}^{n-1}(\G)$ of its central descending series (see, Theorem 7.2.1, \cite{Bause}). Together with Propositions $\ref{prSymbracket}$ and  $\ref{prSymform}$, we therefore have:

\begin{theo}\label{Filifrom}
Let $(G, \omega)$ be a connected, simply connected symplectic filiform real Lie group of dimension~$8$. Then, $G$ is isomorphic to one of the Lie groups whose Lie algebras and corresponding symplectic forms are given by the following list:
\setcounter{table}{3}	
{\renewcommand*{\arraystretch}{1.5}
\captionof{table}{Eight-dimensional symplectic filiform real Lie algebras.}\label{SympFilifrom}
\begin{small} 
\setlength{\tabcolsep}{10pt} 
\begin{longtable}{@{}lllllllc@{}} 
			\hline
		Algebra&Symplectic form&&&&Remarks\\\hline
$\G_{8,80}^{a,b,c,d,\lambda,\mu,\mu_1}$&$\omega=\kappa_1e^{13}+e^{15}+\kappa_2 e^{23}+e^{26}+e^{37}+e^{48}$&&$(\kappa_1=c,\kappa_2=d-\lambda)\in\R$&&$b(\mu_1-1)(\mu+1)\neq0$\\
$\G_{8,90}^{a,b,c,d,\lambda}$&$\omega=\kappa_1e^{13}+e^{15}+\kappa_2 e^{23}+e^{26}+e^{37}+e^{48}$&&$(\kappa_1=-c,\kappa_2=a-d)\in\R$&&$\lambda\neq0$\\
$\G_{8,91}^{a,b,c,d,\lambda}$&$\omega=e^{15}+\kappa e^{23}+e^{26}+e^{37}+e^{48}$&&$(\kappa=c-\lambda)\in\R$&&$b-d\neq0$\\
$\G_{8,93}^{a,b,c,d,\lambda}$&$\omega=e^{15}+\kappa e^{23}+e^{26}+e^{37}+e^{48}$&&$(\kappa=-d-\frac{8c}{5})\in\R$&&$b\neq0$\\
$\G_{8,95}^{a,b,c,d,\lambda}$&$\omega=e^{15}+\kappa e^{23}+e^{26}+e^{37}+e^{48}$&&$(\kappa=b-d-\frac{1}{5}(3a+8c))\in\R$&&$b\neq0$

\\\hline
\end{longtable}
			\end{small}	
			
			}
		
\end{theo}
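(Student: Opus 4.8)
The plan is to reduce the classification of $8$-dimensional symplectic filiform real Lie algebras to a selection inside the complete list of Table~\ref{Symbracket}, and then to isolate the filiform members by a nilpotency-class computation. First I would invoke Theorem~7.2.1 of~\cite{Bause}, recalled just before the statement: a $2n$-dimensional symplectic filiform Lie algebra $(\G,\omega)$ carries a Lagrangian ideal, namely the term $\mathcal{C}^{n-1}(\G)$ of its descending central series. For $\dim\G=8$ this ideal is $\mathcal{C}^{3}(\G)$, which has dimension $8-3-1=4$ and is therefore Lagrangian. Consequently every $8$-dimensional symplectic filiform real Lie algebra is a symplectic nilpotent Lie algebra with a Lagrangian ideal, and hence occurs, up to isomorphism, in the exhaustive list $\G_{8,1},\dots,\G_{8,95}$ produced by Proposition~\ref{prSymbracket}. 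The problem thus becomes: \emph{which of these ninety-five algebras are filiform, and for which values of their parameters?}

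Next I would characterize filiformity internally to the list. By definition an $n$-dimensional nilpotent Lie algebra is filiform precisely when its nilpotency class equals $n-1$; for $n=8$ this is the condition $\mathcal{C}^{7}(\G)=0$ together with $\mathcal{C}^{6}(\G)\neq 0$. The equivalence is elementary: a strictly descending central series $\G=\mathcal{C}^{0}\supsetneq\mathcal{C}^{1}\supsetneq\cdots\supsetneq\mathcal{C}^{7}=0$ in dimension $8$ has first drop at least $2$ and each later drop at least $1$, so the total drop of $8$ over seven strict inclusions forces $\dim\mathcal{C}^{1}(\G)=6$ and $\dim\mathcal{C}^{i}(\G)=7-i$ thereafter, which is exactly the filiform profile. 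The nilpotency index of each family is recorded in the Remarks column of Table~\ref{Symbracket}; scanning it, exactly five families satisfy $\mathcal{C}^{7}(\G)=0$, while every other family satisfies $\mathcal{C}^{k}(\G)=0$ for some $k\le 6$ and hence has class at most $6$ and cannot be filiform. These five families are $\G_{8,80}$, $\G_{8,90}$, $\G_{8,91}$, $\G_{8,93}$ and $\G_{8,95}$.

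It then remains to pin down, within each of the five families, the parameter values for which the class is exactly $7$ rather than strictly smaller. For this I would compute the descending central series explicitly from the brackets in Table~\ref{Symbracket} and impose $\mathcal{C}^{6}(\G)\neq 0$; in each case the top term is spanned by a single basis vector whose coefficient is a monomial in the parameters, yielding the non-degeneracy conditions $b(\mu_1-1)(\mu+1)\neq 0$ for $\G_{8,80}$, $\lambda\neq 0$ for $\G_{8,90}$, $b-d\neq 0$ for $\G_{8,91}$, and $b\neq 0$ for both $\G_{8,93}$ and $\G_{8,95}$. The associated symplectic forms are then inherited verbatim from Proposition~\ref{prSymform} (Table~\ref{Symform}), and collecting these data produces Table~\ref{SympFilifrom}, which is the content of the theorem.

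The main obstacle is computational rather than conceptual: the selection rests entirely on the correctness of the nilpotency indices tabulated for all ninety-five algebras in Table~\ref{Symbracket}, together with the explicit lower-central-series computation extracting the sharp genericity condition on the parameters of each of the five filiform families. The delicate point is to verify that for the excluded special parameter values the class genuinely drops below $7$, so that no non-filiform degeneration is mistakenly retained, and conversely that no algebra outside these five families can become filiform for any choice of parameters; both require carefully tracking how the top bracket degenerates as the parameters specialize.
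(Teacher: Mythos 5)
Your proposal is correct and matches the paper's own argument: the paper likewise invokes Theorem~7.2.1 of \cite{Bause} to conclude that an eight-dimensional symplectic filiform Lie algebra admits the Lagrangian ideal $\mathcal{C}^{3}(\G)$, hence appears in Tables~\ref{Symbracket} and~\ref{Symform}, and then isolates the five families with $\mathcal{C}^{7}(\G)=0$ together with the parameter conditions forcing $\mathcal{C}^{6}(\G)\neq 0$. Your explicit dimension-count characterization of filiformity and the remark that the excluded degenerations must genuinely drop the nilpotency class are exactly the (implicit) content of the paper's deduction from Propositions~\ref{prSymbracket} and~\ref{prSymform}.
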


\section{Appendix}\label{Appendix}
\subsection{Complete left-invariant affine structures on four-dimensional nilpotent Lie groups}
The following list presents the different classes of
four-dimensional geodesically complete flat Lie algebras; see~\cite{Hosh}.

As is well known, there are three 4-dimensional nilpotent Lie algebras up to
isomorphism (see, for example, \cite{chao} or \cite{dix}). We will denote these as follows.
\begin{enumerate}
	\item $\mathrm{A}\cong\R^4=\langle x_1,x_2,x_3,x_4|-\rangle$;\quad abelian
	\item $\mathrm{H}=\langle x_1,x_2,x_3,x_4|~~[x_2,x_3]=x_1\rangle$;\quad $Heisenberg\oplus\R$,
	\item $\mathrm{T}=\langle x_1,x_2,x_3,x_4|~~[x_2,x_3]=x_1,~~[x_3,x_4]=x_2\rangle$;
\end{enumerate}
		\begin{align*}
	1.\quad e_1\cdot e_4&=e_2, &e_2\cdot e_3&=e_1, &e_2\cdot e_4&=-e_3, &e_3\cdot e_3&=e_2  &&  &(\mathrm{T})&\\
	 e_4\cdot e_1&=e_2 && && && && &&\\
	2.\quad e_1\cdot e_4&=e_2, &e_2\cdot e_3&=e_1, &e_2\cdot e_4&=-e_3, &e_3\cdot e_3&=e_2,  &&  &(\mathrm{T})&\\
	e_4\cdot e_1&=e_2, &e_4\cdot e_4&=e_1 && && && &&\\
	3.\quad e_2\cdot e_2&=e_1, &e_3\cdot e_3&=e_1, &e_4\cdot e_4&=e_1  && &&  &(\mathrm{A})&\\
	4.\quad e_2\cdot e_2&=e_1, &e_3\cdot e_3&=e_1, &e_4\cdot e_4&=-e_1 && &&  &(\mathrm{A})&\\
	5.\quad e_2\cdot e_2&=e_1, &e_2\cdot e_3&=e_1, &e_3\cdot e_2&=-e_1, &e_3\cdot e_4&=e_1,   &&  &(\mathrm{H})&\\
	e_4\cdot e_3&=e_1 && && && && &&\\
	6.\quad e_2\cdot e_3&=e_1, &e_3\cdot e_2&=-e_1,  &e_3\cdot e_4&=e_1, &e_4\cdot e_3&=e_1  &&  &(\mathrm{H})&\\
	7.\quad e_2\cdot e_2&=e_1, &e_2\cdot e_3&=e_1, &e_3\cdot e_2&=-e_1, &e_3\cdot e_3&=t\,e_1,   &t\in\R&  &(\mathrm{H})&\\
	e_4\cdot e_4&=e_1 && && && && &&\\
	8.\quad e_2\cdot e_2&=e_1, &e_2\cdot e_3&=e_1, &e_3\cdot e_3&=t\,e_1, &e_4\cdot e_4&=-e_1,  &t\in\R^+&  &(\mathrm{H})&\\
	9.\quad e_2\cdot e_3&=e_1, &e_3\cdot e_3&=e_2, &e_4\cdot e_4&=e_2 &&  && &(\mathrm{H})&\\
	10.\quad e_2\cdot e_3&=e_1, &e_3\cdot e_3&=e_2, &e_4\cdot e_4&=e_1+e_2 &&  &&  &(\mathrm{H})&\\
	11.\quad e_2\cdot e_3&=e_1, &e_3\cdot e_3&=e_2, &e_3\cdot e_4&=e_1, &e_4\cdot e_4&=t\, e_1+e_2,  &t\in\R^+&  &(\mathrm{H})&&\\
	12.\quad e_2\cdot e_3&=e_1, &e_3\cdot e_3&=e_2, &e_4\cdot e_4&=-e_2, &&  &&  &(\mathrm{H})&\\
	13.\quad e_2\cdot e_3&=e_1, &e_2\cdot e_4&=e_1, &e_3\cdot e_3&=e_2, &e_4\cdot e_4&=-e_2  &&   &(\mathrm{H})&\\
	14.\quad e_2\cdot e_3&=e_1, &e_2\cdot e_4&=e_1, &e_3\cdot e_4&=e_1, &e_3\cdot e_3&=e_2,   &&  &(\mathrm{H})&\\
	e_4\cdot e_4&=-e_2 && && && && &&\\
	15.\quad e_2\cdot e_3&=e_1, &e_2\cdot e_4&=e_1,  &e_3\cdot e_3&=e_2, &e_3\cdot e_4&=e_1,    &&  &(\mathrm{H})&\\
	e_4\cdot e_4&=e_1-e_2  && && && && &&\\
	16.\quad e_2\cdot e_3&=e_1, &e_3\cdot e_3&=e_2, &e_4\cdot e_4&=e_1-e_2  &&  &&  &(\mathrm{H})&\\
	17.\quad e_2\cdot e_3&=e_1, &e_3\cdot e_3&=e_2, &e_3\cdot e_4&=e_1,  &e_4\cdot e_4&=t\,e_1-e_2  &t\in\R&  &(\mathrm{H})&\\
	18.\quad e_2\cdot e_3&=e_1, &e_2\cdot e_4&=e_1, &e_3\cdot e_3&=e_2,  &e_3\cdot e_4&=e_2,   & &  &(\mathrm{T})&\\
	e_4\cdot e_2&=-2\,e_1, &e_4\cdot e_3&=-e_2 &e_4\cdot e_4&=\lambda\,e_2 &&  &\lambda\in\R& &&\\
	19.\quad e_2\cdot e_3&=e_1, &e_2\cdot e_4&=e_1, &e_3\cdot e_3&=e_2,  &e_3\cdot e_4&=e_2, & &  &(\mathrm{T})&\\
	 e_4\cdot e_2&=-2\,e_1, &e_4\cdot e_3&=e_1-e_2, & e_4\cdot e_4&=\lambda\,e_2&&  &\lambda\in\R& &&\\
	20.\quad e_2\cdot e_3&=e_1, &e_2\cdot e_4&=e_1, &e_3\cdot e_3&=e_1+e_2,  &e_3\cdot e_4&=e_2,   & &  &(\mathrm{T})&\\
	e_4\cdot e_2&=-2\,e_1, &e_4\cdot e_3&=t\,e_1-e_2, &	e_4\cdot e_4&=\lambda\,e_2 && &\lambda,t\in\R& &&\\
	21.\quad e_2\cdot e_3&=e_1, &e_3\cdot e_2&=-2\,e_1, &e_3\cdot e_4&=e_2,  &e_4\cdot e_3&=-e_2  & &  &(\mathrm{T})&\\
	22.\quad e_2\cdot e_3&=e_1, &e_3\cdot e_2&=-2\,e_1, &e_3\cdot e_4&=e_2,  &e_4\cdot e_3&=-e_2,   & &  &(\mathrm{T})&\\
	e_4\cdot e_4&=e_1 && && && && &&\\
	23.\quad e_2\cdot e_3&=e_1, &e_2\cdot e_4&=e_1, &e_3\cdot e_2&=-2\,e_1,  &e_3\cdot e_4&=e_2,   & &  &(\mathrm{T})&\\
	e_4\cdot e_2&=-2\,e_1, &e_4\cdot e_3&=-e_2, &e_4\cdot e_4&=e_1 && && && \\
	24.\quad e_2\cdot e_3&=e_1, &e_4\cdot e_4&=e_2,  && && & &  &(\mathrm{H})&\\
	25.\quad e_2\cdot e_3&=e_1, &e_3\cdot e_4&=e_1, &e_4\cdot e_4&=e_2 &&   & &  &(\mathrm{H})&\\
	26.\quad e_2\cdot e_3&=e_1, &e_4\cdot e_3&=e_1, &e_4\cdot e_4&=e_2 &&   & &  &(\mathrm{H})&\\
	27.\quad e_2\cdot e_3&=e_1, &e_3\cdot e_4&=e_1, &e_4\cdot e_3&=e_1, &e_4\cdot e_4&=e_2  & &  &(\mathrm{H})&\\
	28.\quad e_2\cdot e_4&=e_1, &e_3\cdot e_3&=e_1, &e_4\cdot e_4&=e_2 &&   & &  &(\mathrm{H})&\\
	29.\quad e_2\cdot e_4&=e_1, &e_4\cdot e_3&=e_1, &e_4\cdot e_4&=e_2 &&  & &  &(\mathrm{H})&\\
	30.\quad e_2\cdot e_3&=t\,e_1, &e_3\cdot e_3&=e_1, &e_4\cdot e_2&=e_1, &e_4\cdot e_4&=e_2   &t\in\R &  &(\mathrm{H} ~\text{if}~ t=1)&\\
	& && && && && &(\mathrm{T} ~\text{if}~ t\neq1)&\\
	31.\quad e_2\cdot e_3&=t\,e_1, &e_3\cdot e_4&=e_1, &e_4\cdot e_2&=e_1, &e_4\cdot e_4&=e_2   &t\in\R &  &(\mathrm{H})&\\
	32.\quad e_3\cdot e_2&=e_1, &e_3\cdot e_4&=e_2, &e_4\cdot e_4&=e_3 &&   & &  &(\mathrm{T})&
\end{align*}
\begin{align*}
		33.\quad e_3\cdot e_2&=e_1, &e_3\cdot e_4&=e_2, &e_4\cdot e_3&=e_1, &e_4\cdot e_4&=e_3   & &  &(\mathrm{T})&\\
		34.\quad e_3\cdot e_2&=e_1, &e_3\cdot e_4&=e_2, &e_4\cdot e_3&=-e_1, &e_4\cdot e_4&=e_3  & &  &(\mathrm{T})&\\
	35.\quad e_2\cdot e_4&=-e_1, &e_3\cdot e_4&=e_2, &e_4\cdot e_2&=e_1, &e_4\cdot e_4&=e_3  & &  &(\mathrm{T})&\\
	36.\quad e_2\cdot e_4&=2\,e_1, &e_3\cdot e_3&=e_1, &e_3\cdot e_4&=e_2, &e_4\cdot e_2&=-e_1,   & &  &(\mathrm{T})&\\
	e_4\cdot e_3&=e_1, &e_4\cdot e_4&=e_3 && && && &&\\
		37.\quad e_2\cdot e_4&=(1-t)e_1, &e_3\cdot e_3&=e_1, &e_3\cdot e_4&=e_2, &e_4\cdot e_2&=t\,e_1,    &t\in\R &  &(\mathrm{H}~\text{if}~t=\tfrac{1}{2})&\\
		e_4\cdot e_4&=e_3 && && && && &(\mathrm{T}~\text{if}~t\neq\tfrac{1}{2})&\\
		38.\quad e_2\cdot e_4&=e_1, &e_3\cdot e_2&=e_1, &e_3\cdot e_3&=e_1, &e_3\cdot e_4&=e_2   &t\in\R &  &(\mathrm{T})&\\
		  e_4\cdot e_3&=t\,e_1&e_4\cdot e_4&=e_3 &&  && && && && 	\\
		  \\
		  39.\quad e_3\cdot e_3&=e_1, &e_3\cdot e_4&=e_2, &e_4\cdot e_2&=e_1, &e_4\cdot e_3&=\mu\,e_2   &\mu\in\R &  &&\\
		  e_4\cdot e_4&=e_3 &\text{Cases}& &\mu=1&:(\mathrm{H})  &\mu\neq1&:(\mathrm{T}) && &&\\
		  40.\quad e_2\cdot e_4&=e_1, &e_3\cdot e_3&=\tfrac{1+\mu}{2} e_1, &e_3\cdot e_4&=e_2, &e_4\cdot e_2&=\tfrac{3\mu-1}{2} e_1   &\mu\in\R &  &&\\
		  e_4\cdot e_3&=e_1+\mu\,e_2, &e_4\cdot e_4&=e_3 &\text{Cases}& &\mu=1&:(\mathrm{H})  &\mu\neq1&:(\mathrm{T}) &&\\
		   41.\quad e_2\cdot e_4&=e_1, &e_3\cdot e_3&=t\,e_1, &e_3\cdot e_4&=e_2, &e_4\cdot e_2&=(t-1+\mu) e_1   &\mu,t\in\R &  &&\\
		   e_4\cdot e_3&=\mu\,e_2, &e_4\cdot e_4&=e_3 &t&\neq\tfrac{1+\mu}{2} && && &&\\
		   \text{Cases}& &\mu=1& &(\mathrm{A})& ~\text{if}~ t=1, &(\mathrm{H})& ~\text{if}~ t\neq 1 && &&\\
		   & &\mu\neq 1& &(\mathrm{H})& ~\text{if}~ t=2-\mu, &(\mathrm{T})& ~\text{if}~ t\neq 1 && &&\\
		   42.\quad e_4\cdot e_2&=e_1, &e_4\cdot e_3&=e_2, &e_4\cdot e_4&=e_3 && &&  &(\mathrm{T})&\\
		   43.\quad e_3\cdot e_4&=e_1, &e_4\cdot e_2&=e_1, &e_4\cdot e_3&=e_2, &e_4\cdot e_4&=e_3  &&  &(\mathrm{T})&\\
		   44.\quad e_2\cdot e_4&=e_1, &e_3\cdot e_3&=-e_1, &e_4\cdot e_2&=t\,e_1, &e_4\cdot e_3&=e_2  &t\in\R&  &(\mathrm{H}~\text{if}~t=1)&\\
		   e_4\cdot e_4&=e_3 && && && && &(\mathrm{T}~\text{if}~t\neq 1)&\\
		   45.\quad e_2\cdot e_4&=e_1, &e_3\cdot e_3&=-e_1, &e_3\cdot e_4&=e_1, &e_4\cdot e_2&=t\,e_1  &t\in\R&  &(\mathrm{H}~\text{if}~t=1)&\\
		   e_4\cdot e_3&=e_2 &e_4\cdot e_4&=e_3 && && && &(\mathrm{T}~\text{if}~t\neq 1)&\\
		   46.\quad e_3\cdot e_4&=e_2, &e_4\cdot e_3&=-e_2 && &&  &&  &(\mathrm{H})&\\
		   47.\quad e_3\cdot e_3&=e_1, &e_3\cdot e_4&=e_1, &e_4\cdot e_3&=-e_1 &&  &&  &(\mathrm{H})&\\
		   48.\quad e_3\cdot e_3&=e_1, &e_3\cdot e_4&=e_2, &e_4\cdot e_3&=-e_2 &&  &&  &(\mathrm{H})&\\
		   49.\quad e_3\cdot e_3&=e_1, &e_3\cdot e_4&=e_2, &e_4\cdot e_3&=-e_2 &e_4\cdot e_4&=e_1  &&  &(\mathrm{H})&\\
		   50.\quad e_3\cdot e_3&=e_1, &e_3\cdot e_4&=e_2, &e_4\cdot e_3&=-e_2 &e_4\cdot e_4&=-e_1  &&  &(\mathrm{H})&\\
		   51.\quad e_3\cdot e_3&=e_1, &e_3\cdot e_4&=t\,e_1, &e_4\cdot e_3&=-t\,e_1 &e_4\cdot e_4&=e_1  &t\in\R^+&  &(\mathrm{A}~\text{if}~t=0)&\\
		   & && && && && &(\mathrm{H}~\text{if}~t>0)&\\
		   52.\quad e_3\cdot e_3&=e_1, &e_3\cdot e_4&=t\,e_1, &e_4\cdot e_3&=-t\,e_1 &e_4\cdot e_4&=-e_1  &t\in\R^+&  &(\mathrm{A}~\text{if}~t=0)&\\
		   & && && && && &(\mathrm{H}~\text{if}~t>0)&\\
		   53.\quad e_3\cdot e_3&=e_1, &e_3\cdot e_4&=(1+t)e_2, &e_4\cdot e_3&=(1-t)e_2 &e_4\cdot e_4&=e_1  &t\in\R^+&  &(\mathrm{A}~\text{if}~t=0)&\\
		   & && && && && &(\mathrm{H}~\text{if}~t>0)&\\
		    54.\quad e_3\cdot e_3&=e_1, &e_4\cdot e_4&=e_2 && &&  &&  &(\mathrm{A})&\\
		    55.\quad e_3\cdot e_3&=e_1, &e_3\cdot e_4&=e_1+t\,e_2 &e_4\cdot e_3&=-e_1-t\,e_2, &e_4\cdot e_4&=e_2  &t\in\R&  &(\mathrm{H})&\\
		    56.\quad e_3\cdot e_3&=e_1, &e_3\cdot e_4&=e_1+e_2, &e_4\cdot e_3&=-e_1+e_2 && &&  &(\mathrm{H})&\\
		    57.\quad e_3\cdot e_3&=e_1, &e_3\cdot e_4&=(1+t)e_2, &e_4\cdot e_3&=(1-t)e_2&& &t\in\R^+&  &(\mathrm{A}~\text{if}~t=0)&\\
		    & && && && && &(\mathrm{H}~\text{if}~t>0)&\\
		    58.\quad e_4\cdot e_3&=e_1, &e_4\cdot e_4&=e_3 && && &&  &(\mathrm{H})&\\
		     59.\quad e_3\cdot e_4&=e_1, &e_4\cdot e_3&=e_2, &e_4\cdot e_4&=e_3 && &&  &(\mathrm{H})&\\
		     60.\quad e_3\cdot e_4&=e_1, &e_4\cdot e_3&=te_1, &e_4\cdot e_4&=e_3 && &t\in\R&  &(\mathrm{A}~\text{if}~t=1)&\\
		     & && && && && &(\mathrm{H}~\text{if}~t\neq1)&
	\end{align*}
	\begin{align*}
			61.\quad e_4\cdot e_4&=e_1, && && &   & &  &(\mathrm{A})&\\
			62.\quad e_i\cdot e_j&=0 &\text{for all}~i,j=1,2,3,4& && &   & &  &(\mathrm{A})&\\
	\end{align*}
\subsection{Identification of Isomorphic Flat Lie Algebras}
We now proceed to eliminate those Lie algebras that are isomorphic, based on the list provided above. As noted above Definition~\ref{Defofnabla}, the flat torsion-free connections labeled 
$\mathbf{15}$, $\mathbf{23}$, $\mathbf{26}$, and $\mathbf{25}$ in \textsc{\cite{Hosh}} 
are, in fact, isomorphic to those represented by $\mathbf{13}$, $\mathbf{22}$, 
$\mathbf{24}$, and $\mathbf{24}$, respectively. 
The remaining connections are not isomorphic under the equivalence relation~$(\ref{relationequiva})$.

We now exhibit, for each case, the explicit Lie algebra isomorphism. We now examine the flat Lie algebra labeledy $\mathbf{13}$, whose associated flat torsion-free connection is given by
\begin{align}
	\nabla_{e_2}e_3&=e_1,\quad\nabla_{e_2}e_4=e_1,\quad\nabla_{e_3}e_3=e_2,\quad\nabla_{e_4}e_4=-e_2.
\end{align}
The underlying Lie algebra induced by this connection has the following nonvanishing Lie brackets:
\begin{align}
	\mathrm{H}_{13} : [e_2,e_3]=e_1,\quad [e_2,e_4]=e_1.
\end{align}
It is clear that the Lie algebra $\mathrm{H}_{13}$ is isomorphic to the Heisenberg Lie algebra $\ell$. 
One easily verifies that the map
\[
\Psi : \mathrm{H}_{13} \longrightarrow \ell
\]
defined by
\[
e_1 \mapsto -e_3, \quad e_2 \mapsto -e_1, \quad e_3 \mapsto  e_2-e_4, \quad e_4 \mapsto e_2
\]
is a Lie algebra isomorphism. It therefore remains to recover the corresponding flat torsion-free connection on $\ell$ by applying Definition~\ref{Defofnabla}. 
We then obtain
\begin{align}\label{H12}
	\nabla_{e_1} e_2 &= e_3, & \nabla_{e_2} e_2 &= e_1, & \nabla_{e_2} e_4 &= e_1, & \nabla_{e_4} e_2 &= e_1.
\end{align}
This flat torsion-free connection is listed in Table~\ref{4flatcomplete} under the algebra $\ell_9$. Consider now the flat torsion-free connection labeled by $\mathrm{H}_{15}$, which is given by
\begin{align}
	\nabla_{e_2} e_3 &= e_1, \quad \nabla_{e_2} e_4 = e_1,\quad\nabla_{e_3}e_3=e_2,\quad \nabla_{e_3} e_4=e_1,\quad\nabla_{e_4}e_4=e_1-e_2.
\end{align}
The underlying Lie algebra associated with this connection has the following nonvanishing bracket:
\begin{align}
	\mathrm{H}_{15} : [e_2,e_3] = e_1,\quad [e_2,e_4] = e_1,\quad [e_3,e_4] = e_1.
\end{align}  
It is straightforward to see that $\mathrm{H}_{15}$ is isomorphic to the Heisenberg Lie algebra $\ell$.  
Indeed, the map
\[
\Psi : \mathrm{H}_{15} \longrightarrow \ell
\]
defined by
\[
e_1 \mapsto e_3, \quad e_2 \mapsto e_1+e_3, \quad e_3 \mapsto  e_1+e_2+e_3, \quad e_4 \mapsto e_2-e_4
\]
provides a Lie algebra isomorphism.  
It therefore remains to determine the corresponding flat torsion-free connection on $\ell$ via Definition~\ref{Defofnabla}. We then obtain
\begin{align}\label{H15}
	\nabla_{e_1} e_2 &= e_3, & \nabla_{e_2} e_2 &= e_1, & \nabla_{e_2} e_4 &= e_1, & \nabla_{e_4} e_2 &= e_1.
\end{align}
which, in fact, coincides with the connection defined in (\ref{H12}).

Consider the flat Lie algebra labeled~$22$ (see the list above). Its associated flat, torsion-free connection is given by
\begin{align}\label{H22}
	\nabla_{e_2} e_3 &= e_1, & \nabla_{e_3} e_2 &= -2\,e_1, & \nabla_{e_3} e_4 &= e_2, & \nabla_{e_4} e_3 &= -e_2, & \nabla_{e_4} e_4 &= e_1.
\end{align}
The underlying Lie algebra associated with this connection has the following nonvanishing Lie brackets:
\begin{align}
	\mathrm{H}_{22} :\quad 
	[e_2,e_3] = 3\,e_1,\qquad 
	[e_3,e_4] = 2\,e_1.
\end{align}
It is straightforward to verify that $\mathrm{H}_{22}$ is isomorphic to the  Lie algebra~$\mathfrak{t}$.
Indeed, the linear map
\[
\Psi : \mathrm{H}_{22} \longrightarrow \mathfrak{t}
\]
defined by
\[
e_1 \mapsto \tfrac{1}{36} e_3, \qquad
e_2 \mapsto -\tfrac{1}{12}  e_2, \qquad
e_3 \mapsto e_4, \qquad
e_4 \mapsto -\tfrac{1}{6}  e_1
\]
is a Lie algebra isomorphism.  It therefore remains to recover the corresponding flat, torsion-free connection on~$\mathfrak{t}$ by applying Definition~\ref{Defofnabla}.
We then obtain
\begin{align}\label{H22}
	\nabla_{e_1} e_1 &= e_3, &
	\nabla_{e_1} e_4 &=-\tfrac{1}{2} e_2, &
	\nabla_{e_2} e_4 &= -\tfrac{1}{2} e_3, &
	\nabla_{e_4} e_1 &= \tfrac{1}{2} e_2,&
	\nabla_{e_4} e_2 &= \tfrac{2}{3} e_3.&
\end{align}
This flat, torsion-free connection is listed in Table~\ref{4flatcomplete} under the Lie algebra~$\mathfrak{t}_7$.

Consider now the flat, torsion-free connection labeled by~$23$, which is given by
\begin{align}
	\nabla_{e_2} e_3 &= e_1, \quad
	\nabla_{e_2} e_4 = e_1, \quad
	\nabla_{e_3} e_2 =-2\,e_1, \quad
	\nabla_{e_3} e_4 = e_2, \quad
	\nabla_{e_4} e_2 =-2\,e_1\quad
	\nabla_{e_4} e_3 =-e_2\quad
	\nabla_{e_4} e_4 =e_1.
\end{align}
The underlying Lie algebra associated with this connection has the following nonvanishing Lie brackets:
\begin{align}
	\mathrm{H}_{23} :\quad
	[e_2,e_3] = 3\,e_1, \qquad
	[e_2,e_4] = 3\,e_1, \qquad
	[e_3,e_4] = 2\,e_2.
\end{align}
One readily verifies that the Lie algebra $\mathrm{H}_{23}$ is isomorphic to the  Lie algebra~$\mathfrak{t}$.
Indeed, the linear map
\[
\Psi : \mathrm{H}_{23} \longrightarrow \mathfrak{t}
\]
defined by
\[
e_1 \mapsto \tfrac{1}{36} e_3, \qquad
e_2 \mapsto -\tfrac{1}{12}  e_2, \qquad
e_3 \mapsto e_4, \qquad
e_4 \mapsto -\tfrac{1}{6}  e_1+e_4
\]
is a Lie algebra isomorphism.
Applying Definition~\ref{Defofnabla}, we recover the induced flat, torsion-free connection on~$\mathfrak{t}$, which is given by
\begin{align}
	\nabla_{e_1} e_1 &= e_3, &
	\nabla_{e_1} e_4 &=-\tfrac{1}{2} e_2, &
	\nabla_{e_2} e_4 &= -\tfrac{1}{2} e_3, &
	\nabla_{e_4} e_1 &= \tfrac{1}{2} e_2,&
	\nabla_{e_4} e_2 &= \tfrac{2}{3} e_3.&
\end{align}
This connection coincides with the one previously defined in~\eqref{H22}.

Let us consider the flat Lie algebra labeled~$24$, whose associated flat, torsion-free connection is given by

\begin{align}
	\nabla_{e_2} e_3 &= e_1, \qquad
	\nabla_{e_4} e_4 =e_1. 
\end{align}
The underlying Lie algebra corresponding to this connection has the following nonvanishing Lie brackets:
\begin{align}
	\mathrm{H}_{24} :\quad 
	[e_2,e_3] = e_1.
\end{align}
One readily verifies that the Lie algebra $\mathrm{H}_{24}$ is isomorphic to the Heisenberg Lie algebra~$\ell$. Indeed, the linear map
\[
\Psi : \mathrm{H}_{24} \longrightarrow \ell
\]
defined by
\[
e_1 \mapsto  e_3, \qquad
e_2 \mapsto e_1, \qquad
e_3 \mapsto e_2, \qquad
e_4 \mapsto e_4
\]
is a Lie algebra isomorphism. It therefore remains to recover the corresponding flat, torsion-free connection on~$\ell$ by applying Definition~\ref{Defofnabla}.
This yields
\begin{align}\label{H24}
	\nabla_{e_1} e_2 &= e_3,\qquad
	\nabla_{e_4} e_4 = e_1.
\end{align}
This flat, torsion-free connection is listed in Table~\ref{4flatcomplete} under the Lie algebra~$\ell_{13}$.  Consider now the flat, torsion-free connection labeled~$26$, which is defined by
\begin{align}\label{H26}
	\nabla_{e_2} e_3 &= e_1,\qquad
	\nabla_{e_4} e_3 = e_1,\qquad \nabla_{e_4} e_4 = e_2.
\end{align}
The underlying Lie algebra corresponding to this connection has the following nonvanishing Lie brackets:
\begin{align}
	\mathrm{H}_{26} :\quad
	[e_2,e_3] = e_1, \qquad
	[e_3,e_4] = -e_1.
\end{align}
One readily verifies that the Lie algebra $\mathrm{H}_{26}$ is isomorphic to the Lie algebra~$\ell$.
Indeed, the linear map
\[
\Psi : \mathrm{H}_{26} \longrightarrow \ell
\]
defined by
\[
e_1 \mapsto  e_3, \qquad
e_2 \mapsto e_1, \qquad
e_3 \mapsto e_2, \qquad
e_4 \mapsto e_1+e_4
\]
provides a Lie algebra isomorphism.
Applying Definition~\ref{Defofnabla}, we recover the induced flat, torsion-free connection on~$\ell$, which is given by
\begin{align}
	\nabla_{e_1} e_2 &= e_3,\qquad
	\nabla_{e_4} e_4 = e_2.
\end{align}
This connection coincides with the one previously defined in~\eqref{H24}.

Consider now the  flat Lie algebra labeled~$25$, whose associated flat, torsion-free connection is given by
\begin{align}
	\nabla_{e_2} e_3 &= e_1,\qquad
	\nabla_{e_4} e_3 = e_1,\qquad 	\nabla_{e_4} e_4 = e_2.
\end{align}
The underlying Lie algebra corresponding to this connection has the following nonvanishing Lie brackets:
\begin{align}
	\mathrm{H}_{25} :\quad
	[e_2,e_3] = e_1, \qquad
	[e_3,e_4]=-e_1.
\end{align}
One easily checks that the Lie algebra $\mathrm{H}_{25}$ is isomorphic to the Heisenberg Lie algebra~$\ell$. 
Specifically, the linear map
\[
\Psi : \mathrm{H}_{25} \longrightarrow \ell
\]
given by
\[
e_1 \mapsto e_3, \qquad
e_2 \mapsto e_1, \qquad
e_3 \mapsto e_2, \qquad
e_4 \mapsto e_1+e_4
\]
provides an isomorphism of Lie algebras. 
Applying Definition~\ref{Defofnabla}, we then recover the induced flat, torsion-free connection on~$\ell$:
\begin{align}\label{H25}
	\nabla_{e_1} e_2 &= e_3, \qquad
	\nabla_{e_4} e_4 = e_1.
\end{align}
This connection coincide with the one given in \eqref{H24}.

In summary, after eliminating redundancies, we have removed the following: $\mathbf{15}$, which is isomorphic to $\mathbf{13}$; $\mathbf{23}$, which is isomorphic to $\mathbf{22}$; $\mathbf{26}$, which is isomorphic to $\mathbf{24}$; and $\mathbf{25}$, which is also isomorphic to $\mathbf{24}$. The remaining flat Lie algebras are pairwise non-isomorphic. 

After identifying the underlying Lie algebras of the flat Lie algebras listed above, it is straightforward to check that they are not isomorphic by using the automorphism groups described in the next subsection. 
Specifically, two flat, torsion-free connections on a Lie algebra (for example, the Heisenberg Lie algebra~$\ell$) are isomorphic if and only if there exists a Lie algebra automorphism
\[
\Psi : \ell \longrightarrow \ell
\] 
such that
\begin{align}\label{relationequiva}
	\nabla_x^2 = \Psi \circ \nabla^1_{\Psi^{-1}(x)} \circ \Psi^{-1}, \quad \text{for all } x \in \ell.
\end{align}

This identification is very simple: instead of directly searching for isomorphisms between Lie algebras, one can work with the automorphism group, which is easy to determine. 
As a consequence, we obtain a complete classification of geodesically complete, flat Lie algebras of dimension~4. 
The remaining work consists merely of a cohomological computation to obtain the classification of eight-dimensional symplectic Lie algebras with Lagrangian normal subgroups, via Lagrangian extensions of flat Lie algebras.

\subsection{Automorphism Groups of Four-Dimensional Nilpotent Lie Groups}

\begin{align*}
	\mathrm{Aut}(\mathfrak{a})
	= \{ M \in \mathcal{M}_n(\mathbb{R}) \mid \det(M) \neq 0 \}
	= \mathrm{GL}_n(\mathbb{R}).
\end{align*}

\begin{align*}
	\mathrm{Aut}(\ell)
	= \left\{
	\begin{pmatrix}
		a_{11} & a_{12} &0 &0\\
		a_{21} & a_{22} &0&0 \\
		a_{31} & a_{32} & a_{11}a_{22}-a_{12}a_{21} & a_{34} \\
		a_{41} & a_{42} & 0& a_{44}
	\end{pmatrix}
	\;\middle|\;
	a_{44}(a_{11}a_{22}-a_{12}a_{21} )\neq0
	\right\}.
\end{align*}

\begin{align*}
	\mathrm{Aut}(\mathfrak{t})
	= \left\{
	\begin{pmatrix}
		a_{11} & 0 &0 & a_{14} \\
		a_{21} & a_{11}a_{44} & 0 & a_{24} \\
		a_{31} & a_{21}a_{44} & a_{11}a_{44}^2 & a_{34} \\
		0 &0& 0& a_{44}
	\end{pmatrix}
	\;\middle|\;
	a_{11}a_{44}\neq0
	\right\}.
\end{align*}

\addcontentsline{toc}{chapter}{Bibliography}

\end{document}